\newtheorem{theorem}{Theorem}
\newtheorem{lemma}[theorem]{Lemma}
\newtheorem{corollary}[theorem]{Corollary}
\newtheorem{proposition}[theorem]{Proposition}
\theoremstyle{definition}
\newtheorem{example}[theorem]{Example}
\newtheorem{definition}[theorem]{Definition}
\newtheorem{definition-lemma}[theorem]{Definition-Lemma}
\newtheorem{definition-theorem}[theorem]{Definition-Theorem}
\newtheorem{assumption}[theorem]{Assumption}
\newtheorem{remark}[theorem]{Remark}
\newtheorem{convention}[theorem]{Convention}
\newtheorem*{g_convention}{Convention}
\newtheorem*{ack}{Acknowledgements}
\newcommand{\ov}{\overline}
\newcommand{\Hom}{\mathrm{Hom}}
\newcommand{\Hoch}{\mathrm{CH}}
\newcommand{\Cycl}{\mathrm{CC}}
\newcommand{\hdot}{\;\raisebox{3.2pt}{\text{\circle*{2.5}}}}
\def\ldb{\mathopen{\{\!\!\{}}
\def\rdb{\mathclose{\}\!\!\}}}
\def\ldbg{\mathopen{\bigl\{\!\!\bigl\{}}
\def\rdbg{\mathclose{\bigr\}\!\!\bigr\}}}
\begin{document}

\title{A Double Poisson Algebra Structure on Fukaya Categories}

\author[1]{Xiaojun Chen\thanks{Email: xjchen@scu.edu.cn}}
\author[2]{Hai-Long Her\thanks{Email: hailongher@126.com}}
\author[3,4]{Shanzhong Sun\thanks{Email: sunsz@cnu.edu.cn}}
\author[1]{Xiangdong Yang\thanks{Email: xiangdongyang2009@gmail.com}}

\renewcommand\Affilfont{\small}

\affil[1]{Department of Mathematics, Sichuan University, Chengdu 610064 P. R. China}
\affil[2]{School of Mathematical Sciences, Nanjing Normal University, Nanjing 210023 P. R. China}
\affil[3]{Department of Mathematics, Capital Normal University, Beijing 100048 P. R. China}
\affil[4]{Beijing Center for Mathematics and Information Interdisciplinary Sciences, Beijing 100048 P. R. China}

\date{}

\maketitle

\begin{abstract}
Let $M$ be an exact symplectic manifold with $c_1(M)=0$. Denote by $\mathrm{Fuk}(M)$ the Fukaya
category of $M$. We show that the dual space of the bar construction
of $\mathrm{Fuk}(M)$
has a differential graded noncommutative Poisson structure. As a corollary we get a Lie algebra structure on
the cyclic cohomology $\mathrm{HC}^\bullet(\mathrm{Fuk}(M))$,
which is analogous to the ones
discovered by Kontsevich in noncommutative symplectic geometry and by Chas and Sullivan in string topology.
\end{abstract}

\setcounter{tocdepth}{2}\tableofcontents

\section{Introduction}
\renewcommand{\thetheorem}{\Alph{theorem}}

In this paper we construct a noncommutative Poisson structure on the Fukaya category of
an exact symplectic manifold with vanishing first Chern class. Our motivation comes from the noncommutative
symplectic geometry (\cite{Ko1,Ko2,Gin,BL,CBEG}),
noncommutative Poisson geometry (\cite{VdB,CB,BCER}) and
string topology (\cite{CS1,CS2}).
Let us start with some backgrounds.

Roughly speaking, Fukaya category is an algebraic structure arising in the study of symplectic manifolds, where
the objects are Lagrangian submanifolds and the morphisms
are Lagrangian intersection Floer cochain complexes. As observed by Fukaya (\cite{Fuk93}),
the composition of two morphisms
is not associative, but associative up to homotopy. There are homotopy of homotopies,
and homotopy of homotopies of homotopies, etc.,
forming an A$_\infty$ category, a categorical
generalization of Stasheff's A$_\infty$ algebra.

Ever since its first appearance,
Fukaya category has been a fast
developing topic, and is active in, to name a few, symplectic geometry,
homological and homotopical algebra, noncommutative geometry and mathematical physics.
It is one of the noncommutative {\it symplectic} spaces in Kontsevich's homological mirror symmetry program.
In fact, Kontsevich (\cite{HMS}) and also Costello (\cite{Cos07})
conjectured that the Fukaya category of a Calabi-Yau manifold
is a Calabi-Yau A$_\infty$ category, which
means there is a non-degenerate symmetric bilinear pairing of degree $d$ on the morphism spaces
$$
\langle-,-\rangle: \mathrm{Hom}(A,B)\otimes\mathrm{Hom}(B,A)\to k,
$$
for any objects $A$ and $B$, where $k$ is the ground field
of characteristic zero, such that it is cyclically invariant
\begin{equation}\label{CY_cond}
\langle m_n(a_0,\cdots, a_{n-1}), a_n\rangle=(-1)^{n+|a_0|(|a_1|+\cdots+
|a_n|)}\langle m_n(a_1,\cdots, a_n),a_0\rangle.\tag{$\ast$}
\end{equation}
And the famous homological mirror symmetry conjecture of Kontsevich says that,
the (derived category of the) Fukaya category of a Calabi-Yau manifold should be equivalent,
as Calabi-Yau categories, to the
(derived category of) coherent sheaves of its mirror, and vice versa.

In general, it is very difficult to obtain a non-degenerate pairing on a Fukaya category; some
partial results can be found in Fukaya \cite{Fuk10}.
On the other hand, being Calabi-Yau is very important for Fukaya categories, as they would then have
very nice algebraic and geometric properties (see, for example, Kontsevich-Soibelman \cite{KS06}
and Costello \cite{Cos07}).

One nice property of a Calabi-Yau category is a Lie algebra structure on its cyclic cohomology (as shall be
recalled in later sections),
which is nowadays also called the {\it Kontsevich bracket}, and has found many applications in
noncommutative symplectic/Poisson geometry, representation theory of quiver algebras and Calabi-Yau algebras.
Since this Lie algebra is a main motivation of our study, we would like to say some more words about it.

In two very influential papers \cite{Ko1,Ko2}, Kontsevich first raised his theory of noncommutative symplectic
geometry. In particular, he showed that for a noncommutative symplectic space, the noncommutative 0-forms possess
a Lie algebra structure, whose homology is intimately related to the homology of some corresponding moduli space.
His result was later further studied and developed by Ginzburg in \cite{Gin} and Bocklandt-Le Bruyn in \cite{BL}.
These authors proved that the closed path of a doubled quiver has a Lie algebra structure (the Kontsevich bracket),
which is naturally mapped to the Lie algebra of Hamiltonian functions on the corresponding quiver varieties.
Kontsevich's Lie algebra is first considered by Van den Bergh in \cite{VdB} from the
noncommutative Poisson geometry point of view. The relationship between noncommutative symplectic
and noncommutative Poisson structures is also discussed in \cite[Appendix]{VdB}.

In fact, what Van den Bergh introduced is, for a general associative algebra $A$,
the notion of a \textit{double Poisson bracket}.
If the algebra $A$ possesses a double Poisson bracket, then he showed that the commutator quotient space
$A_\natural:=A/[A,A]$ has a Lie algebra structure, where Kontsevich's Lie algebra is a special case when $A$ is
the path algebra of a doubled quiver.
It turns out that Van den Bergh's double Poisson algebra is a very important case of Crawley-Boevey's
noncommutative Poisson structures (\cite{CB}).
The study of Crawley-Boevey was motivated by his trying to find the \textit{weakest}
condition for an associative algebra $A$ such that the moduli space of representations (representation scheme) of $A$ admits
a Poisson structure. If such a condition is fulfilled, we say $A$ possesses a \textit{noncommutative Poisson structure}.
This idea fits very well to a guiding principle proposed by Kontsevich and Rosenberg (\cite{KR}), namely,
for a noncommutative space, any meaningful noncommutative geometric
structure (such as noncommutative symplectic and Poisson)
should induce its classical counterpart on its moduli space of representations.

Now, let us go back to Fukaya categories.
As we have said, it is in general very difficult to prove that a Fukaya category is indeed a Calabi-Yau category.
Nevertheless, we found that Kontsevich's Lie algebra does not {\it a priori} assume the existence
of a non-degenerate pairing, but cyclic invariance (in an appropriate sense) is essential.
This is exactly the case of Fukaya categories, where the counting of the pseudo-holomorphic disks is
cyclically invariant.
That is to say, there is a natural Lie algebra structure on the cyclic cohomology of a Fukaya category,
and such a Lie algebra is
a consequence of the noncommutative Poisson structure (in the sense of Van den Bergh)
on the Fukaya category, when viewing it as a noncommutative space.
The following is our main theorem:

\begin{theorem}[Theorem \ref{main_thm2}]\label{main_thm}
Let $M$ be an exact symplectic $2d$-manifold with $c_1(M)=0$
and possibly with contact type boundary. Denote by $\mathrm{Fuk}(M)$ the Fukaya category
of $M$. Then the dual space of the bar construction of
$\mathrm{Fuk}(M)$
has a degree $2-d$ differential graded
double Poisson algebra structure in the sense of Van den Bergh.
\end{theorem}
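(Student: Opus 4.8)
The plan is to build an explicit degree $2-d$ double bracket on the dual bar construction and then verify Van den Bergh's axioms in turn. Writing $\A=\mathrm{Fuk}(M)$, I would first fix the algebraic setting: the bar construction $B\A=\bigoplus_{n\ge 1}\A[1]^{\otimes n}$, formed over composable strings of morphisms, is a differential graded coalgebra whose codifferential $b$ assembles all of the $A_\infty$ operations $m_n$, and dualizing the deconcatenation coproduct turns $R:=(B\A)^{*}$ into a differential graded associative algebra under concatenation, with differential $b^{*}$. This $R$ is the associative algebra carrying the sought structure, so the concrete goal is a bilinear map $\ldb-,-\rdb\colon R\ot R\to R\ot R$ of degree $2-d$ satisfying graded cyclic antisymmetry, the Leibniz (derivation) rule, the double Jacobi identity, and compatibility with $b^{*}$.

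For the construction I would exploit the cyclic symmetry of the Fukaya operations recalled in the introduction, namely that the count of pseudo-holomorphic disks with boundary marked points is invariant under cyclic rotation; this is the chain-level avatar of $(\ast)$ and provides a cyclically invariant pairing $\langle-,-\rangle$ of degree $-d$ which need not be non-degenerate. On the generators of $R$ — the single-morphism duals sitting in $\A^{*}[-1]$ — the double bracket is defined by pairing two letters via $\langle-,-\rangle$ and recording the result in the unit line of $R\ot R$, and it is then propagated to all of $R$ by imposing the derivation rule; the higher operations $m_n$ enter through $b^{*}$ and through the gluing of strings. The degree-$(2-d)$ normalization reflects the degree $-d$ of the cyclic pairing together with the shift built into the bar construction. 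After passing to cohomology, the commutator quotient $R_\natural=R/[R,R]$ then recovers, by Van den Bergh's general mechanism, the Lie bracket on $\HC^{\bullet}(\mathrm{Fuk}(M))$ advertised in the abstract.

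I would then check the axioms in increasing order of difficulty. Graded cyclic antisymmetry $\ldb a,b\rdb=-\ldb b,a\rdb^{\circ}$ follows directly from the cyclic symmetry of the pairing, since interchanging the two arguments swaps the two tensor factors up to the prescribed Koszul sign. The derivation rule in the second argument is built into the definition, and the behavior in the first argument then follows formally from antisymmetry. Compatibility with the differential, $b^{*}\ldb a,b\rdb=\ldb b^{*}a,b\rdb\pm\ldb a,b^{*}b\rdb$, is where the $A_\infty$ relations first intervene: it reduces to the quadratic identity $b^{2}=0$ together with the cyclic symmetry, expressing that cutting and regluing commutes with applying the structure maps.

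The main obstacle will be the double Jacobi identity, the graded analogue of Van den Bergh's vanishing of $\ldb a,\ldb b,c\rdb\rdb_{L}$ summed cyclically in $R^{\ot 3}$. Here two independent applications of the bracket interact, so the verification requires organizing all configurations of two gluings on three strings and showing that the would-be obstruction is a total expression in the $A_\infty$ associativity relations, which then cancels. I expect this to be the longest computation, dominated by Koszul-sign and degree-$(2-d)$ bookkeeping; the cleanest route is probably to encode the entire double bracket as a single operation on $R$ and to reinterpret the Jacobi identity as a master equation that is a formal consequence of $b^{2}=0$ and cyclic invariance, thereby reducing the tangle of terms to the already established $A_\infty$ structure.
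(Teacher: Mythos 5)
Your overall architecture is the paper's: a necklace-type cut-and-paste double bracket on the dual bar construction $\tilde B(\mathrm{Fuk}(M))^{\vee}$, verified axiom by axiom and then fed into Van den Bergh's machinery to produce the Lie bracket on $\mathrm{HC}^{\bullet}$. But there is a genuine gap in how you produce the pairing from which the bracket is built. Cyclic invariance of the disk counts does not by itself ``provide a cyclically invariant pairing of degree $-d$'': a family of operations $m_n$ with a cyclic symmetry property is not a pairing on $\Hom(L_0,L_1)\otimes\Hom(L_1,L_0)$. What the construction actually requires, and what the paper supplies, is the geometric duality on Hamiltonian chords, $p\mapsto p^{*}$ (the same chord traversed backwards), giving a basis-to-basis isomorphism $\Hom^{i}(L_0,L_1)\cong\Hom^{d-i}(L_1,L_0)$ with $|p|+|p^{*}|=d$ (equation \eqref{sum_gradings}), together with a choice of signs $\mathrm{sgn}(p,p^{*})$ playing the role of the symplectic form on the arrows of a doubled quiver (Convention \ref{sign_conv}). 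Without this duality you cannot say which letter gets inserted into the second string when you cut the first, so your bracket is not actually defined by the recipe you give; and Remark \ref{rmk_cyclicity} stresses that the resulting formal pairing is graded symmetric and does \emph{not} satisfy the Calabi--Yau condition \eqref{CY_cond}, so it is not ``the chain-level avatar of $(\ast)$.'' The cyclic invariance of the counts, condition \eqref{cyclic_inv_counting}, is a separate hypothesis about the structure constants \emph{relative to} this duality.

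You also have the roles of the hypotheses reversed, so your plan for the step you identify as hardest would not go through as stated. In the paper the double Jacobi identity is a purely formal cancellation of six families of cut-and-paste configurations (\eqref{dJ1a} against \eqref{dJ3b}, \eqref{dJ1b} against \eqref{dJ2a}, \eqref{dJ2b} against \eqref{dJ3a}), using only Koszul signs and the antisymmetry between $\mathrm{sgn}(p,p^{*})$ and $\mathrm{sgn}(p^{*},p)$; it invokes neither the A$_\infty$ relations nor cyclic invariance, and it holds already for the underlying non-DG bracket, so it cannot be ``a formal consequence of $b^{2}=0$.'' Conversely, compatibility with the differential --- which you treat as a routine consequence of $b^{2}=0$ --- is exactly where the geometry enters: applying the dual codifferential produces extra terms in which some $\ov m_r$ straddles the inserted chord $p$, and these cancel against the corresponding terms straddling $p^{*}$ only because the counts $\#\mathcal M(a^{*},a_n,\cdots,a_1)$ are cyclically invariant. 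That cancellation (terms \eqref{ext1} against \eqref{ext2} in the paper) is the real analytic/geometric content of the theorem and is absent from your outline.
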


As a corollary (Corollary \ref{main_cor}), the cyclic cohomology of the Fukaya
category of an exact symplectic manifold with vanishing
first Chern class has a degree $2-d$ graded Lie algebra structure.

The rest of the paper is devoted to the proof of Theorem \ref{main_thm}.
It is organized as follows:
In Section \ref{Sect_AinftyDPA} we first recall the definition of A$_\infty$ categories
and their Hochschild and cyclic (co)homologies, and
then construct a double Poisson bracket on a class of A$_\infty$ categories;
in Section \ref{Sect_Fuk} we first briefly recall the construction of Fukaya categories
and then prove Theorem \ref{main_thm}; after that,
we discuss some relations of the main result to string topology, a theory developed by Chas and
Sullivan (\cite{CS1,CS2}); finally, we give the detailed proof of Lemma \ref{cyclic_sum} in Appendix A.

\begin{g_convention}
Throughout the paper, we fix a ground field $k$
of characteristic zero. All vector spaces, their morphisms and tensor products are
assumed to be over $k$.
\end{g_convention}

\begin{ack}We would like to thank Song Yang for helpful communications,
and the anonymous referee for carefully reading the paper and
pointing out imprecisions and errors in the earlier draft.
All authors are partially supported by NSFC No. 11271269.
H.-L. Her and S. Sun are also partially supported by NSFC No. 10901084
and No. 11131004, respectively.
\end{ack}

\setcounter{theorem}{0}
\renewcommand{\thetheorem}{\arabic{theorem}}

\section{A$\sb\infty$ categories and the double Poisson bracket}\label{Sect_AinftyDPA}

In this section we first collect some necessary concepts,
such as A$_\infty$ categories and their Hochschild and cyclic (co)homologies,
and then construct, for a class of A$_\infty$ categories, a double Poisson bracket
on the dual space of their bar construction.

\subsection{A$_\infty$ categories and their homologies}

\begin{definition}[A$_\infty$ category; {\it cf.} \cite{Fuk93,Seidel}]\label{def_Ainfty}
An \textit{A$_\infty$ category} $\mathcal A$ over $k$ consists of a set of objects $\mathrm{Ob}(\mathcal A)$,
a graded $k$-vector space
$\Hom(A_1,A_2)$ for each pair of objects $A_1,A_2\in\mathrm{Ob}(\mathcal A)$, and a sequence of
multilinear maps:
$$m_n: \Hom(A_{n},A_{n+1})
\otimes\cdots\otimes \Hom(A_2, A_3)\otimes \Hom(A_1,A_2)\to \Hom(A_1,A_{n+1}), $$
with degree $|m_n|=2-n$, for $n=1,2,\cdots,$ satisfying the following A$_\infty$ relations:
\begin{equation}\label{higher_htpy}
\sum_{p=1}^{n}\sum_{k=1}^{n-p+1}(-1)^{\mu_{p}} m_{n-k+1}(a_n,\cdots,a_{p+k},
m_{k}(a_{p+k-1},\cdots,a_{p}),a_{p-1},\cdots,a_1)=0,
\end{equation}
where $a_i\in\Hom(A_{i},A_{i+1})$, for $i=1,2,\cdots,n$, and
$\mu_{p}=\sum\limits_{r=1}^{p-1}|a_r|-(p-1)$.
\end{definition}

If all $m_i$ vanish except $m_2$,
then by letting
$a_2\circ a_1:=(-1)^{|a_1|}m_2(a_2,a_1)$
one obtains the usual small not-necessarily-unital graded linear category. If
all $m_i$ vanish except $m_1$ and $m_2$, then
one gets the usual small not-necessarily-unital {\it differential graded} (DG for short) category, with the differential
$d(a):=(-1)^{|a|}m_1(a)$.
If an A$_\infty$ category has only one object, say $A$, then $\Hom(A,A)$ is an
\textit{A$_\infty$ algebra}; and if furthermore, all $m_i$ vanish except $m_1$ and $m_2$,
then $\mathcal A $ is the usual not-necessarily-unital DG algebra with product
$a_2\cdot a_1:=(-1)^{|a_1|}m_2(a_2,a_1)$.
Also, for an A$_\infty$ category $\mathcal A$, since $m_1^2=0$ one may obtain the cohomology level
not-necessarily-unital category $\mathrm H(\mathcal A)$,
where the objects remain the same, while the morphisms between two objects, say $A, B$,
are the $m_1$-cohomology
$\mathrm H_\bullet(\mathrm{Hom}(A,B),m_1)$.

\begin{convention}[The signs]
The sign in equation (\ref{higher_htpy}) is given as follows.
First, for a graded vector space $V$, let $\ov V$ be
the de-suspension of $V$, that is, $(\ov V)_i=V_{i+1}$. Let $\Sigma: V\to \ov V$
be the identity map which maps $v$ to $\ov v$, and let
$$
\begin{array}{cccl}
\Sigma^{\otimes n}:&V\otimes\cdots\otimes V& \longrightarrow&\ov V\otimes\cdots\otimes\ov V\\
&v_1\otimes\cdots\otimes v_n&\longmapsto&(-1)^{(n-1)|v_n|+(n-2)|v_{n-1}|+\cdots+|v_{2}|}\ov v_1\otimes\cdots\otimes \ov v_n
\end{array}
$$
be the $n$-fold tensor of $\Sigma$.
Let $\ov m_n:(\ov V)^{\otimes n}\to \ov V$ be the degree 1 map such that the following diagram
\begin{equation}\label{signrule}
\xymatrixcolsep{5pc}
\xymatrix{V\otimes \cdots\otimes V\ar[d]^{\Sigma^{\otimes n}}
\ar[r]^-{m_n}&V\ar[d]^\Sigma\\
\ov V\otimes\cdots\otimes\ov V\ar[r]^-{\ov m_n}&\ov V
}\end{equation}
commutes. Then equation (\ref{higher_htpy}) is nothing but
\begin{equation}\label{htpyrel}
\sum_{p=1}^{n}\sum_{k=1}^{n-p+1}(-1)^{|\ov a_1|+\cdots+|\ov a_{p-1}|}\ov m_{n-k+1}
(\ov a_n,\cdots,\ov a_{p+k}, \ov m_k(\ov a_{p+k-1},\cdots, \ov a_{p}),
\ov a_{p-1},\cdots,\ov a_1)=0.
\end{equation}
The sign that appears in equation (\ref{htpyrel}) follows from the usual Koszul sign rule.
Namely, the canonical isomorphism
$V\otimes W\stackrel{\cong}{\to}W\otimes V$
is given by
$a\otimes b\mapsto (-1)^{|a||b|}b\otimes a$.
One then obtains equation (\ref{higher_htpy}) by converting equation (\ref{htpyrel}) via diagram (\ref{signrule}).
In the following all signs are assigned in this way.
\end{convention}

There is an alternate description of
the A$_\infty$ structure on $\mathcal A $ given as follows:
Let
$
B(\mathcal A ):=\oplus_{n\ge 0}B(\mathcal A)_n
$ (here $n$ is called the {\it weight}),
where
\begin{eqnarray*}
B(\mathcal A)_0&=&k,\\
B(\mathcal A)_n&=&\bigoplus_{A_1,\cdots, A_{n+1}\in\mathrm{Ob}(\mathcal A)}
\overline{\Hom}(A_{n},A_{n+1})
\otimes\cdots\otimes \overline{\Hom}(A_2, A_3)\otimes
\overline{\Hom}(A_1,A_2),\quad n\ge 1.
\end{eqnarray*}
$B(\mathcal A )$ has a natural co-unital, co-augmented
{\it coalgebra} structure, where the co-product is given by
\begin{eqnarray*}
\Delta(\ov a_n,\cdots,\ov a_2,\ov a_1)&=&1\otimes(\ov a_n,\cdots,\ov a_1)+(\ov a_n,\cdots,\ov a_1)\otimes 1\\
&+&
\sum_{i=1}^{n-1}(\ov a_n,\cdots, \ov a_{i+1})\otimes
(\ov a_{i},\cdots, \ov a_1).
\end{eqnarray*}
Grade the elements in $B(\mathcal A)$ by the sum of the gradings of their components,
then $B(\mathcal A)$ is in fact a graded coalgebra, and
equation (\ref{htpyrel}) is equivalent to saying that $\ov m:=\ov m_1+\ov m_2+\cdots$ is nothing but a degree one
co-differential on $B(\mathcal A )$.
The pair $\left(B(\mathcal A ),\ov m\right)$ is called the {\it bar construction} of $\mathcal A$.
In the following we will also use $\tilde B(\mathcal A):=\oplus_{n\ge 1}B(\mathcal A)_n$,
which is called the {\it reduced bar construction} of $\mathcal A$.
$\tilde B(\mathcal A)$ is a DG coalgebra without co-unit, where
the co-product (called the {\it reduced co-product})
is given by $\tilde \Delta (\ov a_n,\cdots,\ov a_1)=\sum_{i=1}^{n-1}(\ov a_n,\cdots, \ov a_{i+1})\otimes
(\ov a_{i},\cdots, \ov a_1)$.

We next recall the definition of Hochschild and cyclic homology of A$_\infty$ algebras/categories,
which are a combination of the ones of, for example, Getzler-Jones \cite[\S3]{GJ} and Kontsevich-Soibelman
\cite[\S7.2.4]{KS06} for A$_\infty$ algebras
as well as Keller \cite[\S5]{Keller2}, \cite[\S1.3]{Keller} and Costello \cite[\S7.4]{Cos07} for DG categories.

\begin{definition}[Hochschild homology]\label{def_hoch}
Let $\mathcal A$ be an A$_\infty$ category as above. The \textit{Hochschild
chain complex} $\Hoch_\bullet(\mathcal A)$ of $\mathcal A$ is the chain
complex whose underlying vector space is
\begin{equation}\label{defHoch}
\bigoplus_{n=0}^{\infty}\bigoplus_{A_1,A_2,\cdots,A_{n+1}\in\mathrm{Ob}(\mathcal A)}
\ov{\Hom}(A_{n+1},A_1)\otimes\ov{\Hom}(A_{n},A_{n+1})\cdots\otimes\ov{\Hom}(A_{2}, A_3)\otimes\ov{\Hom}(A_1,A_2)
\end{equation}
with differential $b=b'+b''$, where
\begin{eqnarray*}b'(\ov a_{n+1},\cdots,\ov a_2,\ov a_1)&=&
\sum_{k=1}^{n+1}\sum_{i=1}^{n-k+2}(-1)^{\varepsilon_k}(\ov a_{n+1},\cdots, \ov a_{k+i}, \ov m_i(\ov a_{k+i-1},
\cdots,\ov a_k),\ov a_{k-1},\cdots, \ov a_1),\\
b''(\ov a_{n+1},\cdots,\ov a_2,\ov a_1)&=&\sum_{j=0}^{n-1}\sum_{i=1}^{n-j}
(-1)^{\nu_{ij}}(\ov m_{i+j+1}(\ov a_{i},\cdots,\ov a_{1}, \ov a_{n+1},\cdots,\ov a_{n-j+1}),
\ov a_{n-j}\cdots,\ov a_{i+1}),
\end{eqnarray*}
where $\varepsilon_k=|\ov a_1|+\cdots+|\ov a_{k-1}|$ and
$\nu_{ij}=(|\ov a_1|+\cdots+|\ov a_i|)(|\ov a_{i+1}|+\cdots+|\ov a_{n+1}|)+(|\ov a_{n-j}|+\cdots+|\ov a_{i+1}|)$.
The associated homology is called the \textit{Hochschild homology} of $\mathcal A$,
and is denoted by $\mathrm{HH}_\bullet(\mathcal A)$.
\end{definition}

In the above definition, if $m_i=0$ for $i\ge 3$, that is, $\mathcal A$ is a small DG category,
then
$$b(\ov a_{n+1},\cdots, \ov a_2, \ov a_1)=(b'+b'')(\ov a_{n+1},\cdots, \ov a_2, \ov a_1)$$
becomes
\begin{eqnarray*}
b'(\ov a_{n+1},\cdots,\ov a_2,\ov a_1)&=&
\sum_{k=1}^{n+1}(-1)^{\varepsilon_k}(\ov a_{n+1},\cdots, \ov a_{k+1}, \ov m_1(\ov a_k),\ov a_{k-1},\cdots, \ov a_1)\\
&&+\sum_{k=1}^n(-1)^{\varepsilon_k}(\ov a_{n+1},\cdots,\ov m_2(\ov a_{k+1}, \ov a_k),\ov a_{k-1},\cdots,\ov a_1),\\
b''(\ov a_{n+1},\cdots,\ov a_2,\ov a_1)&=&
(-1)^{\nu_{1,0}}(\ov m_{2}(\ov a_{1},\ov a_{n+1}),
\ov a_{n}\cdots,\ov a_{2}),
\end{eqnarray*}
which agrees with the one
introduced by, for example, Keller \cite[\S1.3]{Keller}.

One may also define the {\it Hochschild cohomology} of an A$_\infty$ category (\textit{cf.}
Kontsevich-Soibelman \cite[\S7.1]{KS06} and Seidel \cite[\S1f]{Seidel}), which we will not use in this paper.
Instead, in the following
we are more concerned with the dual complex of (\ref{defHoch}), which is
$$
\prod_{n=0}^{\infty}\prod_{A_1, \cdots,A_{n+1}\in\mathrm{Ob}(\mathcal A)}\mathrm{Hom}\Big(
\ov{\Hom}(A_{n+1},A_1)\otimes\ov{\Hom}(A_{n},A_{n+1})\otimes
\cdots \otimes\ov{\Hom}(A_1,A_2), k\Big)
$$
with the induced dual differential of $b$.
Such a complex was originally used by Connes to define the cyclic cohomology (\textit{cf.} Loday \cite[\S2.4]{Loday}),
and is called
the {\it dual Hochschild chain complex} of $\mathcal A$,
and is denoted by $(\mathrm{CH}^\bullet(\mathcal A),b^*)$.

Now, let $t_0: \ov{\Hom}(A, A)\to\ov{\Hom}(A, A)$ be the identity map, and
\begin{multline*}t_{n}: \ov{\Hom}(A_{n+1},A_1)\otimes\cdots\otimes\ov{\Hom}
(A_2, A_3)\otimes\ov{\Hom}(A_1,A_2)\\
\longrightarrow
\ov{\Hom}(A_1,A_2)\otimes\ov{\Hom}(A_{n+1},A_1)\otimes\cdots\otimes\ov{\Hom}(A_{2},A_{3}),
\end{multline*}
for $n=1,2, \cdots$, be the multilinear cyclic operator
\begin{eqnarray}\label{def_t}
t_n(\ov a_{n+1}, \ov a_{n},\cdots,\ov a_{2},\ov a_{1})&:=&
(-1)^{|\ov a_{1}|(|\ov a_{n+1}|+\cdots+|\ov a_{2}|)}(\ov a_1,\ov a_{n+1},\ov a_n,\cdots,\ov a_2)\nonumber\\
&=&(-1)^{|\ov a_{1}|(\sum_{l=1}^{n+1}|\ov a_{l}|-|\ov a_{1}|)}(\ov a_1,\ov a_{n+1},\ov a_n,\cdots,\ov a_2).
\end{eqnarray}
Let $N_n=1+t_n+t_n^2+\cdots+t_n^{n}$.
Extend $t_n$ and $N_n$ to other elements in $\Hoch_\bullet(\mathcal A)$ trivially, and let
$T=t_0+t_1+t_2+\cdots$ and $N=N_0+N_1+N_2+\cdots$.

\begin{lemma}\label{cyclic_sum}
Let $\mathcal A $ be an A$_\infty$ category, and let $T$ and $N$ be as above.
We have the following commutative diagram:
\begin{equation}\label{commdiag_cmpx}
\xymatrixcolsep{3.5pc}
\xymatrix{
\mathrm{CH}_n(\mathcal A)\ar[r]^-{N}\ar[d]^-{b}&
\mathrm{CH}_n(\mathcal A)\ar[d]^-{b'}\ar[r]^-{1-T}&
\mathrm{CH}_n(\mathcal A)\ar[d]^b\\
\displaystyle\bigoplus_{m=0}^{n}\mathrm{CH}_m(\mathcal A)\ar[r]^-{N}&
\displaystyle\bigoplus_{m=0}^{n}
\mathrm{CH}_m(\mathcal A)\ar[r]^-{1-T}&
\displaystyle\bigoplus_{m=0}^{n} \mathrm{CH}_m(\mathcal A).
}
\end{equation}
\end{lemma}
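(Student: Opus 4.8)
The plan is to recognize the diagram as one cell of Connes' cyclic bicomplex and to reduce the assertion to two operator identities, the sign-verification of which is the entire content (and the reason the computation is relegated to Appendix A). Since $b$ and $b'$ are homogeneous of weight $-(i-1)$ on the summand where $\ov m_i$ acts, while $T$ and $N$ preserve the weight, it is enough to test both squares on a fixed weight-$n$ component $\mathrm{CH}_n(\mathcal A)$ and on a generic chain $(\ov a_{n+1},\dots,\ov a_1)$. Reading off the diagram, the left square commutes precisely when
\[
b'\circ N = N\circ b,
\]
and the right square commutes precisely when
\[
b\circ(1-T) = (1-T)\circ b'.
\]
I would prove these two equalities directly and independently.

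The mechanism behind both is a single geometric observation about the \emph{seam}, i.e.\ the cyclic gap between the last tensor factor $\ov a_{n+1}$ and the first factor $\ov a_1$. By definition $b=b'+b''$, where $b'$ gathers the applications of $\ov m_i$ to an interior block $(\ov a_{k+i-1},\dots,\ov a_k)$ that does not cross the seam, while $b''$ gathers exactly the seam-crossing applications of $\ov m_{i+j+1}$ to $(\ov a_i,\dots,\ov a_1,\ov a_{n+1},\dots,\ov a_{n-j+1})$. Rotating by the cyclic operator $t_n$ moves the seam: each seam-crossing block becomes, after a suitable power of $t_n$, an interior block, so that every term of $b''$ equals a term of $b'$ evaluated on a cyclically rotated argument (up to sign). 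This is the A$_\infty$ incarnation of Connes' simplicial relations. Feeding it into the right-hand identity, the interior contributions match directly, and the seam-crossing contributions $b''$ are converted by the seam relation into the rotated terms carried by $bT$ and $Tb'$, yielding $b(1-T)=(1-T)b'$. For the left-hand identity one instead symmetrizes: since $N_n=1+t_n+\cdots+t_n^{\,n}$ averages over all seam positions, the distinction between interior and seam-crossing applications is washed out, so $Nb''$ is reabsorbed into the rotated copies of $Nb'$ and the two sides collapse to $b'N=Nb$.

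The genuine obstacle is sign bookkeeping, not combinatorics. I would run the whole argument in the desuspended bar picture, where the A$_\infty$ relation takes the homogeneous Koszul form (\ref{htpyrel}), every $\ov m_i$ carries degree one, and the sign factors $\varepsilon_k$ and $\nu_{ij}$ of $b'$ and $b''$, together with the cyclic sign $(-1)^{|\ov a_1|(\sum_l|\ov a_l|-|\ov a_1|)}$ built into $t_n$, all become instances of the single Koszul rule. The verification then amounts to checking that, under the seam relation above, each monomial on one side of the two identities is matched by a unique monomial on the other side carrying the identical sign. The only delicate terms are the boundary cases of $b'$ in which the block $(\ov a_{k+i-1},\dots,\ov a_k)$ abuts the seam (i.e.\ $k=1$ or $k+i-1=n+1$) and the terms of $b''$ with $i=0$ or $j=0$, since these are precisely the places where a single application of $t_n$ converts an interior operator into a seam-crossing one; once these edge terms are reconciled, the remaining interior terms pair off automatically. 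This last reconciliation is exactly the computation carried out in Appendix A.
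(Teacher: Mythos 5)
Your proposal is correct and follows essentially the same route as the paper: the lemma is reduced to the two identities $b'N=Nb$ and $b(1-T)=(1-T)b'$, which are then verified by matching the interior (non-seam-crossing) applications of $\ov m_i$ against $b'$ and the seam-crossing ones against $b''$ after cyclic rotation, exactly as in the two propositions of Appendix A. The only caveat is that you defer the term-by-term Koszul sign verification rather than executing it, but the mechanism you describe is precisely the one the appendix carries out.
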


\begin{proof}
This is the A$_\infty$ version of cyclic bicomplex (\textit{cf.} \cite[\S2.1.2]{Loday}).
Since the computation involves the higher A$_\infty$ operators, which
seems to have not appeared in literature before, we give a proof in the appendix.
\end{proof}

\begin{definition}[Cyclic homology]\label{def_cycl}
Suppose $\mathcal A$ is an A$_\infty$ category.
The cokernel
$\Hoch_\bullet(\mathcal A)/(1-T)$ of $1-T$ forms a chain complex with the induced differential from the
Hochschild $b$-complex (still denoted by $b$).
Such chain complex is denoted by $\Cycl_\bullet(\mathcal A)$, and is called the \textit{Connes cyclic
complex} of $\mathcal A$. Its homology is called the \textit{cyclic homology} of $\mathcal A$, and is denoted by
$\mathrm{HC}_\bullet(\mathcal A)$.

The \textit{cyclic cochain complex} of $\mathcal A$ is the cyclically invariant sub-complex of the
dual Hochschild chain complex
$\Hoch^\bullet(\mathcal A )$, and is denoted by $\Cycl^\bullet(\mathcal A)$. Namely, suppose
$f\in\Hoch^\bullet(\mathcal A )$, then $f\in\Cycl^\bullet(\mathcal A)$ if and only if for all
$\alpha\in\Hoch_\bullet(\mathcal A)$, $f(\alpha)=f(T(\alpha))$. The corresponding cohomology
is called the \textit{cyclic cohomology} of $\mathcal A $ and is denoted by $\mathrm{HC}^\bullet(\mathcal A )$.
\end{definition}

\begin{remark}The definition of the cyclic homology and cohomology of A$_\infty$ algebras can
be found in Getzler-Jones \cite{GJ} and Penkava-Schwarz \cite{PS} respectively.
In literature Keller first defined the Hochschild homology of (small) DG categories (\cite{Keller2,Keller}),
which are given by formulas in Definitions \ref{def_hoch} with all $m_n$ ($n\ge 3$) vanishing.
His definition of cyclic homology for DG categories is slightly different but equivalent to Definition \ref{def_cycl}
in the case the DG category has a unit (see \cite[\S2.2-4 and \S5.4]{Keller2}).

In the above definition of A$_\infty$ categories, we did not require the category to have a unit. Indeed in symplectic geometry
the Fukaya categories may not have a unit, however, they are ``cohomologically unital", which means
the homology of a Fukaya category is a graded category with unit (for a proof of this statement
see Seidel \cite[\S9j]{Seidel}).
It is known from homological algebra ({\it cf.} \cite[Corollary 2.14]{Seidel}),
as will be recalled later, that any homologically unital
A$_\infty$ category, say $\mathcal A$, is canonically
quasi-isomorphic to a (unital) DG category, say $\mathcal I\mathcal A$.
Under this quasi-isomorphism the Hochschild and cyclic
complexes of $\mathcal A$ are mapped to the ones of $\mathcal I\mathcal A$ respectively.
\end{remark}

From the definitions,
we see that $\tilde B(\mathcal A )$ contains $\mathrm{CH}_\bullet(\mathcal A )$ as a subspace
and $\overline m$ on the former restricts to $b'$ on the latter.
Also, let
$$\tilde B(\mathcal A )^{\vee}=\mathrm{Hom}(\tilde B(\mathcal A ), k)=
\prod_{n=1}^{\infty}\prod_{A_1, \cdots,A_{n+1}\in\mathrm{Ob}(\mathcal A)}\mathrm{Hom}\Big(
\ov{\Hom}(A_{n},A_{n+1})\otimes
\cdots \otimes\ov{\Hom}(A_1,A_2), k\Big)
$$
with the dual differential $\ov m^{\vee}$,
equipped with the adic topology
indexed by the natural numbers with the usual order
and the subset of objects in $\mathcal A$ under inclusion.
$(\tilde B(\mathcal A )^{\vee}, \ov m^{\vee})$
has a natural non-unital DG algebra structure, where
the product is given by
$$(f\hdot g)(\overline a_n,\cdots,\ov a_1):=\mu\circ (f\otimes g)\circ\tilde \Delta(\ov a_n,\cdots,\ov a_1),$$
with $\mu$ the product on $k$.
We have the following proposition which is originally due to Quillen in
the case of a differential graded algebra:

\begin{proposition}[Quillen \cite{Quillen89} Lemma 1.2]\label{Prop_Quillen}
Suppose $\mathcal A $ is an A$_\infty$ category.
Then
\begin{equation}\label{com_quo}
\Cycl_\bullet(\mathcal A )\simeq \left( \tilde B(\mathcal A )\right)^\natural,\quad
\Cycl^\bullet(\mathcal A )\simeq \left(\tilde B(\mathcal A )^{\vee}\right)_\natural,
\end{equation}
where $(-)^\natural$ means the co-commutator subspace and $(-)_\natural$
is the topological commutator quotient space (i.e. the quotient by
the closure of the commutators
under the adic topology).
\end{proposition}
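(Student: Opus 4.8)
The plan is to prove the two isomorphisms in (\ref{com_quo}) as a graded, A$_\infty$-categorical enhancement of Quillen's original argument for DG algebras, using the inclusion $\CH_\bullet(\mathcal A)\hookrightarrow \tilde B(\mathcal A)$ as the subspace of ``loops'' (cyclically composable strings) and the norm operator $N$ of Lemma \ref{cyclic_sum} to pass between cyclic coinvariants and invariants. On the homology side, by definition $\Cycl_\bullet(\mathcal A)=\CH_\bullet(\mathcal A)/\mathrm{im}(1-T)$ is the space of cyclic coinvariants, whereas the co-commutator subspace $(\tilde B(\mathcal A))^\natural:=\ker\big((1-\tau)\circ\tilde\Delta\big)$, with $\tau$ the graded flip, is manifestly a subspace of $\tilde B(\mathcal A)$. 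The first, combinatorial, step is to identify this kernel explicitly. I would show, weight by weight, that an element of $\tilde B(\mathcal A)$ is co-commutative if and only if it is supported on cyclically composable strings and is invariant under the signed cyclic operator, i.e. $(\tilde B(\mathcal A))^\natural=\ker(1-T)\subseteq\CH_\bullet(\mathcal A)$. The object labels force a co-commutative element to be a ``loop'' (so that it lands in $\CH_\bullet(\mathcal A)$ rather than an open string), and a direct computation of $(1-\tau)\tilde\Delta$ with the deconcatenation coproduct shows the kernel is exactly the cyclic invariants; the sign produced by the graded flip in $\tilde\Delta$ matches the Koszul sign built into $t_n$ in (\ref{def_t}), as one already sees in weight two where $t_1=\tau$.

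Next I would produce the isomorphism itself. Since $k$ has characteristic zero and each $t_n$ generates a finite cyclic action of order $n+1$ on $\CH_n(\mathcal A)$ (so that $N_n(1-t_n)=1-t_n^{n+1}=0$), the norm $N$ descends to an isomorphism $\bar N\colon\Cycl_\bullet(\mathcal A)=\mathrm{coker}(1-T)\xrightarrow{\ \sim\ }\ker(1-T)=(\tilde B(\mathcal A))^\natural$ between coinvariants and invariants. The differential on $(\tilde B(\mathcal A))^\natural$ is the restriction of $\ov m$, which by the remark preceding the proposition is $b'$, and the commutative diagram of Lemma \ref{cyclic_sum} supplies the compatibility: the right-hand square $(1-T)b'=b(1-T)$ shows simultaneously that $b'$ preserves $\ker(1-T)$ and that $b$ preserves $\mathrm{im}(1-T)$, so both sides are genuine complexes, while the left-hand square $Nb=b'N$ gives $\bar N(b[x])=N(bx)=b'(Nx)=b'(\bar N[x])$, so that $\bar N$ intertwines the induced $b$ on $\Cycl_\bullet(\mathcal A)$ with $b'$ on $(\tilde B(\mathcal A))^\natural$. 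Hence $\bar N$ is an isomorphism of complexes, which is the first isomorphism in (\ref{com_quo}).

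The second isomorphism I would obtain by continuous duality. By definition $\Cycl^\bullet(\mathcal A)=\{f:f=f\circ T\}$ consists of the functionals killing $\mathrm{im}(1-T)$, hence $\Cycl^\bullet(\mathcal A)=(\CH_\bullet(\mathcal A)/\mathrm{im}(1-T))^\vee=\Cycl_\bullet(\mathcal A)^\vee$. On the other side, under the product $(f\hdot g)=\mu\circ(f\otimes g)\circ\tilde\Delta$ the graded commutator is $[f,g]=\mu\circ(f\otimes g)\circ(1-\tau)\tilde\Delta$ up to Koszul sign, so every commutator annihilates $(\tilde B(\mathcal A))^\natural=\ker((1-\tau)\tilde\Delta)$; identifying the closure of $[\tilde B(\mathcal A)^\vee,\tilde B(\mathcal A)^\vee]$ in the adic topology with the full annihilator of $(\tilde B(\mathcal A))^\natural$ then exhibits the topological commutator quotient $(\tilde B(\mathcal A)^\vee)_\natural$ as the continuous dual of $(\tilde B(\mathcal A))^\natural$. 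Dualizing the first isomorphism compatibly with differentials now yields $(\tilde B(\mathcal A)^\vee)_\natural\cong(\tilde B(\mathcal A))^{\natural\,\vee}\cong\Cycl_\bullet(\mathcal A)^\vee=\Cycl^\bullet(\mathcal A)$.

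I expect the main obstacle to be the topological bookkeeping in the last step, namely verifying that the closure of the commutator subspace is \emph{precisely} the annihilator of the co-commutator subspace, so that the naive dual of the first isomorphism descends correctly to the completed, adic setting; this must be combined with the purely combinatorial point that $\ker((1-\tau)\tilde\Delta)$ contains no elements beyond the cyclic invariants. By contrast, the differential-level compatibility, which is the delicate part in the classical algebra case, is here cleanly packaged by Lemma \ref{cyclic_sum} and so enters only as a citation.
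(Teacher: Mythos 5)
Your proposal is correct and follows essentially the same route as the paper: identify $(\tilde B(\mathcal A))^\natural$ with $\ker(1-T)\subseteq\CH_\bullet(\mathcal A)$ via the deconcatenation coproduct (object labels forcing loops, signs matching $t_n$), then use the norm operator $N$ together with Lemma \ref{cyclic_sum} to identify $\mathrm{coker}(1-T)$ with $\ker(1-T)$ as complexes, and dualize for the cochain statement. The only difference is one of emphasis --- you spell out the adic/annihilator bookkeeping for the second isomorphism, which the paper dispatches with ``similarly'' --- but the underlying argument is the same.
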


\begin{proof}
Recall that for a DG coalgebra $C$,
the co-commutator subspace $C^\natural$ is the subspace
$$C^\natural:=\mathrm{Ker}\{\tilde \Delta-\sigma\circ\tilde \Delta:C\to C\otimes C\},$$
where $\tilde \Delta$ is the reduced co-product and $\sigma$ is the switching operator $\sigma(a, b)=(-1)^{|a|\cdot|b|}(b, a)$.
Thus if
$$x\in \ov{\Hom}(A_{n},A_{n+1})
\otimes\cdots\otimes\ov{\Hom}(A_2, A_3)\otimes\ov{\Hom}(A_1,A_2)\subset\tilde B(\mathcal A )$$
lies in
$\left(\tilde B(\mathcal A )\right)^{\natural}$, then we have that $A_{n+1}=A_1$ otherwise
$\tilde \Delta(x)-\sigma\circ\tilde \Delta(x)$ cannot be eliminated. This means
$x\in (\mathrm{CH}_\bullet(\mathcal A ), b')$.
In this case, following Quillen \cite[Lemma 1.2]{Quillen89},
if we let $p_{i,j}(v)$ be the component of $v\in\tilde B(\mathcal A)\otimes \tilde B(\mathcal A)$ of
bi-weight $(i,j)$,
then
\begin{eqnarray*}
&&p_{i, n-i}\circ \sigma\circ\tilde \Delta(a_1,\cdots, a_n)\\
&=&(-1)^{(|a_1|+\cdots+|a_{n-i}|)(|a_{n-i+1}|+\cdots+|a_n|)}(a_{n-i+1},\cdots, a_n)\otimes(a_1,\cdots, a_{n-i})\\
&=&p_{i, n-i}\circ \tilde \Delta \circ T^i(a_1,\cdots, a_n),
\end{eqnarray*}
which means $p_{i, n-i}\circ (\tilde \Delta-\sigma\circ\tilde \Delta)(x)=p_{i, n-i}\circ\tilde \Delta\circ(1-T^i)(x)$.
Thus if $x\in\mathrm{Ker}(1-T)$,
then
$(\tilde \Delta-\sigma\circ\tilde \Delta)(x)=0$,
and conversely,
if $(\tilde \Delta-\sigma\circ\tilde \Delta)(x)=0$,
then by taking $i=1$ we immediately obtain $x\in\mathrm{Ker}(1-T)$.
This means that
$$\left(\tilde B(A)\right)^{\natural}\cong \mathrm{Ker}\{1-T\}.$$
Since $\mathrm{Coker}(1-T)\cong \mathrm{Ker}(1-T)$
and $b'N=Nb$, we obtain
$$\mathrm{CC}_\bullet(\mathcal A )=\mathrm{Coker}(1-T)
\cong\mathrm{Ker}(1-T)=\left(\tilde B(\mathcal A )\right)^{\natural}$$
as complexes.
Similarly we also have $\mathrm{CC}^\bullet(\mathcal A )\cong (\tilde B(\mathcal A )^\vee)_{\natural}$.
This completes the proof.
\end{proof}

\subsection{Construction of the double bracket}

We first recall Van den Bergh's definition of double Poisson algebra (\cite{VdB}).

\begin{definition}[Double Poisson algebra]\label{def_DPB}
Suppose $A$ is a graded associative algebra over $k$.
A \textit{double bracket} of degree $d$ on $A$ is a bilinear map
$$
\ldb-,-\rdb:A\otimes A\to A\otimes A
$$
which is a derivation of degree $d$ (for the outer $A$-bimodule structure on $A\otimes A$) in its second argument
and satisfies
\begin{equation}\label{db}
\ldb a,b\rdb=-{(-1)}^{(|a|+d)(|b|+d)}\ldb b,a\rdb^\circ,
\end{equation}
where $(u\otimes v)^\circ ={(-1)}^{|u||v|}v\otimes u$.

Suppose that $\ldb-,-\rdb$ is a double bracket of degree $d$ on $A$.
For $a,b_1,...,b_n$ homogeneous in $A$, let
$$\ldb a, b_1 \otimes \ldots \otimes b_n\rdb_L \,:=\, \ldb a,b_1 \rdb \otimes b_2 \otimes \ldots \otimes b_n \,\text{,}$$
and let
$$\sigma_s(b_1 \otimes \ldots \otimes b_n)\,:=\, {(-1)}^t b_{s^{-1}(1)} \otimes \ldots \otimes b_{s^{-1}(n)}\,$$
where
$ t:=\sum_{i<j; s^{-1}(j)<s^{-1}(i)} |b_{s^{-1}(i)}||b_{s^{-1}(j)}|$.
If furthermore $A$ satisfies the following
\textit{double Jacobi identity}
\begin{equation}\label{dJ}
\ldbg a , \ldb b,c \rdb \rdbg_L + (-1)^{(|a|+d)(|b|+|c|)} \sigma_{(123)}\ldbg b,\ldb c,a\rdb \rdbg_L + (-1)^{(|c|+d)(|a|+|b|)} \sigma_{(132)}
\ldbg c,\ldb a,b\rdb \rdbg_L =0,
\end{equation}
then $A$ is called a \textit{double $d$-Poisson algebra}, or a {\it double Poisson algebra of degree $d$}.
A DG algebra $(A,\partial)$ is said to have a DG double $d$-Poisson structure
if $A$ admits a double $d$-Poisson structure which commutes with $\partial$.
\end{definition}

In the following, we focus on the case where $A$ is $\tilde{B}(\mathcal A)^{\vee}$ for an A$_\infty$
category $\mathcal A$. Note that we may consider
the double bracket in the complete sense, that is,
both the domain and the image may be taken to be $A\hat\otimes A$,
where $\hat\otimes$ is the completed tensor product under the adic topology,
and the double Jacobi identity is to hold in this complete sense.

\begin{assumption}\label{assumption_CYAlt}
In the following we consider a class of A$_\infty$ categories satisfying the following conditions:

(1) there exists a positive integer $d$ such that for each pair of objects $A,B\in\mathcal A $,
there is an isomorphism of finite dimensional $k$-vector spaces
\begin{equation}\label{Poincareduality}
\mathrm{Hom}^i(A,B)\cong\mathrm{Hom}^{d-i}(B,A),
\end{equation}
for all $i$, and under this isomorphism, a basis of $\mathrm{Hom}(A,B)$
is map to a basis of $\mathrm{Hom}(B,A)$ (in the following we denote by $p^*$
the image of an element $p$ of the basis under this isomorphism);

(2) shift the gradings of the elements in the morphism space of $\mathcal A$ down by one;
for basis elements
$p_n\in \ov{\mathrm{Hom}}( A_n, A_{n+1}), p_{n-1}\in\ov{\mathrm{Hom}}(A_{n-1},A_n),
\cdots, p_{1}\in\ov{\mathrm{Hom}}(A_1,A_2)$, write
\begin{equation*}
\ov m_n(p_n, p_{n-1},\cdots, p_1)=\sum_{q }
\epsilon(q^*, p_{n},\cdots, p_1)\cdot q,
\end{equation*}
where $q$ runs over the basis of $\ov{\mathrm{Hom}}(A_1, A_{n+1})$, and
$\epsilon(q^*, p_{n},\cdots, p_1)\in k$,
then $\epsilon(q^*, p_{n},\cdots, p_1)$ is cyclically invariant, that is,
\begin{equation}\label{cyclic_inv_counting}
\epsilon(q^*, p_{n},\cdots, p_1)
=(-1)^{|q^*|(|p_1|+\cdots+|p_n|)}
\epsilon(p_{n},\cdots, p_1,q^*).
\end{equation}
\end{assumption}

\begin{convention}
\label{sign_conv}
For some sign issues,
in the following we make the following convention:
for each pair of basis elements $p, p^*$, we have two ordered set $(p,p^*)$ and $(p^*,p)$,
and assign a sign to one of them
by
$$\mathrm{sgn}(p,p^*)=(-1)^{|p|}$$
if $p\ne p^*$ or if $p=p^*$ and $p$ is of odd degree (and in this case
$\mathrm{sgn}(p^*,p)=(-1)^{|p^*|+(|p^*|+1)(|p|+1)}$ by the Koszul sign convention),
and
$$\mathrm{sgn}(p,p^*)=0$$
if $p=p^*$ and $p$ is of even degree.
There is a choice in assigning the signs, but once assigned, they are fixed in the rest.
To get some idea about the sign, let us remind that in
the works of Ginzburg \cite{Gin} and Van den Bergh \cite{VdB}, in order to construct
the Lie/Poisson bracket (respectively, the double Poisson bracket) on the closed path space (respectively,
the path algebra)
of a doubled quiver, one has to equip a {\it symplectic} pairing on the space of edges:
given a quiver $Q$, first double it, that is, to each edge $a$ in $Q$, add one more edge $a^*$
but with arrow reversed; then the symplectic pairing is given by
$\langle a, b^*\rangle=
-\langle b^*, a\rangle=1$ if $b^*=a^*$ and $0$ otherwise. However, if the quiver is already doubled,
then for each pair of such edges, one has to choose which one is $a$ and which one is $a^*$ to
define the symplectic pairing; whichever
is chosen as the original edge will not affect the conclusion. The sign given
above is just a DG version
of theirs.
\end{convention}

\begin{proposition}\label{main_lemma}
Let $\mathcal A $ be an A$_\infty$ category satisfying Assumption \ref{assumption_CYAlt}, and let
$ \tilde{B}(\mathcal A )^{\vee}$
be the dual space of the reduced bar construction of $\mathcal A $.
Define $$\ldb-,-\rdb: \tilde{B}(\mathcal A )^{\vee}
\hat\otimes \tilde{B}(\mathcal A )^{\vee}
\to  \tilde{B}(\mathcal A )^{\vee}\hat\otimes  \tilde{B}(\mathcal A )^{\vee}$$
by the following formula: for homogeneous $f,g\in \tilde{B}(\mathcal A )^{\vee}$,
\begin{eqnarray}
&&\ldb f,g\rdb((a_m,\cdots,a_1),(b_n,\cdots,b_1))\nonumber\\
&=&\sum_{i=1}^{m+1}\sum_{j=1}^{n+1}\sum_{p }(-1)^{\varepsilon_{ij}}\mathrm{sgn}(p,p^*)
f(b_n,\cdots, b_j,p,a_{i-1},\cdots,a_1)g(a_m,\cdots,a_i,p^*,b_{j-1},\cdots,b_1),\;\;\;\;\;\;\label{formula_dpb}
\end{eqnarray}
where
\begin{eqnarray*}
u=(a_m,\cdots, a_1)&\in& \ov{\mathrm{Hom}}(A_{m}, A_{m+1})\otimes\cdots\otimes\ov{\mathrm{Hom}}(A_1,A_2),\\
v=(b_n,\cdots,b_1)&\in&\ov{\mathrm{Hom}}(B_{n}, B_{n+1})\otimes\cdots\otimes\ov{\mathrm{Hom}}(B_1,B_2),
\end{eqnarray*}
$\varepsilon_{ij}=(|a_m|+\cdots+|a_i|+|b_n|+\cdots+|b_j|+|p|)(|a_{i-1}|+\cdots+|a_1|)+(|a_m|+\cdots+|a_i|)(|b_n|+\cdots+|b_j|+|p^*|)$,
and
$p$ runs over the basis of $\ov{\mathrm{Hom}}(A_i,B_j)$.
Then $\ldb-,-\rdb$ defines in the complete sense a DG double Poisson bracket of degree $2-d$ on
$ \tilde{B}(\mathcal A )^{\vee}$.
\end{proposition}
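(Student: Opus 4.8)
The plan is to verify, in turn, the four ingredients packaged into the statement: that the formula (\ref{formula_dpb}) is well defined and lands in $\tilde B(\mathcal A)^\vee\hat\otimes\tilde B(\mathcal A)^\vee$ while being a derivation of degree $2-d$ in its second argument; that it satisfies the cyclic skew-symmetry (\ref{db}); that it obeys the double Jacobi identity (\ref{dJ}); and finally that it is compatible with the differential $\ov m^\vee$, so that the structure is DG. A single observation organizes the whole argument: formula (\ref{formula_dpb}) itself only invokes the Poincaré-duality pairing $p\mapsto p^*$ from Assumption \ref{assumption_CYAlt}(1) and the sign convention of Convention \ref{sign_conv}, while the higher products $\ov m_n$ enter only through the differential. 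Consequently the first three properties are \emph{constant-coefficient} statements to be settled by sign bookkeeping, whereas the cyclic invariance (\ref{cyclic_inv_counting}) of Assumption \ref{assumption_CYAlt}(2) becomes essential only in the last step.

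For well-definedness I would note that for fixed bar-words $u$ and $v$ the triple sum in (\ref{formula_dpb}) is finite, and that raising the weight of the inputs only raises the weight of the outputs, so the map is continuous and extends to the completed tensor product. For the derivation property I would unwind the product on $\tilde B(\mathcal A)^\vee$ through the reduced coproduct $\tilde\Delta$: writing $g=g_1\hdot g_2$ amounts to splitting the argument of $g$ in (\ref{formula_dpb}) at the slot carrying $p^*$, and the two halves of the resulting sum reassemble precisely into $\ldb f,g_1\rdb\hdot g_2$ and $g_1\hdot\ldb f,g_2\rdb$ for the outer bimodule structure, the Koszul signs matching because the position of $p^*$ dictates which factor receives it. Skew-symmetry (\ref{db}) I would obtain by swapping $(f,u)\leftrightarrow(g,v)$, relabelling the summation indices $i\leftrightarrow j$, and exchanging $p\leftrightarrow p^*$: the relation between $\mathrm{sgn}(p,p^*)$ and $\mathrm{sgn}(p^*,p)$ fixed in Convention \ref{sign_conv}, together with the Koszul sign defining $(-)^\circ$, is exactly what converts $\varepsilon_{ij}$ into $\varepsilon_{ji}$ up to the prefactor $-(-1)^{(|f|+2-d)(|g|+2-d)}$ demanded by (\ref{db}); the even-degree self-dual case, where $\mathrm{sgn}(p,p^*)=0$, is consistent since such a term would have to equal its own negative.

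The double Jacobi identity (\ref{dJ}) is where I expect the real work to lie. Expanding $\ldbg a,\ldb b,c\rdb\rdbg_L$ produces a double sum: the inner bracket splices $b$ and $c$ through one pairing $(p,p^*)$, and the outer bracket splices $a$ against the first tensor factor through a second pairing $(q,q^*)$. I would classify all terms arising from the three cyclically permuted summands of (\ref{dJ}) according to the relative positions of the two cuts and the two pairings. Terms in which the two pairings act on disjoint regions occur in exactly two of the three summands and must cancel; the cancellation is governed by the skew-symmetry of the pairing encoded in Convention \ref{sign_conv} together with the permutation signs hidden in $\sigma_{(123)}$ and $\sigma_{(132)}$. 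This reduces the identity to a finite list of combinatorial configurations and is the graded, A$_\infty$ analogue of Van den Bergh's verification for doubled quivers; the difficulty is entirely in organizing the terms and matching the Koszul signs, not in any conceptual point.

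Finally, for the DG condition I would establish $\ov m^\vee\ldb f,g\rdb=\ldb\ov m^\vee f,g\rdb\pm\ldb f,\ov m^\vee g\rdb$ by expanding both sides. Applying the dual bar differential reinserts $\ov m_n$ operations, some of which straddle the spliced letters $p$ and $p^*$; precisely these straddling contributions are absent from the right-hand side and must cancel among themselves. Here the cyclic invariance (\ref{cyclic_inv_counting}) is decisive: it identifies the contribution with $p$ sitting at the tail of an $\ov m_n$ with the contribution with $p^*$ sitting at its head, so that the two are glued by the duality pairing and cancel, and the bookkeeping of the cyclic structure is exactly that packaged in Lemma \ref{cyclic_sum} and Proposition \ref{Prop_Quillen}. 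Assembling the four steps then yields the degree $2-d$ DG double Poisson structure on $\tilde B(\mathcal A)^\vee$.
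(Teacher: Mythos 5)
Your proposal is correct and follows essentially the same route as the paper's proof: verify the degree count via $|p|+|p^*|=d-2$, check skew-symmetry by swapping the arguments together with $p\leftrightarrow p^*$ and invoking Convention \ref{sign_conv}, establish the derivation property by splitting the argument at the slot carrying $p^*$, prove the double Jacobi identity by classifying the doubly-spliced terms and cancelling them in pairs across the three cyclic summands, and finally cancel the ``straddling'' terms in the differential computation using the cyclic invariance (\ref{cyclic_inv_counting}). The only small inaccuracy is attributing the last cancellation to Lemma \ref{cyclic_sum} and Proposition \ref{Prop_Quillen}; the paper handles it directly from (\ref{cyclic_inv_counting}) without those results, which enter only later in Corollary \ref{main_cor}.
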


\begin{remark}
In the above proposition,
$\ldb-,-\rdb$ can in fact be extended to be defined on $B(\mathcal A)^{\vee}$, that is,
in the formula (\ref {formula_dpb}),
if $f$ or $g$ is in $k$, then one simply puts $\ldb f,g\rdb=0$.
Also,
the summands in the right-hand side of (\ref{formula_dpb}) for $j=1$ and $n+1$ and arbitrary $i$
are understood as
$$
f(b_n,\cdots, b_1, p, a_{i-1},\cdots, a_1)\cdot g(a_m,\cdots, a_i, p^*)
$$
and
$$
f( p, a_{i-1},\cdots, a_{1},)\cdot g(a_m,\cdots, a_i, p^*, b_n, \cdots, b_1)
$$
respectively, and for $i=1$ or $m+1$ the formulas are similarly given.
\end{remark}

\begin{proof}[Proof of Proposition \ref{main_lemma}]

First note that $\ldb-,-\rdb$
has degree $2-d$: the difference of degrees between
$$|(a_m,\cdots,a_1)|+|(b_n,\cdots,b_1)|\;\mbox{
and}\;
|(b_n,\cdots, b_j,p,a_{i-1},\cdots,a_1)|+|(
a_m,\cdots,a_i,p^*,b_{j-1},\cdots,b_1)|$$
is $|p|+|p^*|$, which is $d-2$ (recall that by our convention, all elements in
the bar construction have shifted their degree down by $1$), and
therefore the double bracket has degree $2-d$.

The double bracket $\ldb-,-\rdb$ is graded skew-symmetric, {\it i.e.} it satisfies equation (\ref{db}):
for any
$f, g\in \tilde{B}(\mathcal A )^{\vee}$, and any
$u=(a_m,\cdots, a_1),v=(b_n,\cdots,b_1)$ in $\tilde{B}(\mathcal A )$,
\begin{eqnarray*}
& & \ldb g, f\rdb^\circ ((a_m,\cdots, a_1),(b_n,\cdots, b_1))\\
&=&(-1)^{|u||v|}\ldb g,f\rdb((b_n,\cdots,b_1),(a_m,\cdots, a_1))\\
&=&\sum_{i,j}\sum_{p^*}(-1)^{|u||v|+\tilde\varepsilon_{ji}}\mathrm{sgn}(p^*,p)
g(a_m,\cdots, a_i,p^*,b_{j-1},\cdots, b_1)  f(b_n,\cdots, b_j,p, a_{i-1},\cdots, a_1)\\
&=&\sum_{i,j}\sum_p(-1)^{|u||v|+\tilde\varepsilon_{ji}+\varepsilon_{ij}+|p||p^*|+1} (-1)^{\varepsilon_{ij}}\mathrm{sgn}(p,p^*)\\
&&\quad\quad\quad\quad\quad\cdot
f(b_n,\cdots, b_j,p, a_{i-1},\cdots, a_1)
g(a_m,\cdots, a_i,p^*,b_{i-1},\cdots, b_1)\\
&=&-(-1)^{(|f|+d)(|g|+d)}\ldb f, g\rdb((a_m,\cdots,a_1),(b_n,\cdots, b_1)).
\end{eqnarray*}
where $\tilde\varepsilon_{ji}$ is similar to $\varepsilon_{ij}$ but with
$a_*$ replaced by $b_*$ and vice versa, and with $p,p^*$ switched.
A direct computation shows
$$(-1)^{|u||v|+\varepsilon_{ij}+\tilde\varepsilon_{ji}+|p||p^*|}=(-1)^{(|f|+d)(|g|+d)}$$
if
$\ldb f, g\rdb((a_m,\cdots,a_1),(b_n,\cdots, b_1))$ is non-zero. Basically the signs
given above follow the Koszul sign rule; the negative sign in the RHS
of the last equality comes from Convention \ref{sign_conv}, namely, whenever
$p$ and $p^*$ appear in an expression simultaneously,
then there is a negative sign added besides the Koszul sign, when their orders are switched.

We now show that $\ldb-,-\rdb$ is a derivation for the second component.
For $f, g, h\in \tilde{B}(\mathcal A )^{\vee}$,
suppose $\ldb f, g\rdb= \alpha'\hat\otimes \alpha''$, $\ldb f, h\rdb=\beta'\hat\otimes\beta''$, then
\begin{eqnarray*}
&&\ldb f,g\cdot h\rdb((a_m,\cdots, a_1),(b_n,\cdots,b_1))\\
&=&\sum_{i,j}\sum_p(-1)^{\varepsilon_{ij}}\mathrm{sgn}(p,p^*) f(b_n,\cdots, b_j,p,a_{i-1},\cdots,a_1) (g h)(a_m,\cdots,a_i,p^*,b_{j-1},\cdots,b_1)\\
&=&\sum_{i,j,k}\sum_p(-1)^{\varepsilon_{ij}+|h|(|a_m|+\cdots+|a_{k+1}|)}\mathrm{sgn}(p,p^*) f(b_n,\cdots, b_j,p,a_{i-1},\cdots,a_1) \\
&&\quad\quad\quad\quad \quad\quad\quad\quad \cdot g(a_m,\cdots,a_{k+1}) h(a_k,\cdots, a_i,p^*,b_{j-1},\cdots, b_{1})\\
&+&\sum_{i,j,k}\sum_p(-1)^{\varepsilon_{ij}+|h|(|a_m|+\cdots+|a_i|+|p^*|+|b_{j-1}|+\cdots+|b_k|) }\mathrm{sgn}(p,p^*) f(b_n,\cdots, b_j,p,a_{i-1},\cdots,a_1)\\
&&\quad\quad\quad\quad \quad\quad\quad\quad \cdot g(a_m,\cdots, a_i, p^*, b_{j-1},\cdots, b_k)
h(b_{k-1},\cdots,b_1)\\
&=&\sum_{i,j,k}\sum_p(-1)^{\varepsilon_{ij}+|h|(|a_m|+\cdots+|a_{k+1}|)}\mathrm{sgn}(p,p^*) g(a_m,\cdots,a_{k+1}) f(b_n,\cdots, b_j,p,a_{i-1},\cdots,a_1) \\
&&\quad\quad\quad\quad \quad\quad\quad\quad \cdot h(a_k,\cdots, a_i,p^*,b_{j-1},\cdots, b_{1})\\
&+&\sum_{i,j,k}\sum_p(-1)^{\varepsilon_{ij}+|h|(|a_m|+\cdots+|a_i|+|p^*|+|b_{j-1}|+\cdots+|b_k|) }\mathrm{sgn}(p,p^*) f(b_n,\cdots, b_j,p,a_{i-1},\cdots,a_1)\\
&&\quad\quad\quad\quad \quad\quad\quad\quad \cdot g(a_m,\cdots, a_i, p^*, b_{j-1},\cdots, b_k)
h(b_{k-1},\cdots,b_1)\\
&=&((-1)^{|\beta'|}g\cdot \beta'\hat\otimes \beta''+\alpha'\hat\otimes \alpha''\cdot h)((a_m,\cdots, a_1),(b_n,\cdots,b_1))\\
&=&((-1)^{|\ldb f,h\rdb'|}g\cdot\ldb f,h\rdb+\ldb f,g\rdb \cdot h)((a_m,\cdots, a_1),(b_n,\cdots,b_1)).\end{eqnarray*}
This means $\ldb-,-\rdb$ is a derivation.

We next show that $\ldb-,-\rdb$ satisfies graded double Jacobi identity \eqref{dJ}, that is, up to Koszul sign,
$$
\ldbg f,\ldb g,h\rdb'\rdbg\hat{\otimes}\ldb g,h\rdb''\pm
\ldb h,f\rdb''\hat{\otimes}\ldbg g,\ldb h,f\rdb'\rdbg \pm \ldbg h,\ldb f,g\rdb'\rdbg''\hat{\otimes}
\ldb f,g\rdb''\hat{\otimes}\ldbg h,\ldb f,g\rdb'\rdbg'=0.
$$
In fact, for $u=(a_m,\cdots,a_1),v=(b_n,\cdots, b_1),w=(c_r,\cdots, c_1)$, we have
\begin{subequations}
\begin{eqnarray}
&&\big(\ldbg f,\ldb g,h\rdb'\rdbg\hat{\otimes}\ldb g,h\rdb''\big)\big((a_m,\cdots,a_1),(b_n,\cdots, b_1),(c_r,\cdots, c_1)\big)\nonumber\\
&=&\sum_{i,j}\sum_p(-1)^{\varepsilon_{ij}+|\ldb g,h\rdb''||w|}\mathrm{sgn}(p,p^*)
f(b_n,\cdots,b_j,p,a_{i-1},\cdots, a_1)\nonumber\\
&&\quad\quad\quad\quad\quad\cdot
\ldb g,h\rdb'(a_m,\cdots,a_i ,p^*,b_{j-1},\cdots, b_1)
\ldb g,h\rdb''(c_r,\cdots,c_1)\nonumber\\
&=&
\sum_{i,j,k,\ell}\sum_{p,q}(-1)^{\varepsilon_{ij}+\nu_{k\ell}
+|\ldb g,h\rdb''||w|}\mathrm{sgn}(p,p^*)\mathrm{sgn}(q,q^*)
f(b_n,\cdots,b_j,p,a_{i-1},\cdots, a_1)\nonumber\\
&&\quad\quad\quad\quad\quad\cdot
g(c_r,\cdots, c_{\ell},q,a_{k-1},\cdots, a_i,p^*,b_{j-1},\cdots, b_1)
h(a_m,\cdots, a_k, q^*, c_{\ell-1},\cdots,c_1)\;\;\;\;\;\label{dJ1a}\\
&+&\sum_{i,j,k,\ell}\sum_{p,q}(-1)^{\varepsilon_{ij}+\eta_{k\ell}
+|\ldb g,h\rdb''||w|}\mathrm{sgn}(p,p^*)\mathrm{sgn}(q,q^*)f(b_n,\cdots,b_j,p,a_{i-1},\cdots, a_1)\nonumber\\
&&\quad\quad\quad\quad\quad\cdot
g(c_r,\cdots, c_{\ell}, q, b_{k-1}, \cdots, b_1)
h(a_m, \cdots, a_i, p^*, b_{j-1}, b_k,q^*,c_{\ell-1},\cdots, c_1),\label{dJ1b}
\end{eqnarray}
\end{subequations}
and
\begin{subequations}
\begin{eqnarray}
&&\big(
\ldb h,f\rdb''\hat{\otimes}\ldbg g,\ldb h,f\rdb'\rdbg \big)
\big((a_m,\cdots,a_1),(b_n,\cdots, b_1),(c_r,\cdots, c_1)\big)\nonumber\\
&=&\sum_{j,k}\sum_p(-1)^{\lambda_{jk}+|u||\ldb g,\ldb,h,f\rdb'\rdb|}\mathrm{sgn}(p,p^*) \ldb h,f\rdb''(a_m,\cdots, a_1)\nonumber\\
&&\quad\quad\quad\quad\quad\cdot g(c_r,\cdots, c_k, p, b_{j-1}, \cdots, b_1)
\ldb h, f\rdb'(b_n,\cdots, b_j,p^*, c_{k-1},\cdots, c_1)\nonumber\\
&=&\sum_{j,k}\sum_p(-1)^{\lambda_{jk}+|u||\ldb g,\ldb,h,f\rdb'\rdb|}\mathrm{sgn}(p,p^*)g(c_r,\cdots, c_k, p, b_{j-1}, \cdots, b_1)\nonumber\\
&&\quad\quad\quad\quad\quad\cdot
\ldb h, f\rdb'(b_n,\cdots, b_j,p^*, c_{k-1},\cdots, c_1) \ldb h,f\rdb''(a_m,\cdots, a_1)\nonumber\\
&=&\sum_{i, j,k,\ell}\sum_{p,q}(-1)^{\lambda_{jk}+\xi_{i\ell}+|u||\ldb g,\ldb,h,f\rdb'\rdb|}\mathrm{sgn}(p,p^*)\mathrm{sgn}(q,q^*)
g(c_r,\cdots, c_k, p, b_{j-1}, \cdots, b_1)\nonumber\\
&&\quad\quad\quad\quad\quad\cdot
h(a_m,\cdots, a_i, q, b_{\ell-1},\cdots, b_j,p^*, c_{k-1},\cdots, c_1)
f(b_n,\cdots,b_{\ell},q^*, a_{i-1},\cdots,a_1)\;\;\;\;\; \label{dJ2a}\\
&+&\sum_{i, j,k,\ell}\sum_{p,q}(-1)^{\lambda_{jk}+\zeta_{i\ell}+|u||\ldb g,\ldb,h,f\rdb'\rdb|}\mathrm{sgn}(p,p^*)\mathrm{sgn}(q,q^*)
g(c_r,\cdots, c_k, p, b_{j-1}, \cdots, b_1)\nonumber\\
&&\quad\quad\quad\quad\quad\cdot
h(a_m,\cdots, a_i, q, c_{\ell-1}, \cdots, c_1)
f(b_n,\cdots,b_j,p^*, c_{k-1},\cdots,c_{\ell},q^*,a_{i-1},\cdots,a_1),\label{dJ2b}
\end{eqnarray}
\end{subequations}
and at last,
\begin{subequations}
\begin{eqnarray}
&&\big( \ldbg h,\ldb f,g\rdb'\rdbg''\hat{\otimes}
\ldb f,g\rdb''\hat{\otimes}\ldbg h,\ldb f,g\rdb'\rdbg'\big)\big((a_m,\cdots,a_1),(b_n,\cdots, b_1),(c_r,\cdots, c_1)\big)\nonumber\\
&=&(-1)^{|u||\ldb f,g\rdb''|+(|u|+|v|)|\ldb h,\ldb f,g\rdb'\rdb'|}
 \ldbg h,\ldb f,g\rdb'\rdbg\big((c_r,\cdots, c_1),(a_m,\cdots,a_1)\big)
\ldb f,g\rdb''(b_n,\cdots,b_1)\nonumber\\
&=&\sum_{i,k}\sum_p (-1)^{\sigma_{ik}+|u||\ldb f,g\rdb''|+(|u|+|v|)|\ldb h,\ldb f,g\rdb'\rdb'|}
\mathrm{sgn}(p,p^*)
h(a_m,\cdots, a_i, p, c_{k-1}, \cdots, c_1)\nonumber\\
&&\quad\quad\quad\quad\quad\cdot
\ldb f,g\rdb'(c_r,\cdots, c_k,p^*, a_{i-1},\cdots, a_1)\cdot
\ldb f,g\rdb''(b_n,\cdots, b_1)\nonumber\\
&=&\sum_{i,j,k,\ell}\sum_{p,q}
 (-1)^{\sigma_{ik}+\tau_{j\ell}+|u||\ldb f,g\rdb''|+(|u|+|v|)|\ldb h,\ldb f,g\rdb'\rdb'|}
\mathrm{sgn}(p,p^*)\mathrm{sgn}(q,q^*)
h(a_m,\cdots, a_i, p, c_{k-1}, \cdots, c_1)\nonumber\\
&&\quad\quad\quad\quad\quad\cdot
f(b_n,\cdots, b_j,q,c_{\ell-1},\cdots, c_k, p^*, a_{i-1},\cdots, a_1)
g(c_r,\cdots,c_{\ell},q^*,b_{j-1},\cdots, b_1)
\label{dJ3a}\\
&+&\sum_{i,j,k,\ell}\sum_{p,q} (-1)^{\sigma_{ik}+\rho_{j\ell}+|u||\ldb f,g\rdb''|+(|u|+|v|)|\ldb h,\ldb f,g\rdb'\rdb'|}
\mathrm{sgn}(p,p^*)\mathrm{sgn}(q,q^*)
h(a_m,\cdots, a_i, p, c_{k-1}, \cdots, c_1)\nonumber\\
&&\quad\quad\quad\quad\quad\cdot
f(b_n,\cdots, b_j,q,a_{\ell-1},\cdots, a_1)
g(c_r,\cdots, c_k, p^*,a_{i-1},\cdots, a_{\ell},q^*,b_{j-1},\cdots, b_1).\label{dJ3b}
\end{eqnarray}
\end{subequations}
In the above expressions, $\nu_{k\ell}, \eta_{k\ell}, \lambda_{jk},\xi_{i\ell},\sigma_{ik},
\tau_{j\ell}$ and $\rho_{j\ell}$
are all defined similarly to $\varepsilon_{ij}$.
After re-arranging the items, one sees that up to Koszul sign,
(\ref{dJ1a}) and (\ref{dJ3b}) cancel with each other (recall that we mentioned above that
$\mathrm{sgn}(p,p^*)$ and $\mathrm{sgn}(p^*,p)$ differ by a sign),
so do (\ref{dJ1b}) and (\ref{dJ2a}), and (\ref{dJ2b}) and (\ref{dJ3a}),
hence the graded double Jacobi identity is verified.

We last prove that the double bracket commutes with the differential $\overline m^{\vee}$. To simplify the notation,
denote $\partial=\overline m^{\vee}$.
We need to show
$\partial\ldb f,g\rdb=\ldb\partial f, g\rdb+(-1)^{|f|}\ldb f,\partial g\rdb$, where the differential $\partial$ acts
on tensor products by derivation. In fact,
\begin{subequations}
\begin{eqnarray}
&&\partial\ldb f,g\rdb\big((a_m,\cdots,a_1),(b_n,\cdots,b_1)\big)\nonumber\\
&=& \sum_i\sum_r(-1)^{|a_m|+\cdots+|a_{i+r}|}
 \ldb f,g\rdb\big((a_m,\cdots, \ov m_r(a_{i+r-1},\cdots,a_{i}),\cdots, a_1),(b_n,\cdots, b_1)\big)\;\;\;\;\label{eq1a}\\
&+&\sum_j\sum_s(-1)^{|u|+|b_n|+\cdots+|b_{s+j}|}
\ldb f,g\rdb\big((a_m,\cdots, a_1),(b_n,\cdots, \ov m_s(b_{j+s-1},\cdots, b_j),\cdots, b_1)\big),\;\;\;\;\;\;\;\;\;\label{eq1b}
\end{eqnarray}
\end{subequations}
while
\begin{subequations}
\begin{eqnarray}
&&
(\ldb\partial f,g\rdb+(-1)^{|f|}\ldb f,\partial g\rdb)((a_m,\cdots,a_1),(b_n,\cdots,b_1))\nonumber\\
&=&\sum_{i,j}\sum_p(-1)^{\varepsilon_{ij}}\mathrm{sgn}(p,p^*)(\partial f)(b_n,\cdots, b_j,p,a_{i-1},\cdots,a_1) g(a_m,\cdots,a_i,p^*,b_{j-1},\cdots,b_1)\nonumber\\
&+ &\sum_{i,j}\sum_p(-1)^{\varepsilon_{ij}+|f|}\mathrm{sgn}(p,p^*) f(b_n,\cdots, b_j,p,a_{i-1},\cdots,a_1) (\partial g)(a_m,\cdots,a_i,p^*,b_{j-1},\cdots,b_1)\nonumber\\
&=&\sum_{i,j}\sum_p(-1)^{\varepsilon_{ij}}\mathrm{sgn}(p,p^*) f(\ov m(b_n,\cdots, b_j,p,a_{i-1},\cdots,a_1))  g(a_m,\cdots,a_i,p^*,b_{j-1},\cdots,b_1)\label{eq2a}\\
&+&\sum_{i,j}\sum_p(-1)^{\varepsilon_{ij}+|f|}\mathrm{sgn}(p,p^*) f(b_n,\cdots, b_j,p,a_{i-1},\cdots,a_1) g(\ov m (a_m,\cdots,a_i,p^*,b_{j-1},\cdots,b_1)).\;\;\;\;\;\;\;\;\label{eq2b}
\end{eqnarray}
\end{subequations}
Comparing the above two equations, one sees that
summand (\ref{eq2a}) contains more terms than (\ref{eq1a}), which are in the form
\begin{equation}\label{ext1}
f(b_n,\cdots, b_k, \ov m_r(b_{k-1},\cdots, b_j,p,a_{i-1},\cdots,a_{\ell}),a_{\ell-1},\cdots, a_1)\cdot g(a_m,\cdots,a_i,p^*,b_{j-1},\cdots,b_1).
\end{equation}
Similarly, summand (\ref{eq2b}) contains more terms than (\ref{eq1b}), in the form
\begin{equation}\label{ext2}
f(b_n,\cdots, b_j,p,a_{i-1},\cdots,a_1)\cdot g(a_m,\cdots,a_{\ell},\ov m_r(a_{\ell-1},\cdots, a_i,p^*,b_{j-1},\cdots, b_k),
b_{k-1},\cdots,b_1).
\end{equation}
We claim these two types of terms (\ref{ext1}) and (\ref{ext2}) cancel with each other. In fact,
(\ref{ext1}) equals
\begin{eqnarray*}
&&\sum_{p}\sum_{q}f(b_n,\cdots, b_k, \epsilon(q,b_{k-1},\cdots, b_j,p,a_{i-1},\cdots,a_{\ell})q^*,a_{\ell-1},\cdots, a_1)\\
&&\quad\quad\quad\quad\quad\quad\quad\quad
\cdot g(a_m,\cdots,a_i,p^*,b_{j-1},\cdots,b_1)\\
&=&\sum_{p}\sum_{q}f(b_n,\cdots, b_k,q^*,a_{\ell-1},\cdots, a_1)\\
&&\quad\quad\quad\quad\quad\quad\quad\quad
\cdot g(a_m,\cdots,a_i,\epsilon(q,b_{k-1},\cdots, b_j,p,a_{i-1},\cdots,a_{\ell})p^*,b_{j-1},\cdots,b_1)\\
&=&\sum_{p}\sum_{q}(-1)^{\nu}f(b_n,\cdots, b_k,q^*,a_{\ell-1},\cdots, a_1)\\
&&\quad\quad\quad\quad\quad\quad\quad\quad
\cdot g(a_m,\cdots,a_i,\epsilon(p,a_{i-1},\cdots,a_{\ell},q, b_{k-1},\cdots, b_j)p^*,b_{j-1},\cdots,b_1)\\
&=&\sum_{p}\sum_{q}(-1)^{\nu}f(b_n,\cdots, b_k,q^*,a_{\ell-1},\cdots, a_1)\\
&&\quad\quad\quad\quad\quad\quad\quad\quad
\cdot g(a_m,\cdots,a_i,\ov m_r(a_{i-1},\cdots,a_{\ell},q, b_{k-1},\cdots, b_j),b_{j-1},\cdots,b_1),
\end{eqnarray*}
which is exactly (\ref{ext2}) after re-indexing the subscripts. In the above expression,
the second equality holds due to the cyclicity assumption \eqref{cyclic_inv_counting},
and also $\nu=(|a_{i-1}|+\cdots+|a_\ell|+|p|)(b_{k-1}+\cdots+|b_j|+|q|)$.
\end{proof}

\begin{remark}
The above calculation looks similar to the double bracket for cyclic algebras
(or more generally Calabi-Yau A$_\infty$
categories) presented in \cite{BCER}; however, Proposition \ref{main_lemma} is slightly more general.
The difference is that here we did not assume the Calabi-Yau cyclicity condition (\ref{CY_cond})
on $\mathcal A $; it is replaced by the cyclicity condition (\ref{cyclic_inv_counting}).
Any Calabi-Yau A$_\infty$ category $\mathcal A$ satisfies conditions
of Proposition \ref{main_lemma}:
if we choose a basis $\{e_i\}$ for $\mathrm{Hom}(A,B)$, then by the non-degenerate pairing
we automatically get a basis $\{e_i^*\}$, the dual basis of $\{e_i\}$, for $\mathrm{Hom}(B, A)$,
and then (\ref{CY_cond}) becomes (\ref{cyclic_inv_counting}).
Moreover, all formulas in Proposition \ref{main_lemma} and in its proof
do not depend on such choice of basis.
On the other hand, these two cyclicity conditions are not equivalent; as we shall see,
Fukaya categories (to be shown below) satisfy (\ref{cyclic_inv_counting}), but not (\ref{CY_cond})
at least in the naive way (see Remark \ref{rmk_cyclicity} below).
\end{remark}

Now suppose $(A, \ldb-,-\rdb)$ is a (possibly complete) DG double Poisson algebra of degree $d$.
Let $\mu:A \hat\otimes A \to A$ denote the multiplication on $A$, and let
$\{-,-\}:=\mu \circ \ldb-,-\rdb: A\hat\otimes A \to A$.
The following is due to Van den Bergh.

\begin{corollary}[Van den Bergh]\label{cor_VdB}
If $A$ is a (possibly complete) DG double Poisson algebra of degree $d$, then
$\{-,-\}$ makes the graded commutator quotient space
$A_\natural=A/[A,A]$ into a DG Lie algebra of degree $d$.
\end{corollary}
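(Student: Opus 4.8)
The plan is to follow Van den Bergh's original argument, adapted to the graded and differential setting. Write $\{-,-\}=\mu\circ\ldb-,-\rdb$. Four things must be checked: that $\{-,-\}$ descends to a bilinear map $A_\natural\ot A_\natural\to A_\natural$; that the descended bracket is graded skew-symmetric of degree $d$; that it obeys the graded Jacobi identity; and that $\partial$ acts on it by derivations. The starting observation is that, because $\ldb a,-\rdb$ is a derivation of degree $d$ for the \emph{outer} bimodule structure on $A\hat\otimes A$ and $\mu$ is multiplicative, the single map $\{a,-\}=\mu\circ\ldb a,-\rdb\colon A\to A$ is an honest graded derivation of degree $|a|+d$: if $\ldb a,c\rdb=\sum u\ot v$ then $\{a,bc\}=\{a,b\}c+(-1)^{(|a|+d)|b|}b\{a,c\}$. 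Since every graded derivation sends $[A,A]$ into $[A,A]$, the map $\{a,-\}$ descends to $\{a,-\}\colon A_\natural\to A_\natural$.

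Next I would establish graded skew-symmetry modulo commutators. Applying $\mu$ to the skew-symmetry axiom (\ref{db}) and using the elementary congruence $uv\equiv(-1)^{|u||v|}vu\pmod{[A,A]}$, one obtains
\[
\{a,b\}\equiv-(-1)^{(|a|+d)(|b|+d)}\{b,a\}\pmod{[A,A]}.
\]
This has two consequences. First, together with the previous paragraph it shows the bracket descends in the first slot as well: for a commutator the congruence gives $\{[b,c],a\}\equiv\pm\{a,[b,c]\}\pmod{[A,A]}$, whose right-hand side lies in $[A,A]$ because $\{a,-\}$ is a derivation; hence $\{[A,A],A\}\subseteq[A,A]$ and $\{-,-\}$ passes to a map $A_\natural\ot A_\natural\to A_\natural$. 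Second, the displayed congruence is exactly graded skew-symmetry of degree $d$ for the induced bracket on $A_\natural$.

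The substantial step is the graded Jacobi identity, which I would extract from the double Jacobi identity (\ref{dJ}). The double Jacobiator lives in $A\hat\otimes A\hat\otimes A$; the idea is to apply the total multiplication map $\mu^{(2)}\colon A^{\hat\otimes 3}\to A$ and project to $A_\natural$. Since $\{a,-\}$ is a derivation, $\{a,\{b,c\}\}=\mu\ldb a,\mu\ldb b,c\rdb\rdb$ splits by the Leibniz rule into a piece in which the inner bracket hits the first tensor factor of $\ldb b,c\rdb$ and a piece hitting the second; the first piece is precisely the image of $\ldb a,\ldb b,c\rdb\rdb_L$ under $\mu^{(2)}$, while the second piece is supplied, after multiplication and projection to $A_\natural$, by one of the cyclically permuted terms of (\ref{dJ}). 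Thus the three terms of (\ref{dJ}), carrying $\mathrm{id}$, $\sigma_{(123)}$ and $\sigma_{(132)}$, conspire under $\mu^{(2)}$ and the projection $A\to A_\natural$ to reassemble the full graded Jacobiator of $\{-,-\}$, the cyclic rotations of tensor factors being absorbed modulo $[A,A]$ up to Koszul sign. The bookkeeping of these signs, reconciling the $\sigma$-signs, the degree-$d$ shifts in (\ref{db}) and (\ref{dJ}), and the commutator identifications, is where I expect essentially all of the difficulty to concentrate.

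Finally, the DG compatibility is immediate from the hypotheses. Because $\partial$ commutes with $\ldb-,-\rdb$ and is a graded derivation of $\mu$, it obeys the Leibniz rule for $\{-,-\}=\mu\circ\ldb-,-\rdb$, namely $\partial\{a,b\}=\{\partial a,b\}+(-1)^{|a|}\{a,\partial b\}$; and since $\partial[A,A]\subseteq[A,A]$, it descends to $A_\natural$. Hence $(A_\natural,\partial,\{-,-\})$ is a differential graded Lie algebra of degree $d$, which completes the proof.
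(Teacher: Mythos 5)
Your proposal is correct and follows essentially the same route as the paper: the paper simply cites Van den Bergh [Corollary 2.4.6] for the underlying (non-differential) statement and then adds the observation that $\partial$ commutes with $\{-,-\}$ and preserves $[A,A]$, which is exactly your final paragraph. The earlier paragraphs of your argument are a faithful unpacking of Van den Bergh's own proof (derivation property of $\{a,-\}$, skew-symmetry modulo commutators, Jacobi extracted from the double Jacobi identity via $\mu^{(2)}$ and projection to $A_\natural$), so nothing is done differently in substance.
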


\begin{proof}
See Van den Bergh \cite[Corollary 2.4.6]{VdB} for the case of $A$ without differential.
Now suppose $A$ admits a differential $\partial$,
which respects both $\mu$ and $\ldb-,-\rdb$.
Then $\partial$ descends to $A_{\natural}$,
which makes $A_{\natural}$ into a complex (a DG module over $k$).
Moreover, since $\partial$ commutes with $\mu$ and $\ldb-,-\rdb$,
it commutes with $\{-,-\}$ as well,
and we then have that the commutator subspace $[A,A]$ is closed under both $\{-,-\}$ and $\partial$.
It then follows that $(A_{\natural},\{-,-\})$ is well-defined and is a DG Lie algebra of degree $d$.
\end{proof}

\section{Fukaya category of exact symplectic manifolds}\label{Sect_Fuk}

In this section we first recall some necessary ingredients about Fukaya categories,
then prove the main theorem, and after that, relate it to string topology in the case of cotangent bundles.

\subsection{Construction of the Fukaya category}\label{fukcat}

In this subsection we briefly recall the construction of the Fukaya category for
\textit{exact} symplectic manifolds. The complete treatment can be found in Seidel \cite{Seidel}.
The construction of the Fukaya category
on a general symplectic manifold is given in Fukaya \cite[Chapter 1]{Fukaya} and Fukaya et. al. \cite{FOOO}.
Most results in our situation are now well recognized, and hence are cited without proof,
but with precise and concrete references.
We here follow Seidel.

Intuitively, the Fukaya category $\mathrm{Fuk}(M)$ of $M$
is defined as follows: the objects are Lagrangian submanifolds in $M$;
suppose $L_1,L_2$ are two transversal objects,
$\Hom(L_1,L_2)$, called the \textit{Floer cochain complex},
is spanned by the transversal intersection points of $L_1$ and $L_2$, and
for $n$ objects $L_1,\cdots, L_{n+1}$, assume they are pairwisely transversal, then
$$\ov m_n:\ov{\Hom}(L_{n},L_{n+1})\otimes\cdots\otimes\ov{\Hom}
(L_{2},L_3)\otimes\ov{\Hom}(L_1,L_2)\to\ov{\Hom}(L_1,L_{n+1})$$
is given by counting pseudo-holomorphic disks
whose boundary lying in $L_1,L_2,\cdots, L_{n+1}$.
More precisely, if $a_1\in\Hom(L_1,L_2),\cdots,a_n\in\Hom(L_n,L_{n+1})$,
$$\ov m_n(\ov a_n,\cdots,\ov a_2,\ov a_1)=
\sum_{a\in L_1\cap L_{n+1}}\#\mathcal M(a, a_n,\cdots,a_1)\cdot\ov a,$$
where $\#\mathcal M(a, a_n,\cdots,a_1)$ is the counting of
the moduli space of pseudo-holomorphic disks
with $n+1$ (anti-clockwise) cyclically ordered marked points in its boundary, such that
these marked points are mapped onto $a_{n},\cdots,a_1,a$ and that the
rest of the boundary lie in $L_1,L_2,\cdots, L_{n+1}$.

The A$_\infty$ relations (equation (\ref{higher_htpy}))
follow from the compactification of $\mathcal M(a_{n+1},a_n,\cdots,a_1)$,
where those pseudo-holomorphic disks with all possible
``bubbling-off'' disks are added. More precisely,
the compactification of $\mathcal M(a_{n+1},a_n,\cdots,a_1)$ is a stratified space
whose
codimension one strata consists of
\begin{eqnarray}\label{Fukaya-boundary}
\bigcup_{1\le i<j\le n+1}\bigcup_{b\in {L}_i\cap {L}_{j}}
\mathcal{M}(b^*,a_{j-1},\cdots,a_{i})\times\mathcal{M}
(a_{n+1},\cdots, a_{j}, b, a_{i-1},\cdots, a_{1}).
\end{eqnarray}
Now suppose $\mathcal M(-)$ is one dimensional,
then its boundary has even number of components,
and therefore the number of these components is zero if we take
the coefficients of the Floer cochain complex to be
$\mathbb Z_2$.
This exactly corresponds the A$_\infty$ relations for Fukaya category, that is,
(\ref{Fukaya-boundary}) gives (\ref{higher_htpy}) and vise versa.

This is a very rough description of the construction of the Fukaya category.
It is only partially defined in the sense
that we have assumed that all Lagrangian submanifolds are pairwisely transversal;
also, the Floer cochain complexes thus described are only $\mathbb Z_2$ graded and
with only $\mathbb Z_2$ coefficients.
To make the Fukaya category be fully defined
and be graded over $\mathbb Z$ with arbitrary field coefficients, we have to introduce the following concepts.

\subsubsection{Exact symplectic manifolds and admissible Lagrangian submanifolds}\label{SS_assumpt}

A symplectic manifold $(M^{2d},\omega)$ is said to be \textit{exact} if $\omega=d\eta$ for some 1-form $\eta$.
An \textit{exact symplectic manifold with boundary} is a
quadruple $(M,\omega,\eta,J)$, where $M$ is a compact $2d$ dimensional manifold with
boundary, $\omega$ is a symplectic 2-form on $M$, $\eta$ is a
1-form such that $\omega=d\eta$ and $J$ is a $\omega$-compatible
almost complex structure. These data also satisfy the
following two convexity conditions:
\begin{itemize}\item[--] The negative Liouville
vector field defined by
$\omega(\cdot,X_\eta)=\eta$
points strictly inwards along the boundary of $M$;
\item[--] The boundary of $M$ is weakly $J$-convex, which means that any
pseudo-holomorphic curves cannot touch the boundary unless they are
completely contained in it.
\end{itemize}
In the following, for a symplectic manifold $(M^{2d},\omega)$ with or without boundary, we shall always assume $c_1(M)=0$.

A $d$-dimensional submanifold $L\subset M$ is
called \textit{Lagrangian} if $\omega|L=0$. In the following we will assume $L$ is
\textit{closed} and is disjoint from the boundary of $M$. $L$ is called {\it
exact} if $\eta|L$ is an exact 1-form.
In the following, we shall always assume $L$ is \textit{admissible}, namely,
(1) $\eta|_L$ is exact;
(2) $L$ has vanishing Maslov class; and
(3) $L$ is spin.

\begin{example}[Cotangent bundles]\label{ex_cot}
Let $N$ be a simply connected, compact spin manifold. Let $T^*N$ be the cotangent bundle of $N$
with the canonical symplectic structure. The cotangent bundle of $N$ is an exact symplectic manifold.
In particular, $N$, viewed as the zero section of $T^*N$, is an admissible Lagrangian submanifold.
\end{example}

\subsubsection{Construction of the Fukaya category}

From now on, we always assume $M$ is an exact symplectic manifold with $c_1(M)=0$, and
all Lagrangian submanifolds to be considered are compact and admissible. With these assumptions, we have:
\begin{enumerate}
\item[$-$] there exists a (time-dependent) Hamiltonian function on $M$ such that
for each pair of Lagrangian submanifolds $L_0$ and $L_1$,
$\phi(L_0)$ intersects $L_1$ transversally, where $\phi$ is the corresponding
Hamiltonian isotopy of $M$ (see Seidel
\cite[Lemma 9.5]{Seidel} for the existence of such Hamiltonian functions). The orbits of the intersection points
are called {\it Hamiltonian chords}, which span $\mathrm{Hom}(L_0, L_1)$ over $k$.
Moreover, after choosing the Hamiltonian function appropriately,
if $p$ is Hamiltonian chord in $\mathrm{Hom}(L_0, L_1)$, then the same orbit but with
the opposite direction, denoted by $p^*$, lies in $\mathrm{Hom}(L_1, L_0)$;

\item[$-$] there exists a ``grading" on the Lagrangian submanifolds (due to Seidel, see \cite[\S12a]{Seidel}),
which then gives a $\mathbb Z$-grading on the Hamiltonian chords, and
\begin{equation}\label{sum_gradings}
\mathrm{grading}(p)+\mathrm{grading}(p^*)=\dim M/2
\end{equation}
(for a proof of this equation, see Fukaya \cite[Lemma 2.27]{Fukaya});

\item[$-$] the equations for the pseudo-holomorphic disks will be perturbed
to be compatible with the Hamiltonian function (see \cite[\S8f]{Seidel}), and
there is a coherent orientation and compactification on the associated moduli spaces such that
(\ref{Fukaya-boundary}) holds (see \cite[\S9l and \S12b]{Seidel}).
\end{enumerate}

In summary, the conditions in \S\S\ref{SS_assumpt} guarantee that the Floer cochain complex
is defined over a field of characteristic zero and is $\mathbb Z$-graded, and that
the moduli spaces involved are oriented in a coherent way such that the A$_\infty$
hierarchy equations are satisfied.

\begin{theorem}[Fukaya, Seidel]
Suppose $M$ is an exact symplectic manifold with $c_1(M)=0$
and possibly with contact type boundary.
Suppose $ L_1, L_2,\cdots, L_{n+1}$ are admissible graded
Lagrangian submanifolds, and $a_i\in\Hom( L_i, L_{i+1})$ are Hamiltonian chords,
$i=1,2,\cdots, n$. Define
$$\begin{array}{cccl}
 m_n:&{\Hom}
( L_n, L_{n+1})\otimes\cdots\otimes{\Hom}( L_1, L_2)&\longrightarrow&
{\Hom}( L_1, L_{n+1})\\
&(a_n,\cdots, a_2, a_1)&\longmapsto&\displaystyle\sum_{a\in \Hom( L_1, L_{n+1})}\#
{\mathcal M}(a^*,a_n, \cdots, a_1)\cdot a,
\end{array}
$$
for $n=1,2,\cdots$,
where $a$ runs over the set of Hamiltonian chords connecting $L_1$ and $L_{n+1}$.
Then the set of admissible Lagrangian submanifolds
and the Floer cochain complexes among them together with
$\{ m_n\}$ defined above form a cohomologically unital A$_\infty$ category,
called the \textit{Fukaya category} of $M$, and is denoted by
$\mathrm{Fuk}(M)$.
\end{theorem}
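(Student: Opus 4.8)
The plan is to verify the three defining features of a cohomologically unital A$_\infty$ category for the data $(\mathrm{Ob},\Hom,\{m_n\})$ in the statement: that each $m_n$ has degree $2-n$, that the family $\{m_n\}$ satisfies the A$_\infty$ relations \eqref{higher_htpy}, and that the induced cohomology category $\mathrm H(\mathrm{Fuk}(M))$ has units. All three reduce to structural properties of the moduli spaces $\mathcal M(a^*,a_n,\ldots,a_1)$ of perturbed pseudo-holomorphic disks, so the bulk of the work is analytic rather than algebraic, and I would throughout follow and cite Seidel \cite{Seidel}.

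First I would establish that, for a generic choice of compatible (time-dependent) Hamiltonian perturbation and almost complex structure, each such moduli space is cut out transversally and is therefore a smooth manifold of dimension equal to its Fredholm index. The index computation, combined with the grading on the Lagrangians (giving \eqref{sum_gradings}), shows that $\mathcal M(a^*,a_n,\ldots,a_1)$ is zero-dimensional exactly when the degrees make $m_n$ shift total degree by $2-n$; this is precisely the assertion $|m_n|=2-n$. The exactness of $M$ and of the admissible Lagrangians is crucial here: by a Stokes-type energy argument it rules out nonconstant sphere and disk bubbles, so Gromov compactness applies and the only degenerations are the splittings along interior chords recorded in \eqref{Fukaya-boundary}.

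Next I would derive the A$_\infty$ relations from the boundary analysis of the one-dimensional moduli spaces. When $\mathcal M(a^*,a_n,\ldots,a_1)$ is one-dimensional, its compactification is a compact oriented one-manifold whose boundary is the union \eqref{Fukaya-boundary} of products of lower-dimensional (hence zero-dimensional) moduli spaces. Since the signed count of boundary points of a compact oriented one-manifold vanishes, summing over these broken configurations produces exactly the quadratic relation \eqref{higher_htpy}. Making the signs come out correctly, so that \eqref{Fukaya-boundary} reproduces the Koszul signs $(-1)^{\mu_p}$, requires a coherent system of orientations on all the moduli spaces at once; this is where the spin hypothesis enters, via the induced orientations on the determinant lines of the linearized operators.

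The hard part will be setting up the perturbation and orientation data coherently across the entire tower of moduli spaces simultaneously, rather than space by space: transversality must be arranged consistently so that the boundary of one compactified moduli space is genuinely identified with the products in \eqref{Fukaya-boundary}, and the orientations must be compatible with these gluing identifications. Granting this, cohomological unitality is comparatively soft: one exhibits a class in $\mathrm H^0(\Hom(L,L),m_1)=HF^0(L,L)$ represented by the count of half-disks with a single distinguished output (equivalently, the image of the fundamental class of $L$ under the Piunikhin--Salamon--Schwarz comparison with the classical cohomology of $L$) and checks that it acts as a two-sided identity on $\mathrm H(\mathrm{Fuk}(M))$; for the existence of suitable Hamiltonians, the coherent orientations, and the verification of cohomological unitality I would appeal to the corresponding results in \cite{Seidel}.
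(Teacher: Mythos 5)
Your proposal is correct and matches the paper's treatment: the paper itself gives no independent argument but simply cites Seidel \cite[Proposition 12.3]{Seidel}, and your sketch (transversality and index computations giving $|m_n|=2-n$, exactness excluding bubbling so that the compactified one-dimensional moduli spaces have boundary \eqref{Fukaya-boundary} and yield \eqref{higher_htpy}, spin structures providing coherent orientations, and the PSS/half-disk element giving cohomological units) is a faithful outline of exactly the proof being cited. Since you, too, ultimately defer the analytic heavy lifting to Seidel, there is nothing to add.
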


\begin{proof}
This is proved by Seidel in \cite[Proposition 12.3]{Seidel}.
\end{proof}

\subsection{Proof of the main theorem}

In the last subsection we have briefly recalled the construction of the Fukaya category
of an exact symplectic manifold. Now we are ready to show:

\begin{theorem}[Theorem \ref{main_thm}]\label{main_thm2}
Suppose $M$ is an exact symplectic $2d$-manifold with $c_1(M)=0$ and possibly with contact type boundary.
Denote by $\mathrm{Fuk}(M)$ the Fukaya category of $M$ and by
$\tilde{B}(\mathrm{Fuk}(M))^{\vee}$
the dual DG algebra of the reduced bar construction of $\mathrm{Fuk}(M)$.
Define $$\ldb-,-\rdb:\tilde{B}(\mathrm{Fuk}(M))^{\vee}
\hat\otimes \tilde{B}(\mathrm{Fuk}(M))^{\vee}
\to
 \tilde{B}(\mathrm{Fuk}(M))^{\vee}
\hat\otimes
 \tilde{B}(\mathrm{Fuk}(M))^{\vee}$$
by the following formula: for homogeneous
$f,g\in \tilde{B}(\mathrm{Fuk}(M))^{\vee}$,
\begin{eqnarray}
&&\ldb f,g\rdb((a_m,\cdots,a_1),(b_n,\cdots,b_1))\nonumber\\
&=&\sum_{i=1}^{m+1}\sum_{j=1}^{n+1}\sum_{p}(-1)^{\varepsilon_{ij}}\mathrm{sgn}(p,p^*)
f(b_n,\cdots, b_j,p,a_{i-1},\cdots,a_1) g(a_m,\cdots,a_i,p^*,b_{j-1},\cdots,b_1), \nonumber
\end{eqnarray}
where
\begin{eqnarray*}
(a_m,\cdots,a_1)&\in&
\ov{\mathrm{Hom}}(L_{m},L_{m+1})\otimes\cdots\otimes\ov{\mathrm{Hom}}(L_1,L_2),\\
(b_n,\cdots,b_1)
&\in& \ov{\mathrm{Hom}}(L_{n}', L_{n+1}')\otimes\cdots\otimes\ov{\mathrm{Hom}}(L_1',L_2'),
\end{eqnarray*}
and $p$ runs over the set of Hamiltonian chords connecting
$L_j'$ and $L_i$.
Then $\ldb-,-\rdb$ defines a DG double Poisson algebra of degree $2-d$ on
$\tilde{B}(\mathrm{Fuk}(M))^{\vee}$.
\end{theorem}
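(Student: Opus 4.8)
The plan is to deduce the theorem directly from Proposition \ref{main_lemma}: the formula defining $\ldb-,-\rdb$ here is verbatim the one in that proposition, so it suffices to verify that $\mathrm{Fuk}(M)$ is an A$_\infty$ category satisfying the two conditions of Assumption \ref{assumption_CYAlt}, and then quote Proposition \ref{main_lemma} word for word. The compatibility with the differential $\partial=\ov m^\vee$, the graded skew-symmetry, the derivation property, and the double Jacobi identity are all established there under exactly these hypotheses, so no further algebraic work is needed once the assumption is checked.

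The first condition to verify is the Poincar\'e-type duality (\ref{Poincareduality}). Recall that $\mathrm{Hom}(L_0,L_1)$ is spanned over $k$ by the Hamiltonian chords from $L_0$ to $L_1$, and that by construction each chord $p$ has a well-defined reverse $p^*$, the same orbit traversed in the opposite direction, lying in $\mathrm{Hom}(L_1,L_0)$. The assignment $p\mapsto p^*$ is a bijection between the two sets of chords, hence carries a basis to a basis; this is the required isomorphism. Equation (\ref{sum_gradings}) reads $\mathrm{grading}(p)+\mathrm{grading}(p^*)=\dim M/2=d$, so a chord $p\in\mathrm{Hom}^i(L_0,L_1)$ is sent to $p^*\in\mathrm{Hom}^{d-i}(L_1,L_0)$, which is precisely (\ref{Poincareduality}); finite-dimensionality follows from the transversality and compactness hypotheses on the admissible Lagrangians. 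One should also observe that the pairing $p\leftrightarrow p^*$ is exactly the symplectic-type pairing on the generators invoked in Convention \ref{sign_conv}, so the sign assignment $\mathrm{sgn}(p,p^*)$ transfers unchanged.

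The substance of the proof is the second condition, the cyclic invariance (\ref{cyclic_inv_counting}) of the structure constants. By the Fukaya--Seidel theorem, $\epsilon(q^*,p_n,\cdots,p_1)=\#\mathcal M(q^*,p_n,\cdots,p_1)$ is the signed count of the moduli space of pseudo-holomorphic disks carrying $n+1$ cyclically ordered boundary marked points labelled $q^*,p_n,\cdots,p_1$. The key geometric input is that this moduli space depends only on the \emph{cyclic} order of the marked points on $\partial D$ and not on a choice of starting point, so the cyclic rotation $(q^*,p_n,\cdots,p_1)\mapsto(p_n,\cdots,p_1,q^*)$ yields a canonically diffeomorphic moduli space and the unsigned counts coincide. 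What remains is to track the sign: the coherent orientations of Seidel (\S9l and \S12b), together with the $\mathbb Z$-grading supplied by the vanishing Maslov class and the spin condition, are arranged so that one cyclic rotation twists the orientation exactly by the Koszul factor $(-1)^{|q^*|(|p_1|+\cdots+|p_n|)}$ demanded in (\ref{cyclic_inv_counting}), after the degree shift built into the bar construction. I expect this orientation-versus-Koszul-sign reconciliation to be the main obstacle, since it is the only place where the genuinely symplectic-geometric content enters; everything else is a transcription into the hypotheses of Assumption \ref{assumption_CYAlt}. With both conditions verified, Proposition \ref{main_lemma} produces the degree $2-d$ DG double Poisson bracket on $\tilde B(\mathrm{Fuk}(M))^\vee$, completing the proof.
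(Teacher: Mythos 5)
Your proposal is correct and follows essentially the same route as the paper: the paper's proof likewise consists of observing that the Hamiltonian chords give the canonical dual bases with gradings satisfying (\ref{sum_gradings}), that the coherently oriented moduli counts are cyclically invariant in the sense of (\ref{cyclic_inv_counting}), and then invoking Proposition \ref{main_lemma}. The paper states these two facts even more tersely than you do (citing Seidel's orientation results without further sign analysis), so your additional remarks on the bijection of chords and the orientation-versus-Koszul-sign issue are elaborations of, not departures from, the published argument.
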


\begin{proof}
From the previous subsections, we observe that there are two key facts about Fukaya categories:
\begin{enumerate}
\item[--]
for each pair of admissible Lagrangian submanifolds $L_0, L_1$, there is
a \textit{canonical} basis for $\mathrm{Hom}( L_0, L_1)$ and $\mathrm{Hom}(L_1, L_0)$,
which are the Hamiltonian chords connecting them.
If $p$ is a Hamiltonian chord connecting $L_0$ and $L_1$,
then the same $p$ with direction reversed, denoted by $p^*$,
connects $L_1$ and $L_0$, and Fukaya proved that their gradings satisfy (\ref{sum_gradings});

\item[--] the moduli space $\mathcal M(a_n,\cdots, a_1, a_0)$ of pseudo-holomorphic disks
is coherently oriented such that
the counting $\#\mathcal M(a_n,\cdots, a_1,a_0)$ is cyclically invariant, that is,
$\#\mathcal M(a_n,\cdots, a_1,a_0)=(-1)^{|a_0|(|a_1|+\cdots+|a_n|)} \#\mathcal M(a_0,a_n,\cdots, a_{1})$.
\end{enumerate}
This means that $\mathrm{Fuk}(M)$
satisfies the conditions of Proposition \ref{main_lemma}, from which
the theorem follows.
\end{proof}

\begin{remark}\label{rmk_cyclicity}
One may formally define a pairing
$$\langle-,-\rangle:\Hom( L_0, L_1)
\otimes
\Hom( L_0, L_1)\to k$$
by
$$\langle p,q\rangle=\left\{\begin{array}{cl}(-1)^{|p|-1}\mathrm{sgn}(p,q),&\mbox{if\;}q=p^*,\\
0,&\mbox{otherwise,}\end{array}\right.$$
and extend it linearly to all morphism space (note that here we have not
shifted the degree down yet). It is graded symmetric; however,
it does not satisfy the Calabi-Yau condition
(\ref{CY_cond}).
\end{remark}

\begin{corollary}\label{main_cor}
Suppose $M$ is an exact symplectic $2d$-manifold with $c_1(M)=0$ and
possibly with contact type boundary.
Denote by $\mathrm{Fuk}(M)$ the Fukaya category of $M$.
Then $\mathrm{HC}^\bullet(\mathrm{Fuk}(M) )$ has a degree $2-d$ graded Lie algebra structure.
\end{corollary}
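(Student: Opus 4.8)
The plan is to obtain the Lie algebra structure on $\mathrm{HC}^\bullet(\mathrm{Fuk}(M))$ as a formal consequence of Theorem \ref{main_thm2}, combined with Van den Bergh's Corollary \ref{cor_VdB} and Quillen's identification in Proposition \ref{Prop_Quillen}. Write $A := \tilde{B}(\mathrm{Fuk}(M))^{\vee}$. By Theorem \ref{main_thm2}, $A$ is a (complete) DG double Poisson algebra of degree $2-d$, whose double bracket $\ldb-,-\rdb$ commutes with the differential $\partial = \ov m^{\vee}$.

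First I would invoke Corollary \ref{cor_VdB}. Since $A$ is a complete DG double Poisson algebra of degree $2-d$, the single bracket $\{-,-\} := \mu\circ\ldb-,-\rdb$ descends to the topological commutator quotient space $A_\natural = A/[A,A]$ and makes it into a DG Lie algebra of degree $2-d$; here the induced differential turns $A_\natural$ into a complex and $\{-,-\}$ is a chain map of degree $2-d$ that is a graded derivation for the differential.

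Next I would transport this structure to the cyclic side via Proposition \ref{Prop_Quillen}, which identifies $A_\natural = \left(\tilde{B}(\mathrm{Fuk}(M))^{\vee}\right)_\natural$ with the cyclic cochain complex $\Cycl^\bullet(\mathrm{Fuk}(M))$ \emph{as complexes}. Hence $\Cycl^\bullet(\mathrm{Fuk}(M))$ carries a DG Lie bracket of degree $2-d$. Finally, since this bracket commutes with the differential, it descends to cohomology in the usual way: the bracket of two cocycles is a cocycle, and the bracket of a cocycle with a coboundary is again a coboundary. Therefore the cyclic cohomology $\mathrm{HC}^\bullet(\mathrm{Fuk}(M))$, being the cohomology of $\Cycl^\bullet(\mathrm{Fuk}(M))$, inherits a degree $2-d$ graded Lie algebra structure, which is the assertion.

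The argument is essentially an assembly of results already established, so there is no deep obstacle; the points requiring attention are purely bookkeeping. One must check that the passage to cohomology is legitimate, namely that graded antisymmetry and the graded Jacobi identity descend with the correct signs for a bracket of degree $2-d$ — this is routine once one knows $\{-,-\}$ is a chain map and a graded derivation for $\partial$. The second point is the completed/adic setting: the double bracket, the commutator quotient, and Quillen's identification must all be read topologically, using $\hat\otimes$ and the closure of $[A,A]$. Since Theorem \ref{main_thm2}, Corollary \ref{cor_VdB} and Proposition \ref{Prop_Quillen} are each already stated in exactly this complete sense, no additional work is needed and the corollary follows directly.
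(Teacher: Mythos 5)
Your proposal is correct and follows exactly the paper's own argument: it assembles Theorem \ref{main_thm2}, Corollary \ref{cor_VdB} and Proposition \ref{Prop_Quillen} in the same order and then passes to cohomology. The additional remarks on descent of the bracket to cohomology and on the completed/adic setting are sensible bookkeeping but do not change the route.
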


\begin{proof}
This is a combination of Proposition \ref{Prop_Quillen}, Theorem \ref{main_thm2} and Corollary \ref{cor_VdB}.
More precisely, Theorem \ref{main_thm2} says that $\tilde{B}(\mathrm{Fuk}(M))^{\vee}$ is equipped with a DG double Poisson
bracket of degree $2-d$, and then by Corollary \ref{cor_VdB}, its commutator quotient space is a DG Lie algebra of
degree $2-d$, where the latter, by Proposition \ref{Prop_Quillen}, is exactly the
cyclic cochain complex $\mathrm{CC}^\bullet(\mathrm{Fuk}(M))$.
By taking homology, we see that $\mathrm{HC}^\bullet(\mathrm{Fuk}(M))$ thus have
a degree $2-d$ graded Lie algebra structure.
\end{proof}

Let us say some more words about this Lie algebra.
In \cite{CB} Crawley-Boevey introduced a notion of
{\it $\mathrm H_0$-Poisson structure}, which is defined as follows:
Suppose $A$ is an associative algebra; an $\mathrm H_0$-Poisson structure on $A$ is a Lie  bracket $\{-,-\}$ on
$A/[A, A]$ such that
$$\overline a\mapsto \{\overline a, -\},\quad \overline a\in A/[A,A]$$
is induced by a derivation $d_{a}: A\to A$. The significance of this notion is the following.

\begin{theorem}[Crawley-Boevey \cite{CB}]
Let $A$ be an associative algebra over an algebraically closed field $k$ of characteristic zero.
If $A$ admits an $\mathrm H_0$-Poisson structure, then there is a unique Poisson structure
on the coordinate ring of the
isomorphism classes of $n$-dimensional $k$-representations
$\mathrm{Rep}_{n}(A)/\!/\mathrm{GL}(n)$, for all $n\in\mathbb N$,
such that
the trace map
$$
\begin{array}{cccl}
\mathrm{Tr:} & A/[A,A]&\longrightarrow& \mathrm{Rep}_{n}(A)/\!/\mathrm{GL}(n)\\
&\ov a&\longmapsto&\big\{ \rho\mapsto \mbox{trace}(\rho(a))\big\}
\end{array}
$$
is a map of Lie algebras.
\end{theorem}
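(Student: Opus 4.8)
The plan is to build the Poisson bracket on $B^{G}:=\mathcal{O}(\mathrm{Rep}_n(A))^{\mathrm{GL}_n}$, the coordinate ring of the GIT quotient, by transporting the $\mathrm{H}_0$-Poisson data along the trace map and then checking the axioms on generators. Recall that $\mathcal{O}(\mathrm{Rep}_n(A))$ is generated by the matrix-entry functions $a_{ij}$ ($a\in A$, $1\le i,j\le n$) modulo the relations encoding matrix multiplication, and that in characteristic zero classical invariant theory (Procesi) shows $B^G$ is generated as a $k$-algebra by the trace functions $\mathrm{tr}(a)\colon\rho\mapsto\mathrm{trace}(\rho(a))$, which factor through $A/[A,A]$ and hence define the trace map $\mathrm{Tr}$.

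The first step is to convert the noncommutative derivations into geometry. For any derivation $d\colon A\to A$ I would define a vector field $X_d$ on $\mathrm{Rep}_n(A)$ by $X_d(a_{ij})=(d a)_{ij}$; the Leibniz rule for $d$ is exactly what is needed for $X_d$ to respect the matrix-multiplication relations, so $X_d$ is a well-defined derivation of $\mathcal{O}(\mathrm{Rep}_n(A))$, and a direct computation shows it is $\mathrm{GL}_n$-invariant and satisfies $X_d(\mathrm{tr}(b))=\mathrm{tr}(d b)$. Two observations make this usable: $X_d$ preserves $B^G$ (being $G$-invariant), and inner derivations $\mathrm{ad}_c$ map to the vector fields generating the conjugation action, hence act as zero on $B^G$. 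Since the $\mathrm{H}_0$-structure determines each $d_a$ only modulo inner derivations, the restriction $D_{\overline a}:=X_{d_a}|_{B^G}\in\mathrm{Der}(B^G)$ is well defined and depends only on $\overline a\in A/[A,A]$, with $D_{\overline a}(\mathrm{tr}(b))=\mathrm{Tr}(\{\overline a,\overline b\})$.

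Next I would define the bracket on Kähler differentials rather than on functions, which makes the biderivation property automatic. Since $B^G$ is generated by traces, $\Omega^1_{B^G}$ is generated as a $B^G$-module by the forms $d\,\mathrm{tr}(a)$, and I set $P(d\,\mathrm{tr}(a),d\,\mathrm{tr}(b)):=\mathrm{Tr}(\{\overline a,\overline b\})=\mathrm{tr}(d_a b)$, extended $B^G$-bilinearly; then $\{f,g\}:=P(df,dg)$ is a biderivation. The main obstacle is the well-definedness of $P$, namely that it respects the module relations among the $d\,\mathrm{tr}(a)$. Here is the crux: if $\sum_i g_i\,d\,\mathrm{tr}(a_i)=0$ in $\Omega^1_{B^G}$, then pairing this form with the vector field $X_{d_b}$ gives $\sum_i g_i\,\mathrm{tr}(d_b a_i)=0$ directly, settling the second slot; for the first slot I invoke the antisymmetry of the $\mathrm{H}_0$-bracket, $\mathrm{tr}(d_{a_i}b)=\mathrm{Tr}\{\overline{a_i},\overline b\}=-\mathrm{Tr}\{\overline b,\overline{a_i}\}=-\mathrm{tr}(d_b a_i)$, to reduce it to the same vanishing. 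Thus antisymmetry of the noncommutative bracket is precisely what renders the geometric bracket well defined, and it simultaneously yields antisymmetry of $P$.

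Finally I would verify the Jacobi identity and uniqueness. The Jacobiator of any biderivation is again a derivation in each argument, so it suffices to check Jacobi on the generators $\mathrm{tr}(a),\mathrm{tr}(b),\mathrm{tr}(c)$; there it equals $\mathrm{Tr}$ applied to the Jacobiator of the $\mathrm{H}_0$-bracket on $A/[A,A]$, which vanishes by hypothesis. That the trace map is a Lie-algebra homomorphism is built into the construction, since $\{\mathrm{tr}(a),\mathrm{tr}(b)\}=D_{\overline a}(\mathrm{tr}(b))=\mathrm{Tr}(\{\overline a,\overline b\})$. Uniqueness is then immediate: any Poisson bracket for which $\mathrm{Tr}$ is a Lie map must take the prescribed values on pairs of trace functions, and since these generate $B^G$ and a Poisson bracket is a biderivation, its values everywhere are forced. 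I expect the representation-theoretic input — that traces generate the invariants, and the dictionary between inner derivations and the infinitesimal $G$-action — together with the antisymmetry trick for well-definedness to be the substantive points, the remaining verifications being formal \emph{check-on-generators} arguments.
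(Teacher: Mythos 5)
The paper does not actually prove this statement: it is quoted from Crawley-Boevey, and the ``proof'' given is just the citation of \cite[Theorem~2.5]{CB}. Your proposal is a correct reconstruction of Crawley-Boevey's argument and follows essentially the same route: invariants of $\mathrm{Rep}_n(A)$ are generated by trace functions in characteristic zero; a derivation $d$ of $A$ induces a $\mathrm{GL}_n$-invariant vector field with $X_d(\mathrm{tr}\,b)=\mathrm{tr}(db)$; the bracket is prescribed on trace generators; and the antisymmetry of the $\mathrm{H}_0$-bracket is exactly what makes the extension well defined in the slot where the derivation property is not available for free. The K\"ahler-differential packaging is a clean, equivalent way of organizing the check-on-generators-and-relations step, and your Jacobi and uniqueness arguments are standard and correct (the Jacobiator of an antisymmetric biderivation is a derivation in each argument, so it suffices to test it on traces).

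One imprecision should be repaired. An $\mathrm{H}_0$-Poisson structure determines the derivation $d_a$ not merely modulo inner derivations, but modulo \emph{all} derivations $\delta$ with $\delta(A)\subseteq[A,A]$ (inner derivations form only a subspace of these). Your well-definedness argument for $D_{\overline a}=X_{d_a}|_{B^G}$, which identifies inner derivations with the infinitesimal conjugation action, therefore covers only part of the ambiguity. The fix is immediate with tools already in your write-up: for any derivation $\delta$ with $\delta(A)\subseteq[A,A]$ one has $X_\delta(\mathrm{tr}\,b)=\mathrm{tr}(\delta b)=0$, since trace functions vanish on $[A,A]$; as $B^G$ is generated by traces and $X_\delta$ is a derivation, $X_\delta|_{B^G}=0$. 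With that adjustment the proof is complete.
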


\begin{proof}
See Crawley-Boevey \cite[Theorem 2.5]{CB}.
\end{proof}

For an associative algebra $A$, if
it admits a double Poisson bracket $\ldb-,-\rdb$, then from Van
den Bergh's result (Corollary \ref{cor_VdB}) one immediately obtains an $\mathrm H_0$-Poisson structure on $A$.
In particular, Van den Bergh (\cite{VdB}) showed that
the path algebra of a doubled quiver has a double Poisson structure,
which then induces a Poisson structure on the representation scheme of this doubled quiver,
and hence recovers an important result of Ginzburg \cite{Gin} and
Bocklandt-Le Bruyn \cite{BL}.

The work of Crawley-Boevey and Van den Bergh cited above was later
further studied in \cite{BCER} based on the work \cite{BKR},
where DG algebras and DG representations are studied.
The current work may be viewed as a continuation of \cite{BCER},
with an aim to the understanding of some algebraic structures in symplectic topology.
At present, we are not able to describe the representation theory of a general Fukaya category.
Nevertheless, from the work of \cite{BS} and \cite{Seidel2}, one sees that for some special class of symplectic manifolds,
the sub-category of vanishing cycles is very much related to the representation theory of the associated quivers.
We hope to turn to this point in the near future.

\subsection{Example of cotangent bundles}

In this subsection, we compare the previous results with string topology in the case
of cotangent bundles.
Suppose $N$ is a smooth $d$-manifold, and denote by $LN$ the free
loop space of $N$. In \cite{CS1} Chas and Sullivan showed
that the $S^1$-equivariant homology $\mathrm{H}_\bullet^{S^1}(LN)$
of $LN$ has the structure of a degree $2-d$ Lie algebra structure; later in \cite{CS2} they further show that,
by modulo the constant loops, $\mathrm{H}_\bullet^{S^1}(LN, N)$ is in fact an involutive Lie bialgebra.
Ever since its first appearance,
a lot of efforts have been made by mathematicians
in trying to understand such Lie (bi)algebra structure.
In the following we briefly
show that the Lie algebra that we obtained in the previous subsection
is very much similar to that of Chas and Sullivan.

First, we recall the definition
of A$_\infty$ functors and their properties; a much complete treatment of this topic can also be found in
\cite{Hasegawa,Seidel}.

\begin{definition}[A$_\infty$ functor; {\it cf.} \cite{Seidel} \S1b]
Suppose
$\mathcal A $ and $\mathcal B$ are two A$_\infty$ categories.
An A$_\infty$ functor $F:\mathcal A \to\mathcal B$ consists
of a map $F:\mathrm{Ob}(\mathcal A )\to\mathrm{Ob}(\mathcal B)$
and a sequence of multilinear maps
$$F^n: \ov{\mathrm{Hom}}_{\mathcal A }(A_{n}, A_{n+1})\otimes\cdots\otimes
\ov{\mathrm{Hom}}_{\mathcal A }(A_1, A_2)\to
\ov{\mathrm{Hom}}_{\mathcal B}(F(A_1), F(A_{n+1}))$$
of degree $0$ such that
\begin{multline}\label{Ainfty_functor}
\sum_{r}\sum_{s_1,s_2,\cdots, s_r}\ov m^{\mathcal B}_r
(F^{s_r}(a_n,\cdots, a_{n-s_r+1}),\cdots, F^{s_1}(a_{s_1},\cdots, a_1)) \\
=\sum_{p,q}(-1)^{|a_1|+\cdots+|a_{p}|}
 F^{n-q+1}(a_n, \cdots, a_{p+q+1},\ov m^{\mathcal A }_{q}(a_{p+q},\cdots, a_{p+1}), a_p,\cdots, a_1),
\end{multline}
where on the left hand side the sum is over all $r\ge 1$ and all partitions $s_1+\cdots+s_r=n$.
\end{definition}

In other words, equation (\ref{Ainfty_functor}) can be read as
a DG coalgebra map on the (reduced) bar constructions:
\begin{equation}\label{coalgebra_map}
B(F): \tilde B(\mathcal A )\to \tilde B(\mathcal B).
\end{equation}
If $F:\mathcal A\to\mathcal B$ is an A$_\infty$ functor, then one can associate
a functor $\mathrm H(F): \mathrm H(\mathcal A)\to\mathrm H(\mathcal B)$, called the {\it cohomology level functor},
sending objects of $\mathrm H(\mathcal A)$
to objects of $\mathrm H(\mathcal B)$ as that of $F$,
and sending $[a]\in \mathrm H_\bullet(\mathrm{Hom}(A,B), m_1^{\mathcal A})$
to $[F^1(a)]\in\mathrm H_\bullet(\mathrm{Hom}(F(A),F(B)), m_1^{\mathcal B})$.

Now suppose $\mathcal A$ and $\mathcal B$ are cohomologically unital.
An A$_\infty$ functor $F:\mathcal A\to\mathcal B$ is called a {\it quasi-equivalence} if
the associated cohomology level functor $\mathrm H(F)$ is an equivalence; it is a {\it quasi-isomorphism} if
$\mathrm H(F)$ is an isomorphism.
For the Fukaya category
of cotangent bundles (Example \ref{ex_cot}),
the following result is obtained by Fukaya-Seidel-Smith and Nadler independently:

\begin{theorem}[Fukaya-Seidel-Smith and Nadler]\label{FSSN}
Let $N$ be a simply-connected, compact spin manifold, and let $T^*N$ be its cotangent bundle.
Then there is a quasi-equivalence
of A$_\infty$ categories
$$\Phi: \mathrm{Fuk}(T^*N)
\to\mathrm{Hom}(N, N),
$$
where the latter is the sub Fukaya category of $T^*N$ with one object $N$
(that is, the Floer cochain complex of $N$).
\end{theorem}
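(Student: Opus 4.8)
The plan is to prove that $\Phi$ is a quasi-equivalence by reducing the statement, via the general theory of twisted complexes, to a \emph{generation} result for the zero section, and then to supply the geometric input that makes generation work. By definition $\Phi$ is a quasi-equivalence precisely when the cohomology-level functor $\mathrm H(\Phi)$ is an equivalence of the associated cohomologically unital categories; since the morphism spaces are only Morita-invariant data as far as Corollary \ref{main_cor} is concerned, it suffices to produce an equivalence in the derived (split-closed) sense. As $\mathrm{Hom}(N,N)$ is the full A$_\infty$ subcategory of $\mathrm{Fuk}(T^*N)$ on the single object $N$, full faithfulness of $\mathrm H(\Phi)$ on that subcategory is automatic, so the entire content is essential surjectivity after passing to the triangulated closure. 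Concretely, I would first replace both sides by their categories of twisted complexes $\mathrm{Tw}(-)$ and observe that the Yoneda embedding identifies $\mathrm{Tw}(\mathrm{Hom}(N,N))$ with the full subcategory of $\mathrm{Tw}(\mathrm{Fuk}(T^*N))$ split-generated by $N$. Thus the theorem is equivalent to the assertion that the zero section $N$ split-generates $\mathrm{Fuk}(T^*N)$: every admissible exact Lagrangian is quasi-isomorphic to an iterated cone on shifted copies of $N$.

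Second, I would identify the endomorphism algebra of the generator. The self-Floer cohomology of the zero section is $HF^\bullet(N,N)\cong\mathrm H^\bullet(N)$, by the Floer--Po\'zniak computation for a Lagrangian meeting itself cleanly along all of $N$, and the higher products are governed by the Morse-theoretic Massey-product structure, so that as an A$_\infty$ algebra $\mathrm{Hom}(N,N)$ is quasi-isomorphic to the cochain algebra $C^\bullet(N)$. Under the equivalence this matches $\mathrm{Fuk}(T^*N)$ with the perfect modules over $C^\bullet(N)$, which is exactly the identification needed to compare $\mathrm{HC}^\bullet(\mathrm{Fuk}(T^*N))$ with the $S^1$-equivariant homology of $LN$ in the spirit of Chas--Sullivan.

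The hard part will be the generation statement itself, which is the true geometric core and the reason the hypotheses ``$N$ simply connected and spin'' are imposed. I see two routes, matching the two groups credited in the statement. The Fukaya--Seidel--Smith route establishes an a priori rigidity for closed exact Lagrangians $L\subset T^*N$: using simple-connectivity and the spin condition to kill the Maslov class and fix a $\mathbb Z$-grading, one shows that the projection $L\to N$ is Floer-theoretically a homology equivalence and that $HF^\bullet(N,L)$ is as large as possible, forcing $L$ to be isomorphic in $\mathrm{Fuk}(T^*N)$ to a twisted complex on copies of $N$; generation then follows together with an Abouzaid-type open--closed/generation criterion. The Nadler route instead constructs a microlocalization functor from a suitable model of $\mathrm{Fuk}(T^*N)$ to the category of constructible sheaves on $N$, carrying the zero section to the constant sheaf with endomorphism algebra $C^\bullet(N)$, and proves it is an equivalence; generation by $N$ is inherited from the fact that the constant sheaf generates constructible sheaves on a simply connected base. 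Either way, the genuinely difficult steps---the a priori control of exact Lagrangians and the verification of the generation criterion---lie well outside the elementary A$_\infty$ algebra of Section \ref{Sect_AinftyDPA} and must be imported from \cite{Seidel} and the cited works; the remaining bookkeeping (compatibility of gradings, coherent orientations over a characteristic-zero field, and transfer of the A$_\infty$ structure across the quasi-isomorphism) is routine by comparison.
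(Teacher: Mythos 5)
Your proposal follows essentially the same route as the paper, which offers no argument of its own for Theorem \ref{FSSN} but simply cites Fukaya--Seidel--Smith \cite[Theorem 1]{FSS} and Nadler \cite[Theorem 1.3.1]{Nadler}; you likewise (and correctly) reduce the statement to split-generation of $\mathrm{Fuk}(T^*N)$ by the zero section together with the identification of its endomorphism algebra, and then import the genuinely hard geometric input from those same two references. The one quibble is your closing claim that ``the constant sheaf generates constructible sheaves on a simply connected base'' --- that is false as stated (skyscrapers are constructible but not built from the constant sheaf); what Nadler's argument actually uses is that the microlocalization of a \emph{closed} exact brane has singular support in the zero section, hence is a local system, hence by simple connectivity a sum of shifts of the constant sheaf --- but this imprecision is not load-bearing.
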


\begin{proof}
See Fukaya-Seidel-Smith \cite[Theorem 1]{FSS}
and Nadler \cite[Theorem 1.3.1]{Nadler}.
A further discussion can be found in Fukaya-Seidel-Smith (\cite{FSS2}).
\end{proof}

On the other hand, we have the following theorem, which is usually called the PSS (Piunikhin-Salamon-Schwarz) isomorphism
in literature.

\begin{theorem}[PSS isomorphism]\label{thm_PSS}
Let $N$ be as in previous theorem. Then
the Floer cochain complex $\mathrm{Hom} (N, N)$
is quasi-isomorphic to the singular cochain complex $C^\bullet (N)$ as A$_\infty$ algebras.
\end{theorem}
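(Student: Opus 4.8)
The plan is to construct an explicit A$_\infty$ quasi-isomorphism by interpolating between the Floer-theoretic data on $T^*N$ and a Morse-theoretic model for $C^\bullet(N)$, following the Piunikhin-Salamon-Schwarz philosophy and its Lagrangian refinement (as in Abbondandolo-Schwarz and Fukaya). The key structural feature that makes everything go through is that $T^*N$ is exact: by Stokes' theorem the symplectic area of any disk with boundary on the exact zero section $N$ is controlled by the actions of its asymptotic chords, so there is neither disk nor sphere bubbling, and all the moduli spaces below are compact up to broken trajectories. Since $N$ is simply connected and spin, the Maslov and orientation obstructions vanish, so the Floer package is $\mathbb{Z}$-graded and defined over $k$, matching the setup of Theorem \ref{FSSN}.

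First I would fix a Morse function $f:N\to\R$ with a generic metric and arrange the Hamiltonian perturbation defining $\mathrm{Hom}(N,N)$ so that the Hamiltonian chords of the zero section are in bijection with the critical points of $f$, with gradings equal to the Morse indices. This is the local-model computation near the zero section, where a $C^2$-small Hamiltonian lifted from $f$ degenerates the chord equation to the gradient equation of $f$; it identifies the underlying graded space of $\mathrm{Hom}(N,N)$ with the Morse cochain complex $\mathrm{CM}^\bullet(f)$, which computes $H^\bullet(N)$. Next I would build the comparison morphism
$$
\Psi=\{\Psi^n\}:\ \mathrm{Hom}(N,N)\longrightarrow \mathrm{CM}^\bullet(f)
$$
by counting moduli spaces of \emph{mixed} configurations: pseudo-holomorphic disks with boundary on $N$ carrying several boundary marked points, to which half-infinite gradient flow lines of $f$ are attached. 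The A$_\infty$ relations for $\Psi$ would then follow from the codimension-one boundary analysis of the one-dimensional such moduli spaces, whose strata are either broken gradient trees (producing the Morse products on the target) or disk degenerations of the Fukaya type (\ref{Fukaya-boundary}) (producing the $\ov m_n$ on the source).

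That $\Psi^1$ is a quasi-isomorphism is the classical fact that the Lagrangian Floer cohomology of the zero section is $\mathrm{HF}^\bullet(N,N)\cong H^\bullet(N)$: one checks that $\Psi^1$ intertwines the two differentials and is the identity on cohomology in the adiabatic limit $f\to 0$. Finally, to reach $C^\bullet(N)$ itself I would compose with a standard A$_\infty$ quasi-isomorphism from the Morse A$_\infty$ algebra of gradient trees to the DG algebra $(C^\bullet(N),\cup)$, whose existence is part of Morse homotopy theory. Composing quasi-isomorphisms yields, via the definition in the excerpt (an A$_\infty$ morphism inducing an isomorphism of $m_1$-cohomologies), the desired quasi-isomorphism of A$_\infty$ algebras.

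The hard part will be the analytic foundations for the mixed moduli spaces: establishing transversality for the combined disk-plus-gradient-tree problem, proving the gluing theorem identifying their codimension-one boundary with the expected products, and fixing a coherent system of orientations so that all signs agree with those assigned in Definition \ref{def_Ainfty} and with the Koszul signs on the target. In the exact setting of cotangent bundles these steps are by now standard but technically heavy; in practice I would invoke the Fukaya-Seidel-Smith and Nadler framework together with the chain-level Abbondandolo-Schwarz isomorphism to bypass the most delicate gluing, thereby reducing the A$_\infty$-level statement to the already established identification of $\mathrm{HF}^\bullet(N,N)$ with $H^\bullet(N)$.
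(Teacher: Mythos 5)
The paper does not actually prove Theorem \ref{thm_PSS}: its ``proof'' is a pointer to Abouzaid \cite[Theorem 1.1]{Ab09}, where the statement is established for an arbitrary closed smooth manifold. Your proposal reconstructs the standard strategy behind that and related results (Piunikhin--Salamon--Schwarz, Abbondandolo--Schwarz, Fukaya's Morse homotopy theory): identify the generators of $\mathrm{Hom}(N,N)$ with critical points of a Morse function, build the components $\Psi^n$ by counting disks with gradient half-lines attached, and extract the A$_\infty$ functor equations (\ref{Ainfty_functor}) from the codimension-one boundary of the mixed moduli spaces. This is the right outline and is essentially the route the cited literature takes, so in substance you and the paper end up relying on the same external input.

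One caution about your closing paragraph: the A$_\infty$-level statement cannot be ``reduced to the already established identification of $\mathrm{HF}^\bullet(N,N)$ with $H^\bullet(N)$.'' That cohomology-level isomorphism only tells you that a correctly constructed $\Psi^1$ is a quasi-isomorphism; it says nothing about the existence of the higher components $\Psi^n$ satisfying (\ref{Ainfty_functor}), which is where all the content --- and all the transversality, gluing, and coherent-orientation work --- actually lives. An isomorphism of graded algebras on cohomology does not in general upgrade to an A$_\infty$ quasi-isomorphism; detecting the failure of such upgrades is precisely what the higher products are for. So either you carry out the mixed-moduli-space analysis in full, or you cite a reference that establishes the comparison at the chain level with all higher operations, which is what \cite{Ab09} provides and what the paper in effect does.
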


\begin{proof}
There have been several proofs for this result; see, for example, Abouzaid \cite[Theorem 1.1]{Ab09}.
In fact, in \cite{Ab09} the theorem is proved for any compact smooth manifold $N$.
\end{proof}

As a corollary to the above two theorems, we have the following result.

\begin{corollary}
Let $N$ be a simply-connected, compact spin $d$-manifold.
Then the cyclic cohomology of $\mathrm{Fuk}(T^*N)$ is isomorphic
to the cyclic cohomology of $C^\bullet(N)$, which induces on the latter a degree $2-d$ Lie algebra structure.
\end{corollary}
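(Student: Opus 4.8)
The plan is to chain together the two comparison results, Theorem \ref{FSSN} and Theorem \ref{thm_PSS}, into a sequence of A$_\infty$ quasi-equivalences linking $\mathrm{Fuk}(T^*N)$ with the singular cochain algebra $C^\bullet(N)$, to argue that every arrow in this sequence induces an isomorphism on cyclic cohomology, and finally to transport the Lie bracket furnished by Corollary \ref{main_cor} across the resulting isomorphism. Concretely, regarding $C^\bullet(N)$ as a one-object A$_\infty$ category, I would assemble the diagram
$$
\mathrm{Fuk}(T^*N)\xrightarrow{\ \Phi\ }\mathrm{Hom}(N,N)\xleftarrow{\ \sim\ }C^\bullet(N),
$$
in which $\Phi$ is the quasi-equivalence of Theorem \ref{FSSN} and the right-hand arrow is the A$_\infty$ quasi-isomorphism of Theorem \ref{thm_PSS}.

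Next I would promote each arrow to the level of the (dual) bar construction. An A$_\infty$ functor $F$ induces a DG coalgebra morphism $B(F)$ on reduced bar constructions as in \eqref{coalgebra_map}; dualizing yields a DG algebra morphism of the dual bar constructions, and passing to the topological commutator quotient produces a chain map on the cyclic cochain complexes, via the identification $\mathrm{CC}^\bullet(-)\simeq(\tilde B(-)^\vee)_\natural$ of Proposition \ref{Prop_Quillen}. Because $\Phi$ is a quasi-equivalence and the PSS map a quasi-isomorphism, their bar-construction morphisms are quasi-isomorphisms of DG (co)algebras; the point to establish is that the induced maps on cyclic cohomology are therefore isomorphisms.

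This last point is the main obstacle: the commutator quotient $(-)_\natural$ is not a priori exact, so quasi-invariance of cyclic cohomology under an A$_\infty$ quasi-equivalence must be argued rather than read off formally. I would handle it by reducing to the DG setting: since $\mathrm{Fuk}(T^*N)$, $\mathrm{Hom}(N,N)$ and $C^\bullet(N)$ are all (co)homologically unital, each is canonically quasi-isomorphic to a unital DG category in a way that carries its Hochschild and cyclic complexes to those of the DG model, as recalled after Definition \ref{def_cycl}; for unital DG categories the invariance of cyclic (co)homology under quasi-equivalence is standard. Chaining these isomorphisms gives
$$
\mathrm{HC}^\bullet(\mathrm{Fuk}(T^*N))\;\cong\;\mathrm{HC}^\bullet(\mathrm{Hom}(N,N))\;\cong\;\mathrm{HC}^\bullet(C^\bullet(N)).
$$
Finally, Corollary \ref{main_cor} endows $\mathrm{HC}^\bullet(\mathrm{Fuk}(T^*N))$ with a degree $2-d$ graded Lie algebra structure; since the statement only asks for an \emph{induced} structure on $\mathrm{HC}^\bullet(C^\bullet(N))$, I would simply transport the bracket along the above graded isomorphism, which automatically preserves both the degree and the Jacobi identity, completing the proof.
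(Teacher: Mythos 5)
Your proposal is correct and follows essentially the same route as the paper: chain Theorems \ref{FSSN} and \ref{thm_PSS}, invoke invariance of cyclic cohomology under A$_\infty$ quasi-equivalence (which the paper isolates as Lemma \ref{inv_cyclic} and proves exactly as you sketch, by passing to the unital DG models via the Yoneda embedding and then applying Keller's invariance theorem), and transport the bracket from Corollary \ref{main_cor}. The only detail you leave implicit is that the quasi-equivalence between the original A$_\infty$ categories descends to a quasi-equivalence of their DG models, which the paper supplies via the naturality of the Yoneda embedding (Lemma \ref{lemma_qi2}).
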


\begin{proof}
This corollary follows from Corollary \ref{main_cor}, Theorems \ref{FSSN}, \ref{thm_PSS}
and the following lemma saying that cyclic (co)homology is invariant under quasi-equivalences/quasi-isomorphisms.
\end{proof}

\begin{lemma}\label{inv_cyclic}
Cyclic cohomology is invariant under quasi-equivalences, that is, for two quasi-equivalent
A$_\infty$ categories $\mathcal A $ and $\mathcal B$, we have
$$
\mathrm{HC}^\bullet(\mathcal A )\cong\mathrm{HC}^\bullet(\mathcal B).
$$
\end{lemma}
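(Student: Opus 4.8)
The plan is to deduce the cohomological statement from its homological counterpart---where it is the by-now-standard derived invariance of cyclic homology---and then to dualize over $k$. A quasi-equivalence $F\colon\mathcal A\to\mathcal B$ induces, by (\ref{coalgebra_map}), a DG coalgebra map $B(F)\colon\tilde B(\mathcal A)\to\tilde B(\mathcal B)$. Being a (degree zero) morphism of coalgebras, $B(F)$ intertwines $\tilde\Delta-\sigma\circ\tilde\Delta$ on the two sides, hence carries the co-commutator subspace into the co-commutator subspace; so by Proposition \ref{Prop_Quillen} it restricts to a chain map $\mathrm{CC}_\bullet(\mathcal A)\to\mathrm{CC}_\bullet(\mathcal B)$ between the Connes cyclic chain complexes. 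Dually, $B(F)^\vee\colon\tilde B(\mathcal B)^\vee\to\tilde B(\mathcal A)^\vee$ is a DG algebra map, so it descends to the commutator quotients and yields a cochain map $\mathrm{CC}^\bullet(\mathcal B)\to\mathrm{CC}^\bullet(\mathcal A)$. Unwinding Definition \ref{def_cycl}, a cochain is cyclically invariant precisely when it annihilates $\mathrm{im}(1-T)$, so $\mathrm{CC}^\bullet(\mathcal A)=\mathrm{Hom}\bigl(\mathrm{CH}_\bullet(\mathcal A)/(1-T),k\bigr)=\bigl(\mathrm{CC}_\bullet(\mathcal A)\bigr)^\vee$, and likewise for $\mathcal B$, with $B(F)^\vee$ the $k$-linear dual of the chain map above. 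Since $\mathrm{Hom}(-,k)$ is exact over the field $k$ and thus takes quasi-isomorphisms to quasi-isomorphisms, it suffices to prove that $B(F)$ induces a quasi-isomorphism $\mathrm{CC}_\bullet(\mathcal A)\to\mathrm{CC}_\bullet(\mathcal B)$.

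Next I would reduce this to the invariance of Hochschild homology. Using Proposition \ref{Prop_Quillen} one computes $\mathrm{CC}_\bullet$ on the co-commutator subspace $(\tilde B)^\natural=\mathrm{Ker}(1-T)$, which is graded by weight and so carries the increasing weight filtration. Each component $F^{s}$ collapses $s$ letters to one, so $B(F)$ is weight-nonincreasing and preserves this filtration, reducing on the associated graded to the cyclically closed tensor powers of $F^1$ taken with respect to the internal differential $\ov m_1$. Equivalently, one may run the argument through Connes' long exact sequence relating $\mathrm{HH}_\bullet$ and $\mathrm{HC}_\bullet$, which is furnished by the cyclic bicomplex of Lemma \ref{cyclic_sum} and is natural in $\mathcal A$: a five-lemma induction on degree then shows that a quasi-isomorphism of Hochschild chains induces one of cyclic chains. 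Either route reduces the problem to the invariance of the Hochschild complex under $F$.

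Finally, to treat the Hochschild complex I would invoke the Remark following Definition \ref{def_cycl} to replace $\mathcal A$ and $\mathcal B$ by the quasi-isomorphic unital DG categories $\mathcal I\mathcal A$ and $\mathcal I\mathcal B$, under which the Hochschild and cyclic complexes are preserved and the quasi-equivalence descends; the invariance of Hochschild, hence cyclic, homology under quasi-equivalences of DG categories is Keller's theorem \cite{Keller2,Keller}, which I would cite directly. The hard part is concentrated exactly here. A quasi-equivalence is only essentially surjective up to cohomology isomorphism, not bijective on objects, so the weight-filtration computation above---where $F^1$ is a quasi-isomorphism on each morphism complex and $(F^1)^{\otimes p}$ is one by K\"unneth---produces classes only from cyclic words built out of objects in the image of $F$, and does not \emph{a priori} account for the classes of $\tilde B(\mathcal B)$ supported on the remaining objects of $\mathcal B$. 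This Morita-type (derived-invariance) phenomenon is precisely what Keller's argument supplies. In the special case that $F$ is a quasi-\emph{isomorphism} (bijective on objects), no object is left out: $B(F)$ is then a filtered quasi-isomorphism by the K\"unneth computation on the associated graded, and the spectral sequence of the weight filtration---equivalently the five-lemma argument above---shows $B(F)$ is a quasi-isomorphism on cyclic chains, completing the proof in that case.
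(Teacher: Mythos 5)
Your proof is correct, and it shares the paper's overall skeleton: both arguments pass through the Yoneda embedding to replace $\mathcal A$ and $\mathcal B$ by the unital DG categories $\mathcal{IA}$ and $\mathcal{IB}$, and both delegate the genuinely Morita-theoretic content --- the fact that a quasi-equivalence need only be essentially surjective on objects --- to Keller's invariance theorem for DG categories \cite{Keller2,Keller}. Where you diverge is in the comparison of an A$_\infty$ category with its DG replacement. The paper notes that an A$_\infty$ quasi-isomorphism admits a homotopy inverse, so $B(\mathcal I)$ is a homotopy equivalence of DG coalgebras; dualizing gives a homotopy equivalence of DG algebras, and it then cites \cite[Lemma 3.1]{BKR} to conclude that homotopy equivalent DG algebras have quasi-isomorphic commutator quotients, which by Proposition \ref{Prop_Quillen} are exactly the cyclic cochain complexes. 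You instead work homologically, identify $\mathrm{CC}^\bullet$ with the $k$-linear dual of $\mathrm{CC}_\bullet$ (legitimate over a field), and prove the quasi-isomorphism case directly via the weight filtration on the co-commutator subspace: the associated graded of $B(F)$ is $(F^1)^{\otimes n}$ restricted to the $T$-invariants, a quasi-isomorphism by K\"unneth and by exactness of taking invariants of a finite cyclic group action in characteristic zero, and the filtration is exhaustive and bounded below so the comparison theorem applies. This buys you a more self-contained argument (no homotopy-inverse machinery, no appeal to \cite{BKR}) at the cost of a convergence check; of your two suggested routes for passing from Hochschild to cyclic invariance, the direct filtration on $\mathrm{Ker}(1-T)$ is preferable to the SBI sequence, since the strict naturality of $N$ and $1-T$ under an A$_\infty$ functor whose higher components collapse weight is not immediate. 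You also correctly isolate the one step neither argument can do by hand --- accounting for objects of $\mathcal B$ outside the image of $F$ --- which is precisely where the paper, too, invokes Keller.
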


To prove this lemma, let us first recall the Yoneda embedding for A$_\infty$ categories.
We start with A$_\infty$ modules ({\it cf.} \cite[\S1j]{Seidel}).

\begin{definition}[A$_\infty$ module]
Suppose $\mathcal C$ is an A$_\infty$ category.
A $\mathcal C$-module is an A$_\infty$ functor from $\mathcal C$ to the DG category of complexes over $k$,
where the latter is viewed as an A$_\infty$ category.
\end{definition}

Since the category of complexes over $k$ is a DG category, all $\mathcal C$-modules form
a DG category as well, which is called the module category of $\mathcal C$, and
is denoted by $\mathrm{mod}(\mathcal C)$.
Moreover,
there is a natural functor (called the {\it Yoneda embedding})
$$
\mathcal I:  \mathcal C \longrightarrow \mathrm{mod}(\mathcal C)
$$
which maps any object, say $Y$ in $\mathcal C$, to
the object $\mathrm{Hom}(-, Y)$ in $ \mathrm{mod}(\mathcal C )$,
and maps morphisms in $\mathcal C$ to morphisms in $\mathrm{mod}(\mathcal C)$ in
the natural way
({\it cf.} \cite[\S1l]{Seidel}).
Denote by $\mathcal{IC}$ the images of $\mathcal C$ under the Yoneda embedding,
then by some technical discussion on the unit, Seidel proves the following

\begin{lemma}[Seidel \cite{Seidel} Corollary 2.14]\label{lemma_qi1}
Any cohomologically unital A$_\infty$ category $\mathcal C$ is canonically quasi-isomorphic to the
strictly unital
DG category $\mathcal {IC}$ via the Yoneda embedding.
\end{lemma}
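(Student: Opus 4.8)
The plan is to show that the Yoneda embedding $\mathcal I\colon\mathcal C\to\mathrm{mod}(\mathcal C)$, viewed as a functor onto its image $\mathcal{IC}$, is a quasi-isomorphism, treating objects and morphisms separately. Write $\mathcal Y_X:=\mathcal I(X)=\mathrm{Hom}(-,X)$ for the Yoneda module of an object $X$. First I would record that $\mathcal{IC}$ is a strictly unital DG category: the DG category of complexes over $k$ is genuinely DG, so the module category $\mathrm{mod}(\mathcal C)$ of A$_\infty$ functors into it is again a DG category, whose morphism spaces are the complexes of pre-module-homomorphisms with composition and differential induced from $m_1$ and $m_2$ of the target, and each module carries a strict identity. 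The full subcategory $\mathcal{IC}$ on the objects $\{\mathcal Y_X\}$ inherits these strict units. By construction $\mathcal I$ is a bijection from $\mathrm{Ob}(\mathcal C)$ onto $\mathrm{Ob}(\mathcal{IC})$, so the cohomology-level functor $\mathrm H(\mathcal I)$ is already bijective on objects, and it remains only to analyse the morphism complexes.

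For the morphisms, the key tool is the A$_\infty$ Yoneda lemma: for any $\mathcal C$-module $M$ and any object $X$, evaluation of the leading component of a pre-module-homomorphism at a chosen unit of $X$ defines a quasi-isomorphism of complexes
$$
\mathrm{ev}_X\colon\mathrm{Hom}_{\mathrm{mod}(\mathcal C)}(\mathcal Y_X,M)\xrightarrow{\ \simeq\ }M(X).
$$
Specializing to $M=\mathcal Y_Y$ and using that $\mathcal{IC}$ is full gives a quasi-isomorphism
$$
\mathrm{Hom}_{\mathcal{IC}}(\mathcal Y_X,\mathcal Y_Y)\xrightarrow{\ \simeq\ }\mathcal Y_Y(X)=\mathrm{Hom}_{\mathcal C}(X,Y),
$$
and the linear term $\mathcal I^1$ of the Yoneda functor provides a one-sided inverse to $\mathrm{ev}_X$ on cohomology, since evaluating $\mathcal I^1(a)$ against the unit recovers $a$ up to $m_1$-homotopy. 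Hence $\mathcal I^1$ is itself a quasi-isomorphism on every morphism complex, so $\mathrm H(\mathcal I)$ is full and faithful; together with the bijection on objects this shows $\mathcal I\colon\mathcal C\to\mathcal{IC}$ is a quasi-isomorphism, as claimed.

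The hard part is the A$_\infty$ Yoneda lemma, and precisely the role of the unit in it. Without a unit there is no distinguished element of $M(X)$ against which to evaluate, and $\mathrm{ev}_X$ need not be a quasi-isomorphism. Since a Fukaya category is in general only cohomologically unital, I would first upgrade the chosen cohomology-level units to a coherent system of homotopy units, and then build the contracting homotopy witnessing that $\mathrm{ev}_X$ is a quasi-isomorphism by induction along the length (weight) filtration of the complex computing $\mathrm{Hom}_{\mathrm{mod}(\mathcal C)}(\mathcal Y_X,M)$, using the homotopy units to absorb the higher-order correction terms. This is the \emph{technical discussion on the unit} alluded to before the statement; it is what promotes the corestricted Yoneda embedding from a mere quasi-equivalence to an honest quasi-isomorphism onto the strictly unital model $\mathcal{IC}$. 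The construction is carried out in detail in Seidel \cite[\S1j, \S2]{Seidel}.
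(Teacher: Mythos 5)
The paper itself contains no proof of this lemma: it is quoted directly from Seidel \cite[Corollary 2.14]{Seidel}, with the surrounding text only alluding to ``some technical discussion on the unit.'' So the real comparison is with Seidel's argument, and your sketch reconstructs it correctly: $\mathrm{mod}(\mathcal C)$ is a strictly unital DG category because the target category of chain complexes is; the corestricted Yoneda embedding is bijective on objects by construction; and cohomological full faithfulness reduces to the A$_\infty$ Yoneda lemma, namely that evaluation against a cohomological unit gives a quasi-isomorphism $\mathrm{Hom}_{\mathrm{mod}(\mathcal C)}(\mathcal Y_X,M)\to M(X)$, with the linear term $\mathcal I^1$ inducing the inverse on cohomology when $M=\mathcal Y_Y$. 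This is exactly the structure of Seidel's \S 1j--\S 2g. The one place you deviate from the cited proof is the claim that one must first promote the cohomology-level units to a coherent system of homotopy units. Seidel's proof of the Yoneda lemma \cite[Lemma 2.12]{Seidel} avoids this step: filtering the pre-module-homomorphism complex by length, the resulting comparison (a spectral sequence whose first page is computed purely from $\mathrm H(\mathcal C)$-level data, and which converges because the morphism complex is a complete product along this filtration) only ever invokes the unit of the cohomological category $\mathrm H(\mathcal C)$, where the classical Yoneda lemma applies; so c-unitality is used as given, with no homotopy-unit machinery. Your route is also viable --- c-unital A$_\infty$ structures can be strictified by a formal diffeomorphism (\cite[\S 2a]{Seidel}; see also Lef\`evre-Hasegawa \cite{Hasegawa}) --- but it is strictly more apparatus than the cited argument needs. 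One last cosmetic point: for $\mathrm H(\mathcal I)$ to be an isomorphism onto $\mathcal{IC}$ (not merely an equivalence), the objects of $\mathcal{IC}$ should be indexed by $\mathrm{Ob}(\mathcal C)$ rather than by the bare set of Yoneda modules, since distinct objects could in principle have identical Yoneda images; this is the usual convention and affects nothing else in your argument.
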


Now suppose $F:\mathcal C\to\mathcal C'$ is an A$_\infty$ functor, it induces
a DG functor (the {\it pull-back functor})
$$F^*:\mathrm{mod}(\mathcal C') \to\mathrm{mod}(\mathcal C)$$
which maps
a $\mathcal C'$-module, say $M$, to a $\mathcal C$-module $F^*(M)$ given as follows: for any object
$X\in\mathrm{Ob}(\mathcal C)$, $F^*(M)(X)=M(F(X))$ (for more details see \cite[\S1k]{Seidel}).
Moreover, the Yoneda embedding is natural in the following sense:

\begin{lemma}[Seidel \cite{Seidel} Diagram 2.13]\label{lemma_qi2}
Suppose $F:\mathcal C\to\mathcal C'$ is a cohomologically full and faithful A$_\infty$ functor, then
the following diagram is commutative
$$
\xymatrixcolsep{4pc}
\xymatrix{
\mathcal C\ar[r]^-{F}\ar[d]^{\mathcal I}&\mathcal C'\ar[d]_{\mathcal I}\\
\mathrm{mod}(\mathcal C)&\ar[l]_{F^*}\mathrm{mod}(\mathcal C').
}$$
In particular, if $F$ is a quasi-equivalence, then $F^*: \mathcal{IC'}\to\mathcal{IC}$ is an quasi-equivalence
of DG categories.
\end{lemma}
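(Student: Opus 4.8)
The plan is to exhibit a \emph{natural quasi-isomorphism} between the two composite A$_\infty$ functors $\mathcal C\to\mathrm{mod}(\mathcal C)$ obtained by traversing the square. On one side stands the Yoneda embedding $\mathcal I$, sending an object $Y$ to the module $\mathrm{Hom}_{\mathcal C}(-,Y)$; on the other stands $F^*\circ\mathcal I\circ F$, sending $Y$ to the pulled-back module $X\mapsto\mathrm{Hom}_{\mathcal C'}(F(X),F(Y))$. Commutativity of the diagram is to be understood in the homotopy category, that is, these two functors are to be identified up to natural quasi-isomorphism; this is exactly what the ``in particular'' clause requires.

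First I would observe that the A$_\infty$ functor $F$ \emph{itself} supplies the comparison. For each object $Y$, define a morphism of $\mathcal C$-modules
$$t_Y:\ \mathrm{Hom}_{\mathcal C}(-,Y)\ \longrightarrow\ \mathrm{Hom}_{\mathcal C'}(F(-),F(Y))$$
whose $d$-th component sends $b\otimes a_{d-1}\otimes\cdots\otimes a_1$ to $F^{d}(b,a_{d-1},\cdots,a_1)$, where $b$ is the module element and the $a_i$ are composable morphisms of $\mathcal C$. The key point is that the defining relations (\ref{Ainfty_functor}) of an A$_\infty$ functor are \emph{precisely} the statement that $t_Y$ is a closed morphism of degree $0$ in $\mathrm{mod}(\mathcal C)$: the terms on the left-hand side of (\ref{Ainfty_functor}) reassemble into the module differential of the target applied after $t_Y$, while those on the right reassemble into $t_Y$ applied after the module differential of the source. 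Thus no new data need be invented -- the coherence of $F$ is the natural transformation.

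Next I would check that $t_Y$ is a quasi-isomorphism and that it is natural in $Y$. Its leading component $t_Y^{1}=F^{1}:\mathrm{Hom}_{\mathcal C}(X,Y)\to\mathrm{Hom}_{\mathcal C'}(F(X),F(Y))$ induces an isomorphism on $m_1$-cohomology for every $X$, by the hypothesis that $F$ is cohomologically full and faithful; a morphism of A$_\infty$ modules whose first-order term is an objectwise quasi-isomorphism is itself a quasi-isomorphism of modules, as one sees by filtering its mapping cone by tensor-length (weight) and running the associated spectral sequence, whose $E_1$-page is computed by $t_Y^1$ and hence vanishes. Naturality in $Y$ -- that the family $\{t_Y\}$ is a closed degree-$0$ element of the A$_\infty$ category of functors $\mathcal C\to\mathrm{mod}(\mathcal C)$ -- is again a formal consequence of the higher relations (\ref{Ainfty_functor}), so that $\mathcal I$ and $F^*\circ\mathcal I\circ F$ are identified in the homotopy category.

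The main obstacle is essentially bookkeeping: matching the Koszul signs and the partition sums in (\ref{Ainfty_functor}) against the module-homomorphism equations for $t_Y$, and then against the closedness equation in the functor category, all in Seidel's sign conventions; conceptually nothing beyond (\ref{Ainfty_functor}) is used. Finally, the ``in particular'' assertion follows by combining the established commutative square with Lemma \ref{lemma_qi1}: when $F$ is a quasi-equivalence, the two vertical Yoneda arrows are quasi-isomorphisms onto $\mathcal{IC}$ and $\mathcal{IC'}$ respectively, and the cohomological essential surjectivity of $F$ together with the commutativity forces $F^*:\mathcal{IC'}\to\mathcal{IC}$ to be essentially surjective as well as full and faithful on cohomology, i.e. a quasi-equivalence of DG categories.
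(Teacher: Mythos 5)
The paper contains no proof of this lemma at all --- it is quoted directly from Seidel's book (the discussion around Diagram (2.13)) --- so the only meaningful comparison is with Seidel's argument, and your sketch is essentially a reconstruction of it. Your central observation is exactly the crux of that argument: no new data is needed, because the A$_\infty$-functor equations (\ref{Ainfty_functor}), read with the first input playing the role of the module element, say precisely that the components $(t_Y)^d=F^d$ assemble into a closed degree-zero homomorphism of $\mathcal C$-modules $\mathrm{Hom}_{\mathcal C}(-,Y)\to\mathrm{Hom}_{\mathcal C'}(F(-),F(Y))$, and (with higher components likewise built from the $F^{d+\bullet}$) into a closed pre-natural transformation $\mathcal I\to F^*\circ\mathcal I\circ F$, so that the square commutes up to natural quasi-isomorphism. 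One simplification: since a quasi-isomorphism in $\mathrm{mod}(\mathcal C)$ is by definition a module homomorphism whose first component is an objectwise quasi-isomorphism, the hypothesis on $F$ makes $t_Y$ a quasi-isomorphism immediately; your length-filtration/spectral-sequence argument is needed only for the standard stronger fact that such morphisms become invertible in $\mathrm H^0(\mathrm{mod}(\mathcal C))$.

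Two caveats about your final paragraph, both inherited from the loose phrasing of the lemma itself. First, $F^*$ does not literally carry $\mathcal{IC'}$ into $\mathcal{IC}$: the object $F^*(\mathcal I(Y'))$ is the module $X\mapsto\mathrm{Hom}_{\mathcal C'}(F(X),Y')$, which is only quasi-isomorphic to a Yoneda module of $\mathcal C$ (via $t$ combined with the cohomological essential surjectivity of $F$), so the concluding quasi-equivalence $F^*:\mathcal{IC'}\to\mathcal{IC}$ must be read after closing $\mathcal{IC}$ up under quasi-isomorphism inside $\mathrm{mod}(\mathcal C)$, or as a zig-zag of quasi-equivalences; this weaker reading is all that the later application in Lemma \ref{inv_cyclic} (via Keller's invariance theorem) requires. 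Second, cohomological full-faithfulness of $F^*$ on the Yoneda images does not follow from the commutative square alone, which compares only objects: to compare morphism complexes one should invoke the module-level Yoneda lemma, $\mathrm H\big(\mathrm{hom}_{\mathrm{mod}(\mathcal C)}(\mathcal I(Y),M)\big)\cong\mathrm H(M(Y))$ (Seidel, Lemma 2.12), which identifies the cohomology of both Hom-complexes with $\mathrm H\big(\mathrm{Hom}_{\mathcal C'}(Y_1',Y_2')\big)$ compatibly with $F^*$. Both repairs are routine, so apart from these finishing touches your proposal is correct and follows the same route as the cited source.
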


\begin{proof}[Proof of Lemma \ref{inv_cyclic}]
The proof consists of two steps. The first step is to show that for any A$_\infty$ category $\mathcal C$,
the Yoneda embedding $\mathcal I:\mathcal C\to\mathcal {IC}$ of Lemma \ref{lemma_qi1} induces an isomorphism on
their cyclic (co)homology.
In fact, since $\mathcal I$ is an A$_\infty$ functor,
we have a map of DG coalgebras (see (\ref{Ainfty_functor}))
$$B(\mathcal I): \tilde B(\mathcal C){\to} \tilde B(\mathcal{IC}).$$
By a well known result which says any quasi-isomorphism of
A$_\infty$ categories has an inverse up to homotopy (for a proof
see \cite[Corollary 1.14]{Seidel}),
$B(\mathcal I)$ in fact has an inverse induced from that of $\mathcal I$,
and their compositions are homotopic to identity on each side.
This means we in fact have a homotopy equivalence of DG coalgebras
$$B(\mathcal I): \tilde B(\mathcal C)\stackrel{\simeq}{\to} \tilde B(\mathcal{IC}).$$
Dually, we obtain a homotopy equivalence of DG algebras
$$\tilde B(\mathcal{IC})^{\vee}\stackrel{\simeq}{\to}\tilde B(\mathcal C)^{\vee}.$$
Note that homotopy equivalent
DG algebras induce quasi-isomorphic chain complexes
on their commutator quotient spaces (for a proof
see \cite[Lemma 3.1]{BKR}), and since $\mathcal{IC}$ is
a strictly unital DG category (Lemma \ref{lemma_qi1}),
the commutator quotient space
is the cyclic cohomology of $\mathcal{IC}$ in the usual sense.
Thus we get an isomorphism
\begin{equation}\label{qi_1}
\mathrm{HC}^\bullet(\mathcal {IC})\stackrel{\cong}{\to}\mathrm{HC}^\bullet(\mathcal{C}).
\end{equation}
The second step is, assume $F: \mathcal A\to\mathcal B$ is a quasi-equivalence, then by Lemma \ref{lemma_qi2},
$F^*: \mathcal {IB}\to\mathcal{IA}$ is a quasi-equivalence of DG categories. A theorem of Keller
(see \cite[Theorem 1.5]{Keller}) says that for quasi-equivalent DG categories,
their periodic cyclic homology groups
(respectively negative cyclic, cyclic homology as well as cyclic cohomology groups)
are isomorphic. That is, we have
\begin{equation}\label{qi_2}
\mathrm{HC}(F^*):\mathrm{HC}^\bullet(\mathcal{IB})\cong\mathrm{HC}^\bullet(\mathcal{IA}).
\end{equation}
Combining (\ref{qi_1}) and (\ref{qi_2}) we obtain isomorphisms
\begin{equation*}\mathrm{HC}^\bullet(\mathcal A)\stackrel{(\ref{qi_1})}\cong\mathrm{HC}^\bullet(\mathcal{IA})
\stackrel{(\ref{qi_2})}\cong
\mathrm{HC}^\bullet(\mathcal {IB})\stackrel{(\ref{qi_1})}\cong
\mathrm{HC}^\bullet(\mathcal B).\qedhere\end{equation*}
\end{proof}

It is nowadays also well-known that the cyclic cohomology of $C^\bullet (N)$
is nothing but the $S^1$-equivariant homology of $LN$, where the $S^1$-action
is the rotation of loops:

\begin{theorem}[Jones]\label{thm_Jones}
Suppose $N$ is a simply-connected manifold. Let $LN$ be the free loop space of $N$. Then
we have the following isomorphism
$$\mathrm{HC}^\bullet (C^\bullet(N))\cong \mathrm{H}_\bullet^{S^1}(LN).$$
\end{theorem}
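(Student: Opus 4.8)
The plan is to follow Jones's original strategy: model the free loop space by a cyclic space, and then match the algebraic cyclic structure with the geometric rotation $S^1$-action. Writing $S^1$ as the minimal cyclic set with $k+1$ simplices in degree $k$, the mapping space $LN=\mathrm{Map}(S^1,N)$ is the totalization of the cyclic (in particular cosimplicial) space $[k]\mapsto N^{\times(k+1)}$, whose coface maps are built from the diagonal $N\to N\times N$ and whose cyclic operator permutes the $k+1$ factors. Under totalization this cyclic operator induces precisely the rotation action of $S^1$ on $LN$.

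First I would establish the non-equivariant isomorphism $\mathrm{HH}^\bullet(C^\bullet(N))\cong \mathrm{H}^\bullet(LN)$. Applying the singular cochain functor to the cosimplicial space above and using the K\"unneth isomorphism over $k$ identifies the resulting complex with the Hochschild cochain complex of the DG algebra $A:=C^\bullet(N)$. The associated totalization spectral sequence coincides with the Eilenberg--Moore spectral sequence of the pullback square $LN = N\times_{N\times N}N^I$ taken along the diagonal; here the hypothesis that $N$ is simply connected is exactly what guarantees its strong convergence to $\mathrm{H}^\bullet(LN)$.

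Next I would promote this to the equivariant level. Connes' cyclic operator on the Hochschild complex of $A$ matches, under the identification above, the simplicial shadow of the geometric rotation, so the full cyclic (that is, $(b,B)$-bicomplex) structure computing $\mathrm{HC}^\bullet(A)$ corresponds to the Borel construction for the $S^1$-action on $LN$. Invoking the general principle that, for a cyclic space, the cyclic homology of its associated cyclic $k$-module computes the $S^1$-equivariant homology of its realization (Goodwillie, Burghelea--Fiedorowicz, Jones), and dualizing over the characteristic-zero field $k$ using that $N$ is of finite type, one obtains $\mathrm{HC}^\bullet(C^\bullet(N))\cong \mathrm{H}_\bullet^{S^1}(LN)$.

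The main obstacle I anticipate is twofold. First, the convergence of the Eilenberg--Moore spectral sequence genuinely requires the simply-connectedness of $N$ (or at least a nilpotency hypothesis on $\pi_1$), so the argument does not extend verbatim to arbitrary $N$. Second, and more delicate, one must check that the algebraic cyclic structure is identified with the geometric $S^1$-action already at the chain level and not merely on homology: tracking the cyclic and extra degeneracy operators through the Eilenberg--Zilber equivalence, and verifying their compatibility with the Connes differential $B$, is the technical heart of the argument and is where essentially all of the careful operator- and sign-bookkeeping resides.
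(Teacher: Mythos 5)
Your outline is a faithful reconstruction of Jones's original argument (cyclic/cosimplicial model of $LN$, Eilenberg--Moore convergence via simple connectivity, matching of the Connes operator with loop rotation), which is exactly what the paper relies on: its proof of this theorem is simply the citation to Jones, Theorem~A. So you are taking essentially the same route as the paper's source, and your identification of the two delicate points (convergence and the chain-level compatibility of the cyclic structure with rotation) is accurate.
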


\begin{proof}See
Jones \cite[Theorem A]{Jones}.
\end{proof}

This means that, in the cotangent bundle case, if the base manifold is simply-connected, compact and spin, then
by Theorems \ref{FSSN}, \ref{thm_PSS} and \ref{thm_Jones} together with Lemma \ref{inv_cyclic},
the cyclic cohomology of the Fukaya category of closed Lagrangian submanifolds,
of the Floer cochain complex of the zero section, and of
the singular cochain complex of the zero section, are all isomorphic to the $S^1$-equivariant homology of the
free loop space of the base manifold.
By transporting the Lie algebra obtained in previous subsection to the equivariant homology,
we may summarize the above discussion
into the following theorem:

\begin{theorem}\label{Lie_freeloop}
Let $N$ be a simply-connected, compact spin $d$-manifold. Denote by $LN$ and $T^*N$ the free
loop space and the cotangent bundle of $N$ respectively.
Then the noncommutative Poisson structure on $\mathrm{Fuk}(T^*N)$ given by Theorem \ref{main_thm} induces
a degree $2-d$ Lie algebra on $\mathrm{H}_{\bullet}^{S^1}(LN)$.
\end{theorem}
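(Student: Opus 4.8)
The plan is to prove this statement purely by transport of structure, assembling the graded isomorphisms already assembled in the preceding corollary and then pushing forward the Lie bracket of Corollary \ref{main_cor}. Recall that Corollary \ref{main_cor} already equips $\mathrm{HC}^\bullet(\mathrm{Fuk}(T^*N))$ with a degree $2-d$ graded Lie bracket: it arises from the double Poisson bracket of Theorem \ref{main_thm2} via Quillen's identification $\mathrm{CC}^\bullet(\mathrm{Fuk}(T^*N))\simeq(\tilde B(\mathrm{Fuk}(T^*N))^{\vee})_\natural$ of Proposition \ref{Prop_Quillen} together with Van den Bergh's Corollary \ref{cor_VdB}. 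It therefore suffices to carry this bracket across a chain of isomorphisms of graded $k$-vector spaces terminating at $\mathrm{H}_\bullet^{S^1}(LN)$.

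First I would invoke Theorem \ref{FSSN}: the Fukaya--Seidel--Smith/Nadler quasi-equivalence $\Phi:\mathrm{Fuk}(T^*N)\to\mathrm{Hom}(N,N)$, combined with the invariance of cyclic cohomology under quasi-equivalences (Lemma \ref{inv_cyclic}), gives $\mathrm{HC}^\bullet(\mathrm{Fuk}(T^*N))\cong\mathrm{HC}^\bullet(\mathrm{Hom}(N,N))$. Next, the PSS quasi-isomorphism of A$_\infty$ algebras (Theorem \ref{thm_PSS}) and a second application of Lemma \ref{inv_cyclic} yield $\mathrm{HC}^\bullet(\mathrm{Hom}(N,N))\cong\mathrm{HC}^\bullet(C^\bullet(N))$. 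Finally, Jones's theorem (Theorem \ref{thm_Jones}) supplies the last link $\mathrm{HC}^\bullet(C^\bullet(N))\cong\mathrm{H}_\bullet^{S^1}(LN)$. Composing these three, I obtain a graded isomorphism
$$
\mathrm{HC}^\bullet(\mathrm{Fuk}(T^*N))\;\cong\;\mathrm{H}_\bullet^{S^1}(LN),
$$
and I define the bracket on $\mathrm{H}_\bullet^{S^1}(LN)$ to be the push-forward of the one from Corollary \ref{main_cor} under this composite. Since the source is a degree $2-d$ graded Lie algebra and each arrow is an isomorphism of graded vector spaces, the Jacobi identity and graded skew-symmetry are preserved, so the transported bracket is again a degree $2-d$ graded Lie algebra.

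The one point requiring care is the bookkeeping of gradings: I would verify that each isomorphism in the chain is degree-preserving under the conventions fixed in Definition \ref{def_cycl} and in Jones's normalization, so that the bracket on $\mathrm{H}_\bullet^{S^1}(LN)$ really has degree $2-d$ rather than a shift of it. This is exactly the content recorded in the corollary immediately preceding the statement, where $\mathrm{HC}^\bullet(\mathrm{Fuk}(T^*N))\cong\mathrm{HC}^\bullet(C^\bullet(N))$ is already used to place a degree $2-d$ Lie structure on $\mathrm{HC}^\bullet(C^\bullet(N))$. Everything else is formal transport of structure, so I do not expect a substantive obstacle beyond confirming that the quoted equivalences are the graded isomorphisms claimed; no new estimate or geometric input is needed here, since all the analytic and homological work was carried out in Theorem \ref{main_thm2}, Lemma \ref{inv_cyclic}, and the cited results of Fukaya--Seidel--Smith, Nadler, Abouzaid, and Jones.
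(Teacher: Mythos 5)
Your proposal is correct and follows essentially the same route as the paper: the paper likewise obtains the bracket from Corollary \ref{main_cor} and transports it across the chain $\mathrm{HC}^\bullet(\mathrm{Fuk}(T^*N))\cong\mathrm{HC}^\bullet(\mathrm{Hom}(N,N))\cong\mathrm{HC}^\bullet(C^\bullet(N))\cong\mathrm{H}_\bullet^{S^1}(LN)$ furnished by Theorems \ref{FSSN}, \ref{thm_PSS}, \ref{thm_Jones} and Lemma \ref{inv_cyclic}. No substantive difference.
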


Now let us go back to string topology. The construction of Chas-Sullivan is sketched as follows:
for two homology classes, say $[\alpha]$ and $[\beta]\in\mathrm{H}_\bullet^{S^1}(LN)$,
suppose they are represented by $\alpha$ and $\beta$,
then we may view them as two families of loops forgetting the marked points (note
that the loops have natural marked points given by their starting points).
Now assume $\alpha$ and $\beta$ are transversal to each other;
to get a Lie bracket on $\alpha$ and $\beta$, we first equip them with the marked points in all possible ways,
which are parametrized by two $S^1$. Then for any $t_1, t_2\in S^1$, consider the intersection of
the marked loci of $\alpha$ at $t_1$ and $\beta$ at $t_2$
and form a new family of loops over the common loci which are the concatenation of loops from
$\alpha$ and $\beta$. As $t_1$ and $t_2$ vary in $S^1$, we in fact get a $(2-d)$ dimensional
family of loops in $LN$. By forgetting the marked points of this new chain,
Chas and Sullivan proved in \cite{CS1,CS2} that it represents a homology class
in $\mathrm H_\bullet^{S^1}(LN)$, which this the so-called Chas-Sullivan string Lie bracket of
$[\alpha]$ and $[\beta]$ (See \cite{CS1,CS2} for more details).

Chas and Sullivan's construction is partially inspired by the work of Goldman \cite{Goldman}.
In this work, Goldman proved that the space spanned by the free homotopy classes of loops,
modulo the constant ones,
forms a Lie algebra, where the Lie bracket is exactly the same as described above. It turns out that the Goldman
Lie bracket is very much similar to the Kontsevich bracket (see \cite{BL,Gin,VdB}). Since the
Lie bracket on the cyclic cohomology of the Fukaya category and hence
on the $S^1$-equivariant homology of $LN$
is directly inspired by Kontsevich, in this sense we may say that the Lie algebra given in Theorem \ref{Lie_freeloop}
is also similar to the one of Chas and Sullivan.

\begin{appendix}

\section{Proof of Lemma \ref{cyclic_sum}}

In this appendix, we prove Lemma \ref{cyclic_sum}, that is, to show
the commutativity of Diagram \eqref{commdiag_cmpx}.
Since we work over a field $k$ of characteristic zero, the horizontal sequences
are exact. The remaining
proof consists of the following two propositions.

\begin{proposition}$b'N=Nb$.
\end{proposition}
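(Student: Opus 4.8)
The plan is to reduce $b'N = Nb$ to a commutation relation between the cyclic operator $t$ and the interior differential $b'$, in exact parallel with the classical cyclic bicomplex identities (\cite[\S2.1.2]{Loday}), but keeping careful track of the higher operations $\ov m_i$ and their signs. Since the Hochschild differential is defined as $b = b' + b''$ in Definition \ref{def_hoch}, and $N$ is linear, we have $Nb = Nb' + Nb''$ for free, so the assertion is equivalent to the commutator identity $b'N - Nb' = Nb''$. The starting point is to view both $b'$ and $b''$, in a fixed weight $n$, as sums of elementary operators indexed by an arity $i$ and a position: the terms of $b'$ apply $\ov m_i$ to a block of consecutive entries that does \emph{not} cross the cyclic gap between $\ov a_1$ and $\ov a_{n+1}$, whereas the terms of $b''$ apply $\ov m_i$ to a block that \emph{wraps around}, using a suffix of the entries together with a prefix. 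Thus $b''$ is the A$_\infty$ generalization of the single extra ``wrap-around'' face in the associative picture.

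First I would record the basic interaction of a single rotation $t_n$ with these elementary operators. Conjugating an interior application of $\ov m_i$ by $t_n$ merely shifts its position by one, whereas an interior application that abuts the front or the back of the word is turned into a wrap-around application under rotation, and conversely. This is the A$_\infty$ analogue of the classical relations $d_i t = t d_{i-1}$ and $d_0 t = d_n$. The precise statement I would isolate as a lemma is that the failure of $b'$ to commute with a power $t^j$ is governed entirely by wrap-around terms: each interior piece that is pushed past the cyclic gap reappears as a term of $b''$ for a suitably rotated argument.

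Granting these relations, the identity follows by summing over the full cyclic group. On the right, $Nb = Nb' + Nb''$ collects, over all rotations $t^0, \ldots, t^n$, \emph{every} application of each $\ov m_i$, interior and wrap-around alike. On the left, $b'N$ collects, over all rotations, only the interior applications, but post-composed with the symmetrization $N$. Using the commutation relations to rewrite each wrap-around contribution appearing in $Nb''$ as an interior contribution for a different rotation, the two sums are seen to consist of exactly the same elementary terms, each counted once, so that $b'N - Nb' = Nb''$, hence $b'N = Nb$. This telescoping is precisely the mechanism producing $Nb = b'N$ in the associative case: the role of $N$ is to erase the distinction between the interior differential $b'$ and the cyclic wrap-around part $b''$.

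The main obstacle is entirely in the signs. The operator $t_n$ carries the Koszul sign $(-1)^{|\ov a_1|(\sum_l |\ov a_l| - |\ov a_1|)}$ of equation (\ref{def_t}), while $b'$ and $b''$ carry the signs $\varepsilon_k$ and $\nu_{ij}$ of Definition \ref{def_hoch}, each a sum of products of degrees depending on where $\ov m_i$ lands. When a wrap-around term for the rotation $t^j$ is matched with an interior term for a different rotation, one must check that the accumulated signs — those coming from $N$, from moving entries across the cyclic gap, and from the internal degree of $\ov m_i$ — agree on the two sides, uniformly in the arity $i$, the rotation index, and the position, and for arbitrarily many higher operations. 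This sign bookkeeping, rather than the combinatorial skeleton (which is classical), is the delicate part that forces the verification into the appendix.
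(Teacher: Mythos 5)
Your proposal follows essentially the same route as the paper's appendix: both decompose the terms of $b'N$ according to whether the block on which $\ov m_i$ acts crosses the original cyclic gap between $\ov a_1$ and $\ov a_{n+1}$, matching the non-crossing terms with $Nb'$ and the crossing (wrap-around) terms with $Nb''$, term by term over all rotations. The combinatorial skeleton and the counting are correct; what you defer --- the uniform sign verification, which hinges on facts such as $\bigl(\sum_l |\ov a_l|\bigr)^2-\sum_l |\ov a_l|^2$ being even --- is precisely the content the paper writes out explicitly, and it does go through.
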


\begin{proof}
We consider the action of both sides on the element $(\ov a_{n+1},\cdots \ov a_1)$.
There are two types of summands in $b'N$.
The first one are those terms whose indices appearing in $\ov m_i(\cdots)$ are in decreasing order,
and the second one are the rest; we show they are equal to $Nb'$ and $Nb''$ respectively.

In fact,
for fixed $i\geq 1$ and $k\geq 1$, $N(\ov a_{n+1},\cdots,\ov a_1)=N_n(\ov a_{n+1},\cdots,\ov a_{1})$, which is equal to
\begin{eqnarray*}
& & (\ov a_{n+1},\ov a_n,\ov a_{n-1},\cdots,\ov a_{k+i},\ov a_{k+i-1},\cdots, \ov a_k, \ov a_{k-1},\cdots, \ov a_1)\\
&+& (-1)^{|\ov a_1|(\sum_{l=1}^{n+1}|\ov a_l|-|\ov a_1|)}
(\ov a_1,\ov a_{n+1}, \ov a_n,\cdots, \ov a_{k+i},\cdots, \ov a_k,\ov a_{k-1},\cdots, \ov a_2)\\
&+& \cdots\\
&+& (-1)^{|\ov a_1|(\sum_{l=1}^{n+1}|\ov a_l|-|\ov a_1|)+\cdots+|\ov a_{k-2}|
(\sum_{l=1}^{n+1}|\ov a_l|-|\ov a_{k-2}|)}
(\ov a_{k-2},\cdots,\ov a_{1}, \ov a_{n+1},\ov a_{n},\cdots, \ov a_{k+i}, \cdots, \ov a_k,\ov a_{k-1})\\
&+& (-1)^{|\ov a_1|(\sum_{l=1}^{n+1}|\ov a_l|-|\ov a_1|)+\cdots+|\ov a_{k-1}|(\sum_{l=1}^{n+1}|\ov a_l|-|\ov a_{k-1}|)}
(\ov a_{k-1},\cdots,\ov a_{1}, \ov a_{n+1},\ov a_{n},\cdots, \ov a_{k+i},\ov a_{k+i-1},\cdots, \ov a_k)\\
&+& (-1)^{|\ov a_1|(\sum_{l=1}^{n+1}|\ov a_l|-|\ov a_1|)+\cdots+|\ov a_{k}|(\sum_{l=1}^{n+1}|\ov a_l|-|\ov a_{k}|)}
(\ov a_{k},\cdots,\ov a_{1}, \ov a_{n+1},\ov a_{n},\cdots, \ov a_{k+i},\ov a_{k+i-1},\cdots, \ov a_{k+1})\\
&+&\cdots\\
&+& (-1)^{|\ov a_1|(\sum_{l=1}^{n+1}|\ov a_l|-|\ov a_1|)+\cdots+|\ov a_{k+i-2}|(\sum_{l=1}^{n+1}|\ov a_l|-|\ov a_{k+i-2}|)}
(\ov a_{k+i-2},\cdots,\ov a_{k},\cdots, \ov a_1,\ov a_{n+1},\cdots, \ov a_{k+i-1})\\
&+& (-1)^{|\ov a_1|(\sum_{l=1}^{n+1}|\ov a_l|-|\ov a_1|)+\cdots+|\ov a_{k+i-1}|(\sum_{l=1}^{n+1}|\ov a_l|-|\ov a_{k+i-1}|)}
(\ov a_{k+i-1},\cdots, \ov a_{k},\ov a_{k-1},\cdots, \ov a_1,\ov a_{n+1},\cdots, \ov a_{k+i})\\
&+& (-1)^{|\ov a_1|(\sum_{l=1}^{n+1}|\ov a_l|-|\ov a_1|)+\cdots+|\ov a_{k+i}|(\sum_{l=1}^{n+1}|\ov a_l|-|\ov a_{k+i}|)}
(\ov a_{k+i}, \cdots, \ov a_k,\ov a_{k-1},\cdots, \ov a_1,\ov a_{n+1},\cdots, \ov a_{k+i+1})\\
&+& \cdots\\
&+& (-1)^{|\ov a_1|(\sum_{l=1}^{n+1}|\ov a_l|-|\ov a_1|)+\cdots+|\ov a_{n}|(\sum_{l=1}^{n+1}|\ov a_l|-|\ov a_{n}|)}
(\ov a_n,\cdots \ov a_{k+i},\ov a_{k+i-1},\cdots, \ov a_k,\ov a_{k-1},\cdots, \ov a_1,\ov a_{n+1}).
\end{eqnarray*}
Then the first type of summands in
$b'N(\ov a_{n+1},\cdots,\ov a_1)$ are
\begin{subequations}\label{1st_left}
\begin{eqnarray}
&& (-1)^{\varepsilon_k} (\ov a_{n+1},\ov a_n,\ov a_{n-1},\cdots,\ov a_{k+i},\ov m_i(\ov a_{k+i-1},\cdots ,\ov a_k), \ov a_{k-1},\cdots, \ov a_1)\\
&+& (-1)^{|\ov a_1|(\sum_{l=1}^{n+1}|\ov a_l|-|\ov a_1|)}(-1)^{\varepsilon_k-|\ov a_1|}
(\ov a_1,\ov a_{n+1}, \cdots, \ov a_{k+i},\ov m_i(\ov a_{k+i-1},\cdots, \ov a_k),\ov a_{k-1},\cdots, \ov a_2) \\
&+& \cdots\nonumber\\
&+& (-1)^{|\ov a_1|(\sum_{l=1}^{n+1}|\ov a_l|-|\ov a_1|)+\cdots+|\ov a_{k-2}|(\sum_{l=1}^{n+1}|\ov a_l|-|\ov a_{k-2}|)}
(-1)^{\varepsilon_k-|\ov a_1|-\cdots-|\ov a_{k-2}|} \nonumber\\
&&\quad\quad (\ov a_{k-2},\cdots,\ov a_{1}, \ov a_{n+1},
\ov a_{n},\cdots, \ov a_{k+i},\ov m_i(\ov a_{k+i-1},\cdots, \ov a_k),\ov a_{k-1}) \\
&+& (-1)^{|\ov a_1|(\sum_{l=1}^{n+1}|\ov a_l|-|\ov a_1|)+\cdots+|\ov a_{k-1}|(\sum_{l=1}^{n+1}|\ov a_l|-|\ov a_{k-1}|)}
(-1)^{\varepsilon_k-|\ov a_1|-\cdots-|\ov a_{k-1}|}\nonumber \\
&&\quad\quad (\ov a_{k-1},\cdots,\ov a_{1}, \ov a_{n+1},\ov a_{n},\cdots, \ov a_{k+i},\ov m_i(\ov a_{k+i-1},\cdots, \ov a_k))\\
&+& (-1)^{|\ov a_1|(\sum_{l=1}^{n+1}|\ov a_l|-|\ov a_1|)+\cdots+|\ov a_{k+i-1}|(\sum_{l=1}^{n+1}|\ov a_l|-|\ov a_{k+i-1}|)}
(-1)^{\varepsilon_k+|\ov a_{n+1}|+\cdots+|\ov a_{k+i}|} \nonumber\\
&&\quad\quad (\ov m_i(\ov a_{k+i-1},\cdots, \ov a_{k}),\ov a_{k-1},\cdots, \ov a_1,\ov a_{n+1},\cdots, \ov a_{k+i})
\label{1st_left_e}\\
&+& (-1)^{|\ov a_1|(\sum_{l=1}^{n+1}|\ov a_l|-|\ov a_1|)+\cdots+|\ov a_{k+i}|(\sum_{l=1}^{n+1}|\ov a_l|-|\ov a_{k+i}|)}
(-1)^{\varepsilon_k+|\ov a_{n+1}|+\cdots+|\ov a_{k+i+1}|}\nonumber \\
&&\quad\quad (\ov a_{k+i},\ov m_i(\ov a_{k+i-1},\cdots, \ov a_k),\ov a_{k-1},\cdots, \ov a_1,\ov a_{n+1},\cdots, \ov a_{k+i+1}) \\
&+& \cdots\nonumber \\
&+& (-1)^{|\ov a_1|(\sum_{l=1}^{n+1}|\ov a_l|-|\ov a_1|)+\cdots+|\ov a_{n}|(\sum_{l=1}^{n+1}|\ov a_l|-|\ov a_{n}|)}
(-1)^{\varepsilon_k+|\ov a_{n+1}|} \nonumber\\
&&\quad\quad (\ov a_n,\cdots, \ov a_{k+i}, \ov m_i(\ov a_{k+i-1},\cdots, \ov a_k),\ov a_{k-1},\cdots, \ov a_1,\ov a_{n+1}),
\end{eqnarray}
\end{subequations}
where $\varepsilon_k=|\ov a_{k-1}|+\cdots+|\ov a_1|$.
In $Nb=N(b'+b'')$, there is no contribution for such terms from $b''$,
and $b'(\ov a_{n+1},\cdots, \ov a_1)$ is equal to
$(-1)^{\varepsilon_k}(\ov a_{n+1},\cdots,\ov a_{k+i}, \ov m_i(\ov a_{k+i-1},\cdots,\ov a_k),\ov a_{k-1},\cdots, \ov a_1)$.
The action of $N$ on the latter is in fact the $N_{(n+1)-i}$-action, which is equal to
\begin{subequations}\label{1st_right}
\begin{eqnarray}
&& (-1)^{\varepsilon_k} (\ov a_{n+1},\ov a_n,\ov a_{n-1},\cdots,\ov a_{k+i},\ov m_i(\ov a_{k+i-1},\cdots, \ov a_k), \ov a_{k-1},\cdots, \ov a_1)\\
&+& (-1)^{|\ov a_1|(\sum_{l=1}^{n+1}|\ov a_l|-|\ov a_1|)+|\ov a_1|}
(-1)^{\varepsilon_k}(\ov a_1,\ov a_{n+1}, \cdots, \ov a_{k+i},\ov m_i(\ov a_{k+i-1},\cdots, \ov a_k),\ov a_{k-1},\cdots, \ov a_2)\\
&+& \cdots\nonumber\\
&+& (-1)^{|\ov a_1|(\sum_{l=1}^{n+1}|\ov a_l|-|\ov a_1|)+\cdots+
|\ov a_{k-2}|(\sum_{l=1}^{n+1}|\ov a_l|-|\ov a_{k-2}|)+|\ov a_1|+\cdots+|\ov a_{k-2}|}(-1)^{\varepsilon_k}\nonumber\\
&&\quad\quad
(\ov a_{k-2},\cdots,\ov a_{1}, \ov a_{n+1},\ov a_{n},\cdots, \ov a_{k+i},
\ov m_i(\ov a_{k+i-1},\cdots, \ov a_k),\ov a_{k-1}) \\
&+& (-1)^{|\ov a_1|(\sum_{l=1}^{n+1}|\ov a_l|-|\ov a_1|)+\cdots+
|\ov a_{k-1}|(\sum_{l=1}^{n+1}|\ov a_l|-|\ov a_{k-1}|)+|\ov a_1|+\cdots+|\ov a_{k-1}|}(-1)^{\varepsilon_k}\nonumber\\
&&\quad\quad
(\ov a_{k-1},\cdots,\ov a_{1}, \ov a_{n+1},\ov a_{n},\cdots, \ov a_{k+i},\ov m_i(\ov a_{k+i-1},\cdots, \ov a_k))\\
&+& (-1)^{|\ov a_1|(\sum_{l=1}^{n+1}|\ov a_l|-|\ov a_1|)+
\cdots+|\ov a_{k-1}|(\sum_{l=1}^{n+1}|\ov a_l|-|\ov a_{k-1}|)+|\ov a_1|+\cdots+|\ov a_{k-1}|}(-1)^{\varepsilon_k}\nonumber\\
&& \quad\quad
(-1)^{(|\ov a_{k+i-1}|+\cdots+|\ov a_k|+1)(|\ov a_{k-1}|+\cdots+|\ov a_1|+|\ov a_{n+1}|+\cdots+|\ov a_{k+i}|)}\nonumber\\
&&\quad\quad(\ov m_i(\ov a_{k+i-1},\cdots, \ov a_{k}),\ov a_{k-1},\cdots, \ov a_1,\ov a_{n+1},\cdots, \ov a_{k+i})
\label{1st_right_e}\\
&+& (-1)^{|\ov a_1|(\sum_{l=1}^{n+1}|\ov a_l|-|\ov a_1|)+\cdots+
|\ov a_{k+i}|(\sum_{l=1}^{n+1}|\ov a_l|-|\ov a_{k+i}|)+|\ov a_{n+1}|+\cdots+|\ov a_{k+i+1}|}(-1)^{\varepsilon_k}\nonumber\\
&&\quad\quad
(\ov a_{k+i},\ov m_i(\ov a_{k+i-1},\cdots, \ov a_k),\ov a_{k-1},\cdots, \ov a_1,\ov a_{n+1},\cdots, \ov a_{k+i+1}) \\
&+& \cdots\nonumber\\
&+& (-1)^{|\ov a_1|(\sum_{l=1}^{n+1}|\ov a_l|-|\ov a_1|)+\cdots+
|\ov a_{n}|(\sum_{l=1}^{n+1}|\ov a_l|-|\ov a_{n}|)+|\ov a_{n+1}|}(-1)^{\varepsilon_k}\nonumber\\
&&\quad\quad
(\ov a_n,\cdots, \ov a_{k+i}, \ov m_i(\ov a_{k+i-1},\cdots, \ov a_k),\ov a_{k-1},\cdots, \ov a_1,\ov a_{n+1}).
\end{eqnarray}
\end{subequations}
From the above two expressions
we see that \eqref{1st_left}=\eqref{1st_right} term by term, where, in particular, \eqref{1st_left_e}=\eqref{1st_right_e}
due to the fact that $$\Big(\sum_{l=1}^i |\ov a_{k+l-1}|\Big)^2-\sum_{l=1}^i |\ov a_{k+l-1}|^2$$ is even.
In other words, we have $Nb'=\mbox{first types of summands in}\,b'N$.

Now in $b'N(\ov a_{n+1},\cdots, \ov a_1)$, the rest summands (second type of summands) contain
components $$\ov m_{i+j+1}(\ov a_i,\cdots,\ov a_1,\ov a_{n+1},\cdots,\ov a_{n-j-1})$$ with indices
not in decreasing order, for some fixed $i\geq 1$ and $j\geq 0$. More precisely, the second type of summands in
$b'N(\ov a_{n+1},\ov a_n,\cdots,\ov a_{n-j+2},\ov a_{n-j+1},\ov a_{n-j},\cdots,\ov a_{i+1},\ov a_{i},\cdots,\ov a_1)$
are
\begin{eqnarray}\label{2nd_left}
&&(-1)^{|\ov a_1|(\sum_{l=1}^{n+1}|\ov a_l|-|\ov a_1|)+\cdots+
|\ov a_{i}|(\sum_{l=1}^{n+1}|\ov a_l|-|\ov a_{i}|)}(-1)^{|\ov a_{n-j}|+\cdots+|\ov a_{i+1}|}\nonumber\\
&&\quad\quad
(\ov m_{i+j+1}(\ov a_{i},\cdots,\ov a_1,\ov a_{n+1},\ov a_n,
\cdots,\ov a_{n-j+2},\ov a_{n-j+1}),\ov a_{n-j},\cdots,\ov a_{i+1})\nonumber\\
&+&(-1)^{|\ov a_1|(\sum_{l=1}^{n+1}|\ov a_l|-|\ov a_1|)
+\cdots+|\ov a_{i+1}|(\sum_{l=1}^{n+1}|\ov a_l|-|\ov a_{i+1}|)}(-1)^{|\ov a_{n-j}|+\cdots+|\ov a_{i+2}|}\nonumber\\
&&\quad\quad
(\ov a_{i+1},\ov m_{i+j+1}(\ov a_{i},\cdots,\ov a_1,\ov a_{n+1},\ov a_n,\cdots,
\ov a_{n-j+2},\ov a_{n-j+1}),\ov a_{n-j},\cdots,\ov a_{i+2})\nonumber\\
&+&\cdots\nonumber\\
&+&(-1)^{|\ov a_1|(\sum_{l=1}^{n+1}|\ov a_l|-|\ov a_1|)+\cdots
+|\ov a_{n-j-1}|(\sum_{l=1}^{n+1}|\ov a_l|-|\ov a_{n-j-1}|)}(-1)^{|\ov a_{n-j}|}\nonumber\\
&&\quad\quad
(\ov a_{n-j-1},\cdots,\ov a_{i+1},\ov m_{i+j+1}(\ov a_{i},\cdots,
\ov a_1,\ov a_{n+1},\ov a_n,\cdots,\ov a_{n-j+2},\ov a_{n-j+1}),\ov a_{n-j})\nonumber\\
&+&(-1)^{|\ov a_1|(\sum_{l=1}^{n+1}|\ov a_l|-|\ov a_1|)+\cdots+|\ov a_{n-j}|(\sum_{l=1}^{n+1}|\ov a_l|-|\ov a_{n-j}|)}\nonumber\\
&&\quad\quad
(\ov a_{n-j},\cdots,\ov a_{i+1},\ov m_{i+j+1}(\ov a_{i},\cdots,\ov a_1,\ov a_{n+1},\ov a_n,\cdots,\ov a_{n-j+2},\ov a_{n-j+1})).
\end{eqnarray}
On the other hand, in $Nb=N(b'+b'')$, $Nb'$ does not contribute to this type of terms.
$b''(a_{n+1},\cdots, a_1)$ has one term
$$
(-1)^{\nu_{ij}}(\ov m_{i+j+1}(\ov a_i,\cdots,\ov a_1,\ov a_{n+1},\cdots,\ov a_{n-j+1}),\ov a_{n-j},\cdots,\ov a_{i+1}),
$$
where
\begin{eqnarray*}
\nu_{ij}&=&(|\ov a_1|+\cdots+|\ov a_i|)(|\ov a_{i+1}|+\cdots+|\ov a_{n+1}|)+|\ov a_{n-j}|+\cdots+|\ov a_{i+1}|\\
&\equiv&|\ov a_1|(\sum_{l=1}^{n+1}|\ov a_l|-|\ov a_1|)+\cdots+|\ov a_i|(\sum_{l=1}^{n+1}|\ov a_l|-|\ov a_i|)
+|\ov a_{n-j}|+\cdots+|\ov a_{i+1}|\;({\textrm{mod\,\,2}}),
\end{eqnarray*}
again by the fact that $(\sum_{l=1}^i |\ov a_{l}|)^2-\sum_{l=1}^i |\ov a_{l}|^2$ is even.
Now applying $N$ to $b''(\ov a_{n+1},\cdots, \ov a_1)$ gives
\begin{eqnarray}\label{2nd_right}
&&(-1)^{|\ov a_1|(\sum_{l=1}^{n+1}|\ov a_l|-|\ov a_1|)+\cdots
+|\ov a_i|(\sum_{l=1}^{n+1}|\ov a_l|-|\ov a_i|)+|\ov a_{n-j}|+\cdots+|\ov a_{i+1}|}\nonumber\\
&&\quad\quad
(\ov m_{i+j+1}(\ov a_i,\cdots,\ov a_1,\ov a_{n+1},\cdots,\ov a_{n-j+1}),\ov a_{n-j},\cdots,\ov a_{i+1})\nonumber\\
&+&(-1)^{|\ov a_1|(\sum_{l=1}^{n+1}|\ov a_l|-|\ov a_1|)
+\cdots+|\ov a_i|(\sum_{l=1}^{n+1}|\ov a_l|-|\ov a_i|)+|\ov a_{n-j}|+\cdots
+|\ov a_{i+1}|+|\ov a_{i+1}|(\sum_{l=1}^{n+1}|\ov a_l|-|\ov a_{i+1}|+1)}\nonumber\\
&&\quad\quad
(\ov a_{i+1},\ov m_{i+j+1}(\ov a_i,\cdots,\ov a_1,\ov a_{n+1},\cdots,\ov a_{n-j+1}),\ov a_{n-j},\cdots,\ov a_{i+2})\nonumber\\
&+&\cdots\nonumber\\
&+&(-1)^{|\ov a_1|(\sum_{l=1}^{n+1}|\ov a_l|-|\ov a_1|)
+\cdots+|\ov a_i|(\sum_{l=1}^{n+1}|\ov a_l|-|\ov a_i|)+|\ov a_{n-j}|+\cdots+|\ov a_{i+1}|}\nonumber\\
&&\quad\quad
(-1)^{|\ov a_{i+1}|(\sum_{l=1}^{n+1}|\ov a_l|-|\ov a_{i+1}|+1)+\cdots+|\ov a_{n-j}|
(\sum_{l=1}^{n+1}|\ov a_l|-|\ov a_{n-j}|+1)}\nonumber\\
&&\quad\quad
(\ov a_{n-j},\cdots,\ov a_{i+1},\ov m_{i+j+1}(\ov a_i,\cdots,\ov a_1,\ov a_{n+1},\cdots,\ov a_{n-j+1})).
\end{eqnarray}
One sees \eqref{2nd_left}=\eqref{2nd_right} term by term,
which means $Nb''=\mbox{the second type of summands in}\; b'N$.

Combining the above two cases, we have $b'N=N(b'+b'')=Nb$, which completes the proof.
\end{proof}

\begin{proposition}$b(1-T)=(1-T)b'$.
\end{proposition}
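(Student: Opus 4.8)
The plan is to reduce the square to a single operator identity and then verify it on generators, in the same spirit as the previous proposition. Writing $b=b'+b''$ and applying $b(1-T)=(1-T)b'$ to an element of $\mathrm{CH}_n(\mathcal A)$, the two occurrences of $b'$ cancel and the identity collapses to
\[
b''=b\circ T-T\circ b',
\]
where on the source $\mathrm{CH}_n(\mathcal A)$ the operator $T$ is just the single cyclic rotation $t_n$, while on the target of $b'$ it is the full $\sum_m t_m$. Thus it suffices to evaluate $bT$ and $Tb'$ on a homogeneous generator $(\ov a_{n+1},\cdots,\ov a_1)$ and show that their difference is $b''(\ov a_{n+1},\cdots,\ov a_1)$.

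To do so I would first expand $Tb'(\ov a_{n+1},\cdots,\ov a_1)$: apply $b'$, which inserts one operator $\ov m_i$ on a consecutive, non-wrapping block $(\ov a_{k+i-1},\cdots,\ov a_k)$, and then rotate each resulting tuple by the appropriate $t_m$. In parallel I would expand $bT(\ov a_{n+1},\cdots,\ov a_1)=(b'+b'')\bigl(t_n(\ov a_{n+1},\cdots,\ov a_1)\bigr)$, where $t_n(\ov a_{n+1},\cdots,\ov a_1)=(-1)^{|\ov a_1|(\sum_l|\ov a_l|-|\ov a_1|)}(\ov a_1,\ov a_{n+1},\cdots,\ov a_2)$. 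The key is to sort the terms of $b(t_n(\cdots))$ by whether the block carrying $\ov m$ meets the seam between $\ov a_1$ and $\ov a_{n+1}$ created by the rotation: a block that is consecutive in the rotated tuple but does not cross this seam is the $t_m$-image of a non-wrapping block of the original, so after reindexing the cyclic summation these terms reproduce $Tb'(\ov a_{n+1},\cdots,\ov a_1)$ and hence cancel in the difference $bT-Tb'$; the blocks that do cross the seam are precisely the wrap-around insertions $\ov m_{i+j+1}(\ov a_i,\cdots,\ov a_1,\ov a_{n+1},\cdots,\ov a_{n-j+1})$ that constitute $b''(\ov a_{n+1},\cdots,\ov a_1)$. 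This matching yields $bT-Tb'=b''$ and hence the commutativity of the square.

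The main obstacle is, as before, the sign bookkeeping rather than the combinatorics. One must reconcile the cyclic sign $(-1)^{|\ov a_1|(\sum_l|\ov a_l|-|\ov a_1|)}$ produced by $t_n$ with the internal signs $\varepsilon_k$ of $b'$ and $\nu_{ij}$ of $b''$, and verify that the reindexing shift relating a seam-crossing block of the rotated tuple to a wrap-around block of the original carries exactly the sign recorded in the definition of $b''$. As in the proof of $b'N=Nb$, this repeatedly uses the parity fact that $\bigl(\sum_l|\ov a_l|\bigr)^2-\sum_l|\ov a_l|^2$ is even. Because only the single rotation $t_n$ (and not the full sum $N$) intervenes here, the matching is term-by-term and does not telescope, so the computation is lighter than that of the first proposition; the entire content lies in checking that every sign agrees.
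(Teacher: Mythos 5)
Your proposal is correct and follows essentially the same route as the paper: both arguments expand $b(1-T)$ and $(1-T)b'$ on a generator and match the insertion terms one by one, with the seam-crossing (wrap-around) insertions of $b\circ T$ accounting for $b''$ and the remaining ones reproducing $T\circ b'$. Your preliminary reduction to $b''=bT-Tb'$ is only a cosmetic rearrangement, and the sign verification you defer to bookkeeping is exactly the content the paper's appendix writes out explicitly.
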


\begin{proof}
Again, for $(\ov a_{n+1},\cdots, \ov a_1)$,
\begin{subequations}
\begin{eqnarray}
&&b(1-T)(\ov a_{n+1},\cdots,\ov a_1)\nonumber\\
&=&(b'+b'')((\ov a_{n+1},\cdots,\ov a_1)-(-1)^{|\ov a_1|(\sum_{l=1}^{n+1}|\ov a_l|-|\ov a_1|)}
(\ov a_1,\ov a_{n+1},\cdots,\ov a_2))\nonumber\\
&=&b'((\ov a_{n+1},\cdots,\ov a_1))+b''((\ov a_{n+1},\cdots,\ov a_1))-
(-1)^{|\ov a_1|(\sum_{l=1}^{n+1}|\ov a_l|-|\ov a_1|)}b'((\ov a_1,\ov a_{n+1},\cdots,\ov a_2))\nonumber\\
&&-(-1)^{|\ov a_1|(\sum_{l=1}^{n+1}|\ov a_l|-|\ov a_1|)}b''((\ov a_1,\ov a_{n+1},\cdots,\ov a_2))\nonumber\\
&=&
\sum_{k=1}^{n+1}\sum_{i=1}^{n-k+2}(-1)^{\varepsilon_k}(\ov a_{n+1},\cdots,\ov a_{k+i},
\ov m_i(\ov a_{k+i-1},\cdots,\ov a_k),\ov a_{k-1},\cdots,\ov a_1)\;\;\;\;\;\label{1a}\\
&+&\sum_{j=0}^{n-1}\sum_{i=1}^{n-j}(-1)^{\nu_{ij}}(\ov m_{i+j+1}(\ov a_i,\cdots,\ov a_1,\ov a_{n+1},
\cdots,\ov a_{n-j+1}),\ov a_{n-j},\cdots, \ov a_{i+1})\nonumber\\
&-&\sum_{k=2}^{n+1}\sum_{i=1}^{n-k+2}(-1)^{|\ov a_1|(\sum_{l=1}^{n+1}|\ov a_l|-|\ov a_1|)}
(-1)^{\varepsilon_k-|\ov a_1|}(\ov a_1,\ov a_{n+1},
\cdots,\ov m_i(\ov a_{k+i-1},\cdots,\ov a_k),\ov a_{k-1},\cdots,\ov a_2)\;\;\;\;\;\;\label{1b}\\
&-&(-1)^{|\ov a_1|(\sum_{l=1}^{n+1}|\ov a_l|-|\ov a_1|)}\ov m_{n+1}(\ov a_1,\ov a_{n+1},\cdots,\ov a_2)\nonumber\\
&-&(-1)^{|\ov a_1|(\sum_{l=1}^{n+1}|\ov a_l|-|\ov a_1|)+|\ov a_2|}
(\ov m_{n}(\ov a_1,\ov a_{n+1},\cdots,\ov a_3),\ov a_2)\nonumber\\
&-&\cdots\nonumber\\
&-&(-1)^{|\ov a_1|(\sum_{l=1}^{n+1}|\ov a_l|-|\ov a_1|)+|\ov a_2+\cdots+|\ov a_n|}
(\ov m_{2}(\ov a_1,\ov a_{n+1}),\ov a_n,\cdots,\ov a_2)\nonumber\\
&-&(-1)^{|\ov a_1|(\sum_{l=1}^{n+1}|\ov a_l|-|\ov a_1|)
+|\ov a_2|+\cdots+|\ov a_{n+1}|}(\ov m_{1}(\ov a_1),\ov a_{n+1},\cdots,\ov a_2)\label{1c}\\
&-&\sum_{j=0}^{n-2}\sum_{i=2}^{n-j}(-1)^{|\ov a_1|(\sum_{l=1}^{n+1}|\ov a_l|-|\ov a_1|)}
(-1)^{(|\ov a_2|+\cdots+|\ov a_i|)(|\ov a_{i+1}|+\cdots+|\ov a_{n+1}|+|\ov a_1|)+|\ov a_{n-j}|+\cdots+|\ov a_{i+1}|}\nonumber\\
&&\quad\quad (\ov m_{i+j+1}(\ov a_i,\cdots,\ov a_2,\ov a_1,\ov a_{n+1},\cdots,\ov a_{n-j+1}),\ov a_{n-j},\cdots,\ov a_{i+1})\nonumber\\
&-&(-1)^{|\ov a_1|(\sum_{l=1}^{n+1}|\ov a_l|-|\ov a_1|)+|\ov a_2|(\sum_{l=1}^{n+1}|\ov a_l|-|\ov a_2|)
+|\ov a_3|+\cdots+|\ov a_{n+1}|}(\ov m_2(\ov a_2,\ov a_1),\ov a_{n+1},\cdots,\ov a_3)\label{1d}\\
&-&\cdots\nonumber\\
&-&(-1)^{|\ov a_1|(\sum_{l=1}^{n+1}|\ov a_l|-|\ov a_1|)+(|\ov a_2|+\cdots+|\ov a_n|)(|\ov a_{n+1}|+|\ov a_1|)
+|\ov a_{n+1}|}(\ov m_n(\ov a_n,\cdots,\ov a_1),\ov a_{n+1})\label{1e}\\
&-&(-1)^{|\ov a_1|(\sum_{l=1}^{n+1}|\ov a_l|-|\ov a_1|)+(|\ov a_2|+\cdots
+|\ov a_{n+1}|)(|\ov a_1|)}(\ov m_{n+1}(\ov a_{n+1,}\cdots,\ov a_1).\label{1f}
\end{eqnarray}
\end{subequations}
Note that, the terms in $b(1-T)$ without labels cancel each other. Now,
\begin{subequations}
\begin{eqnarray}
&& (1-T)b'(\ov a_{n+1},\ov a_n,\cdots,\ov a_1)\nonumber\\
&=& (1-T)(\sum_{k=1}^{n+1}\sum_{i=1}^{n-k+2}
(-1)^{\varepsilon_k}(\ov a_{n+1},\cdots,\ov a_{k+i},\ov m_i(\ov a_{k+i-1},\cdots,\ov a_k),\ov a_{k-1},\cdots,\ov a_1))\nonumber\\
&=& (1-t_n-\cdots t_1-t_0)(\sum_{k=1}^{n+1}\sum_{i=1}^{n-k+2}
(-1)^{\varepsilon_k}(\ov a_{n+1},\cdots,\ov a_{k+i},\ov m_i(\ov a_{k+i-1},\cdots,\ov a_k),\ov a_{k-1},\cdots,\ov a_1))\nonumber\\
&=& \sum_{k=1}^{n+1}\sum_{i=1}^{n-k+2}
(-1)^{\varepsilon_k}(\ov a_{n+1},\cdots,\ov a_{k+i},\ov m_i(\ov a_{k+i-1},\cdots,\ov a_k),\ov a_{k-1},\cdots,\ov a_1)\label{2a}\\
&-&\sum_{k=2}^{n+1}(-1)^{\varepsilon_k}
(-1)^{|\ov a_1|(\sum_{l=1}^{n+1}|\ov a_l|-|\ov a_1|)
-|\ov a_1|}(\ov a_1,\ov a_{n+1},\cdots,\ov a_{k+1},\ov m_1(\ov a_k),\ov a_{k-1},\cdots,\ov a_2)\label{2b1}\\
&-&(-1)^{|\ov a_1|(\sum_{l=1}^{n+1}|\ov a_l|-|\ov a_1|)
+|\ov a_2|+\cdots+|\ov a_{n+1}|}(\ov m_1(\ov a_1),\ov a_{n+1},\cdots,\ov a_2)\label{2c}\\
&-&\sum_{k=2}^n(-1)^{\varepsilon_k}(-1)^{|\ov a_1|(\sum_{l=1}^{n+1}|\ov a_l|-|\ov a_1|)-|\ov a_1|}
(\ov a_1,\ov a_{n+1},\cdots,\ov a_{k+2},\ov m_2(\ov a_{k+1},\ov a_k),\ov a_{k-1},\cdots,\ov a_2)\label{2b2}\\
&-&(-1)^{(|\ov a_2|+|\ov a_1|)(|\ov a_3|+\cdots+|\ov a_{n+1}|)
+|\ov a_3|+\cdots+|\ov a_{n+1}|}(\ov m_2(\ov a_2,\ov a_1),\ov a_{n+1},\cdots,\ov a_3)\label{2d}\\
&-&(-1)^{\varepsilon_2}(-1)^{|\ov a_1|(|\ov a_2|+\cdots
+|\ov a_{n+1}|)+|\ov a_1|}(\ov a_1,\ov m_n(\ov a_{n+1},\cdots,\ov a_2))\label{2b3}\\
&-&(-1)^{(|\ov a_n|+\cdots+|\ov a_1|)|\ov a_{n+1}|+|\ov a_{n+1}|}(\ov m_n(\ov a_n,\cdots,\ov a_1),\ov a_{n+1})\label{2e}\\
&-& \ov m_{n+1}(\ov a_{n+1},\cdots,\ov a_1)\label{2f}.
\end{eqnarray}
\end{subequations}
Now, one can find that  (\ref{1a})=(\ref{2a}),
(\ref{1b})=(\ref{2b1})+(\ref{2b2})+(\ref{2b3}), (\ref{1c})=(\ref{2c}), (\ref{1d})=(\ref{2d}),
(\ref{1e})=(\ref{2e}), and (\ref{1f})=(\ref{2f}), which means
$b(1-T)=(1-T)b'$. This completes the proof.
\end{proof}

\end{appendix}



\begin{thebibliography}{100}
\addtolength{\itemsep}{-1.2ex}

\bibitem{Ab09}M. Abouzaid, {\it A topological model for the Fukaya categories of plumbings},
J. Differential Geom. {\bf 87} (2011), no. 1, 1--80.


\bibitem{BCER}Y. Berest, X. Chen, F. Eshmatov and A. Ramadoss,
 \textit{Noncommutative Poisson structures, derived representation schemes and Calabi-Yau algebras},
{Contemp. Math.} {\bf 583} (2012), 219--246.

\bibitem{BKR}Y. Berest, G. Khachatryana and A. Ramadoss,
\textit{Derived representation schemes and cyclic homology},
Adv. Math. {\bf 245} (2013) 625--689.

\bibitem{BL} R. Bocklandt and L. Le Bruyn, \textit{Necklace Lie algebras and noncommutative symplectic geometry},
Math. Z. \textbf{240} (2002), 141--167.

\bibitem{BS}T. Bridgeland and D. Stern, \textit{Helices on del Pezzo surfaces and tilting Calabi-Yau algebras},
Adv. Math. {\bf 224} (2010) 1672-1716.

\bibitem{CS1}M. Chas and D. Sullivan, {\it String topology}, arXiv:math/9911159.

\bibitem{CS2}M. Chas and D. Sullivan, {\it Closed string operators in topology leading to Lie bialgebras
and higher string algebra}, in {\it The legacy of Niels Henrik Abel}, 771--784, Springer, Berlin, 2004.



\bibitem{Cos07}K. Costello, \textit{Topological conformal field theories and Calabi-Yau categories},
Adv. Math. \textbf{210} (2007), no. 1, 165--214.



\bibitem{CB}W. Crawley-Boevey, \textit{Poisson structures on moduli spaces of
representations}, J. Algebra \textbf{325} (2011), 205--215.



\bibitem{CBEG}W. Crawley-Boevey, P. Etingof and V. Ginzburg,
\textit{Noncommutative geometry and quiver algebras}, Adv. Math. \textbf{209} (2007), 274--336.

\bibitem{Hasegawa}K. Lef\`evre-Hasegawa, \textit{Sur les A$_\infty$-cat\'egories.} Available at:
http://webusers.imj-prg.fr/$\sim$bernhard.keller/lefevre/TheseFinale/tel-00007761.pdf.

\bibitem{Fuk93}K. Fukaya, \textit{Morse homotopy, A$_\infty$-category, and Floer homologies}.
Proceedings of GARC Workshop on Geometry and Topology '93 (Seoul, 1993),
H. J. Kim, ed., Lecture Notes, no. {\bf 18}, Seoul Nat. Univ., Seoul, 1993, 1--102.

\bibitem{Fukaya}K. Fukaya, \textit{Floer homology and mirror symmetry}. II.
Minimal surfaces, geometric analysis and symplectic geometry
(Baltimore, MD, 1999), 31--127, Adv. Stud. Pure Math. {\bf 34}, Math. Soc. Japan, Tokyo, 2002.

\bibitem{Fuk10}K. Fukaya, \textit{Cyclic symmetry and adic convergence in Lagrangian Floer theory}.
Kyoto J. Math. Volume \textbf{50}, Number 3 (2010), 521--590.

\bibitem{FOOO}K. Fukaya, Y.-G. Oh, H. Ohta and K. Ono,
\textit{Lagrangian intersection Floer theory: anomaly and obstruction}. Part I and II.
AMS/IP Studies in Advanced Mathematics {\bf 46}, 2009.

\bibitem{FSS}K. Fukaya, P. Seidel and I. Smith,
\textit{Exact Lagrangian submanifolds in simply-connected cotangent bundles}. Invent. Math. \textbf{172} (2008), no. 1, 1--27.


\bibitem{FSS2}K. Fukaya, P. Seidel and I. Smith, {\it The symplectic geometry of
cotangent bundles from a categorical viewpoint}.
Homological mirror symmetry, 1--26, Lecture Notes in Phys. \textbf{757}, Springer, Berlin, 2009.

\bibitem{GJ}E. Getzler and J.D.S. Jones,
\textit{A$_\infty$-algebras and the cyclic bar complex}, Illinois J. Math. \textbf{34} (1989), 256--283.


\bibitem{Gin} V. Ginzburg, \textit{Noncommutative symplectic geometry,
quiver varieties and operads}, Math. Res. Lett. \textbf{8} (2001), 377--400.

\bibitem{Goldman}W. Goldman,
\textit{Invariant functions on Lie groups and Hamiltonian flows of surface group representations.}
Invent. Math. {\bf 85} (1986), no. 2, 263--302.

\bibitem{Jones}J.D.S. Jones, \textit{Cyclic homology and equivariant homology},
Invent. Math. \textbf{87} (1987), 403--423.


\bibitem{Keller2}B. Keller,
\textit{Invariance and localization for cyclic homology of DG algebras}.
J. Pure Appl. Algebra {\bf 123} (1998), no. 1-3, 223--273.

\bibitem{Keller}B. Keller, \textit{On the cyclic homology of exact categories},
J. Pure Appl. Algebra  \textbf{136} (1999), 1--56.

\bibitem{Ko1} M. Kontsevich, \textit{Feynman diagrams and low-dimensional topology}, First European Congress of
Mathematics, 1992, Paris, Vol. II, Prog. in Math. \textbf{120}, Birkh\"auser 1994, 97--121.

\bibitem{Ko2}M. Kontsevich, \textit{Formal (non)commutative symplectic geometry}, The Gelfand
Mathematical Seminars 1990-1992, Birkh\"auser, Boston, 1993, 173--187.

\bibitem{HMS}M. Kontsevich, \textit{Homological algebra of Mirror Symmetry},
Proceedings of the International Congress of Mathematicians, Z\"urich 1994, vol. I, Birkh\"auser 1995, 120--139.

\bibitem{KR}M. Kontsevich and A. Rosenberg, \textit{Noncommutative smooth spaces}, The Gelfand
Mathematical Seminars 1996-1999, Birkh\"auser, Boston, 2000, 85--108.

\bibitem{KS06}M. Kontsevich and Y. Soibelman, \textit{Notes on A$_\infty$ algebras, A$_\infty$ categories and
non-commutative geometry}. \textit{Homological mirror symmetry},
153--219, Lecture Notes in Phys. {\bf 757}, Springer, Berlin, 2009.


\bibitem{Loday}J.-L. Loday, \textit{Cyclic homology}. Second edition.
Grundlehren der Mathematischen Wissenschaften,
\textbf{301}. Springer--Verlag, Berlin, 1998.



\bibitem{Nadler}D. Nadler, {\it Microlocal branes are constructible sheaves}. Selecta Math. (N.S.) \textbf{15}
(2009), no. 4, 563--619.

\bibitem{PS}M. Penkava, A. Schwarz,
\textit{$A_\infty$-Algebras and the Cohomology of Moduli Spaces.}
Lie groups and Lie algebras: E. B. Dynkin's Seminar, 91--107,
Amer. Math. Soc. Transl. Ser. 2, 169, Amer. Math. Soc., Providence, RI, 1995.


\bibitem{Quillen89}D. Quillen, \textit{Algebra cochains and cyclic cohomology},
Inst. Hautes Etudes Sci. Publ. Math. \textbf{68} (1989), 139--174.

\bibitem{Seidel}P. Seidel, \textit{Fukaya categories and Picard-Lefschetz theory},
Z\"urich Lectures in Advanced Mathematics. European Mathematical Society (EMS), Z\"urich, 2008.

\bibitem{Seidel2}P. Seidel, \textit{Suspending Lefschetz Fibrations, with an Application to Local Mirror Symmetry},
Commun. Math. Phys. {\bf 297}, 515--528 (2010).


\bibitem{VdB}M. Van den Bergh, \textit{Double Poisson algebras},
Trans. Amer. Math. Soc. \textbf{360} (2008), 5711--5769.


\end{thebibliography}
\end{document}